\newtheorem{theorem}{Theorem}
\newtheorem{lem}[theorem]{Lemma}
\newtheorem{definition}[theorem]{Definition}
\newtheorem{proposition}[theorem]{Proposition}
\newtheorem{prop}[theorem]{Proposition}
\newtheorem{conj}[theorem]{Conjecture}
\newtheorem{problem}[theorem]{Problem}
\newtheorem{cor}[theorem]{Corollary}
\theoremstyle{remark}\newtheorem{remark}[theorem]{Remark}
\newtheorem{example}[theorem]{Example}
\numberwithin{theorem}{section}
\numberwithin{equation}{section}
\newsavebox{\@brx}
\newcommand{\llangle}[1][]{\savebox{\@brx}{\(\m@th{#1\langle}\)}%
  \mathopen{\copy\@brx\kern-0.5\wd\@brx\usebox{\@brx}}}
\newcommand{\rrangle}[1][]{\savebox{\@brx}{\(\m@th{#1\rangle}\)}%
  \mathclose{\copy\@brx\kern-0.5\wd\@brx\usebox{\@brx}}}
\newcommand{\GL}{\operatorname{GL}}
\newcommand{\SL}{\operatorname{SL}}
\newcommand{\Stab}{\operatorname{Stab}} % (pointwise) stabilizer group
\newcommand{\quat}{\operatorname{M}_2} % quat. algebra
\newcommand{\trace}{\operatorname{Tr}} % trace
\newcommand{\quatzero}{\operatorname{M}_2^0}
\newcommand{\disc}{\operatorname{disc}}
\renewcommand{\mod}{\, \mathrm{mod}\,} % mod without weird spacing
\newcommand\rquot[2]{
  \mathchoice
  {% \displaystyle
    \text{\raise0.5ex\hbox{$#1$}\big/\lower0.5ex\hbox{$#2$}}%
  }
  {% \textstyle
    #1\,/\,#2
  }
  {% \scriptstyle
    #1\,/\,#2
  }
  {% \scriptscriptstyle
    #1\,/\,#2
  }
}
\newcommand\lquot[2]{
  \mathchoice
  {% \displaystyle
    \text{\lower0.5ex\hbox{$#1$}\big\backslash\raise0.5ex\hbox{$#2$}}%
  }
  {% \textstyle
    #1\,\backslash\,#2
  }
  {% \scriptstyle
    #1\,\backslash\,#2
  }
  {% \scriptscriptstyle
    #1\,\backslash\,#2
  }
}
\newcommand\lrquot[3]{
  \mathchoice
  {% \displaystyle
    \text{\lower0.5ex\hbox{$#1$}\big\backslash\raise0.5ex\hbox{$#2$}\big/
      \lower0.5ex\hbox{$#3$}}%
  }
  {% \textstyle
    #1\,\backslash\,#2\,/\,#3
  }
  {% \scriptstyle
    #1\,\backslash\,#2\,/\,#3
  }
  {% \scriptscriptstyle
    #1\,\backslash\,#2\,/\,#3
  }
}
\newcommand{\Bcal}{\mathcal{B}}
\newcommand{\Kcal}{\mathcal{K}}
\newcommand{\Lcal}{\mathcal{L}}
\newcommand{\bQ}{\mathbb{Q}}
\newcommand{\Q}{\mathbb{Q}}
\newcommand{\R}{\mathbb{R}}
\newcommand{\bZ}{\mathbb{Z}}
\newcommand{\Z}{\mathbb{Z}}
\newcommand{\set}[1]{\left\{#1\right\}}
\newcommand{\pa}[1]{\left(#1\right)}
\newcommand{\SmMat}[4]{
\big(\begin{smallmatrix}
  #1 & #2\\ #3 & #4
\end{smallmatrix}\big)
}
\newcommand{\pperp}{{\perp\kern -4.2pt \perp}}
\newcommand{\Notj}{j}
\newcommand{\B}{\mathbf{B}} 
\newcommand{\GrossLat}{\Lambda} 
\newcommand{\Id}{\operatorname{Id}} 
\newcommand{\id}{\mathrm{id}}
\newcommand{\content}{\operatorname{content}}
\title[Seifert surfaces and composition of binary quadratic forms]{Seifert surfaces in the four-ball and composition of binary quadratic forms}
\author{Menny Aka}
\author{Peter Feller}
\author{Alison Beth Miller}
\author{Andreas Wieser}
\address{Department of Mathematics, ETH Z\"urich, R\"amistrasse 101, 8092 Z\"urich, Switzerland}
\email{menny.akka@math.ethz.ch}
\address{Université de Neuchâtel, %Institut de mathématiques,
Rue Emile-Argand 11, 2000 Neuchâtel, Switzerland}
\email{peter.feller@unine.ch}
\address{Mathematical Reviews, 535 W William St, Ste 210, Ann Arbor, MI 48103, USA}
\email{alimil@umich.edu}
\address{Institute for Advanced Study, 1 Einstein Drive, Princeton, NJ 08540, USA}
\email{awieser@ias.edu}
\thanks{PF~was supported by the SNSF grant 181199. AW was funded by ERC grant HomDyn, ID~833423.}
\subjclass[2020]{57K10, 57K45, 11E16, 15A72}
\keywords{Bhargava Cube, Gauss composition, %integral binary quadratic forms,
Klein vectors, Seifert surfaces, Seifert form}
\begin{document}
\begin{abstract} 
We use composition of binary quadratic forms to systematically create pairs of Seifert surfaces that are non-isotopic in the four-ball. 
Our main topological result employs Gauss composition to classify the pairs of binary quadratic forms that arise as the Seifert forms of pairs of disjoint Seifert surfaces of genus one.
The main ingredient of the proof is number-theoretic and of independent interest.
It establishes a new connection between the Bhargava cube and the geometric approach to Gauss composition via planes in the space of two-by-two matrices. 
In particular, we obtain a geometric recipe that given any two binary quadratic forms finds a Bhargava cube that gives rise to their composition.
\end{abstract}
\maketitle

\section{Introduction}\label{sec:introduction}

In this article, we revisit Gauss composition of binary quadratic forms via planes in four-dimensional space motivated by applications which obtain robust constructions of Seifert surfaces that are non-isotopic in the four-ball.
We begin here by discussing these applications in \S\ref{sec:topological results} but note that the number-theoretic result in \S\ref{sec:numbertheoretic results} is of independent interest. For example, the latter leads to a curious variant of the mixing conjecture of Michel and Venkatesh \cite{MV-ICM}.
In fact, the number-theoretic parts of the paper (\S\ref{sec:numbertheoretic results}, \S\ref{sec:Intro:sympplanes} and \S\ref{sec:equi} in the introduction, and \S\ref{sec:notation}, \S\ref{sec:Kleinmap}, \S\ref{sec:Bhargava}, \S\ref{subsec:problemsnumertheory} in the body of the text) can be read completely independently of the low-dimensional topology part of the paper. The latter is written to be independent but makes reference to some notation and results of the number-theoretic parts.

\subsection{Seifert surfaces with the same boundary}\label{sec:topological results}
Recently, Hayden, Miller, Kim, Park, and Sundberg~\cite{HMKPS22} found a pair of genus one Seifert surfaces $F_1$ and $F_2$ for the same knot $K$ that, after pushing their interior into the four-ball $B^4$, are not topologically ambient isotopic in $B^4$. 
Here, recall that a Seifert surface for a knot $K$ is a compact oriented smooth surface in the $3$-sphere $S^3$ with boundary $K$.
This resolved a question by Livingston~\cite{Livingston82}, who asked whether such two Seifert surfaces can exist.
The example in \cite{HMKPS22} is such that the interiors are disjoint; we call pairs of genus 1 Seifert surfaces with the same boundary and disjoint interior \emph{disjoint genus one pairs} from here on.
In this paper, we put the recent example into a more general context by producing algebraic characterizations of when such pairs of Seifert surfaces with prescribed classical invariants (derived from the Seifert form) exist. 
The key result that makes all our topology results possible is a complete number-theoretic characterization of those pairs of Seifert forms that arise from disjoint genus one pairs of Seifert surfaces. 

It turns out that this characterization is in terms of Gauss composition for integral binary quadratic forms~\cite{gauss}, which we denote by $*$.
To state our characterization, we recall that the \emph{discriminant} of an \emph{integral binary quadratic form} (\emph{IBQF}) $ax^2+bxy+cy^2$ is $b^2-4ac$.
Furthermore, the Seifert form is a bilinear form associated with a Seifert surface. In the case of a genus one Seifert surface of a knot it amounts to an $\SL_2(\Z)$-equivalence class $[ax^2+bxy+cy^2]$ of an IBQF; compare~\eqref{eq:s_F}.
%change to "it amounts to an equivalence class $[ax^2 + bxy + cy^2]$ of an IBQF under the $\SL_2$-action given by change of basis"?

\begin{theorem}\label{thmintro:main3d} Let $s_1$ and $s_2$ be $\SL_2(\Z)$-equivalence classes of IBQFs.
The pair $s_1$ and $s_2$ arises as the pair of Seifert forms of a disjoint genus one pair of Seifert surfaces if and only if there exist $a,c\in\Z$ such that $s_1$ and $s_2$ have discriminant $1-4ac$ and
$[ax^2+xy+cy^2]^{2}*s_1=s_2$.
\end{theorem}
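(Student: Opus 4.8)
The plan is to prove both implications by translating the topology of a disjoint genus one pair into integral linear algebra and then recognizing the resulting data as a Bhargava cube, so that Bhargava's cube law yields the composition identity. To a disjoint genus one pair $(F_1,F_2)$ with common boundary $K$ I would associate the two Seifert matrices $V_1,V_2\in\quat(\Z)$ (each satisfying $V_i-V_i^{\mathrm{T}}=\SmMat{0}{1}{-1}{0}$ in a symplectic basis of $H_1(F_i)$) together with the integral linking matrix $L\in\quat(\Z)$ recording the linking numbers between pushed-off generators of $H_1(F_1)$ and those of $H_1(F_2)$. The class $s_i$ is the $\SL_2(\Z)$-class of the self-linking form $q_{V_i}(x,y)=V_i((x,y),(x,y))$, whose discriminant equals $-\det(V_i+V_i^{\mathrm{T}})$.

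For the \emph{forward implication} I would first check that $s_1$ and $s_2$ share a discriminant: since $\det(V_i+V_i^{\mathrm{T}})=\Delta_K(-1)$ depends only on $K$ and not on the chosen Seifert surface, both discriminants equal a common integer $D$, which is $\equiv 1\bmod 4$ and hence of the form $1-4ac$. The linking matrix $L$, normalized using the single-boundary-circle condition, then determines a form $g=ax^2+xy+cy^2$ of this same discriminant $D$, the odd middle coefficient $1$ being forced by the intersection/symplectic normalization. The decisive step is to assemble $(V_1,V_2,L)$ into a Bhargava cube whose three extracted binary quadratic forms are $g$, $g$, and $s_1*s_2^{-1}$; the repetition of $g$ should reflect the symmetry of the linking data coming from the two surfaces sharing a boundary. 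Bhargava's cube law then reads $g*g*(s_1*s_2^{-1})=\id$, which is exactly $[ax^2+xy+cy^2]^2*s_1=s_2$.

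For the \emph{converse}, the task is realization. Given $a,c\in\Z$ with $s_1,s_2$ of discriminant $1-4ac$ and $g^2*s_1=s_2$, I would use the number-theoretic main ingredient — the geometric recipe via planes in $\quat$ — to manufacture an explicit Bhargava cube, i.e.\ concrete integral matrices $V_1,V_2,L$ whose three associated forms are $g,g,s_1*s_2^{-1}$ and which satisfy all constraints required of genuine Seifert/linking data: the symplectic normalization of $V_i-V_i^{\mathrm{T}}$, the prescribed self-forms $[q_{V_i}]=s_i$, and integrality of $L$. Since $g^2*s_1=s_2$ guarantees that $g*g*(s_1*s_2^{-1})=\id$, such a cube exists, and the planes-in-$\quat$ model supplies it constructively. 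I would then invoke the standard realization of a prescribed Seifert matrix by an embedded genus one surface, upgraded so as to realize the pair $(V_1,V_2)$ with the prescribed linking $L$ by two surfaces whose interiors are disjoint, and finally verify that the resulting pair has the required Seifert forms.

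The main obstacle I expect is precisely the number-theoretic realization in the converse: turning the abstract class-group identity $g^2*s_1=s_2$ into an explicit cube with integer entries that simultaneously (i) extracts the three prescribed forms and (ii) meets the rigid symplectic and linking constraints that make it topologically realizable. This is the content of the geometric recipe connecting the Bhargava cube to planes in $\quat$, and getting the normalizations to line up — especially the forced middle coefficient $1$ of $g$ and the doubled appearance of $g$ — is where the real work lies. By contrast, the topological realization and the forward extraction of invariants should be comparatively routine once this dictionary is in place.
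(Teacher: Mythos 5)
Your overall strategy (reduce to integral linear algebra, then invoke the cube law) is in the right spirit, but both directions as written have genuine gaps. In the forward direction, the claim that the triple $(V_1,V_2,L)$ assembles into a single Bhargava cube whose three extracted forms are $g$, $g$, $s_1*s_2^{-1}$ is asserted, not established, and it is not even clear the data has the right shape: a cube is eight integers (two $2\times2$ layers), whereas you are carrying three $2\times2$ matrices. More importantly, this is not the mechanism by which the square of $g$ arises. The paper first shows (Lemma~\ref{lem:SFforclosedsurfaces}) that the Seifert form of the closed genus-two surface $\Sigma=F_1\cup\overline{F_2}$ is \emph{always} the standard form $U$ of \eqref{eq:BilFormU} --- so the block data $(V_1,V_2,L)$ itself carries no invariant information; the invariant is the symplectic plane $H_1(F_1;\Z)\subset(\Z^4,U)$. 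Theorem~\ref{thm:nonprimthm} is then applied \emph{twice}, once to this plane and once to its symplectic complement $H_1(F_2;\Z)$: each application produces a cube with three \emph{distinct} forms $q_L$, $q_{a_1}$, $q_{a_2}$ (the Klein vectors), and the square $[q_{a_2}]^2=[ax^2+xy+cy^2]^2$ appears only upon comparing the two applications, using that $a_2(L^\pperp)$ is the orientation-reversal of $a_2(L)$ (Proposition~\ref{prop:KleinForSymp}) and that symplecticity forces the middle coefficient to be $1$ (Lemma~\ref{lem:SympIf1}). A single cube with a repeated slicing form $g$ is not what the topology hands you.

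The converse has the more serious gap. You propose to ``invoke the standard realization of a prescribed Seifert matrix by an embedded genus one surface, upgraded so as to realize the pair $(V_1,V_2)$ with the prescribed linking $L$ by two surfaces whose interiors are disjoint.'' That upgrade is precisely the hard topological content and is not a standard fact; realizing two Seifert matrices together with a prescribed mutual linking matrix by disjoint surfaces with common boundary is a strong simultaneous-realization statement that you would need to prove. The paper avoids it entirely: it fixes the standard unknotted genus-two surface $\Sigma_{\rm std}$, observes that its Seifert form is $U$, and uses the change-of-coordinates principle for surfaces (Lemma~\ref{lem:realizationofL}, via \cite[Theorem~6.4]{FarbMargalit_12_APrimerOnMCG}) to realize \emph{any} symplectic plane of $H_1(\Sigma_{\rm std};\Z)$ as $H_1(F_K;\Z)$ for a separating simple closed curve $K$; the complementary subsurface then automatically realizes the symplectic complement. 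So the only thing to construct is a plane with the right Klein vectors, namely $a_2=A(ax^2+xy+cy^2)$ and $a_1$ in the class $[q_{a_2}]*\overline{s_1}$, which is pure arithmetic. Until you either prove your simultaneous realization claim or reroute through a fixed closed surface as the paper does, the converse is incomplete.
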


Before we explain a purely algebraic formulation of Theorem~\ref{thmintro:main3d} and how Gauss composition, in fact Bhargava's cube description of it, arises, we describe consequences concerning non-isotopy of pushed-in Seifert surfaces. For this we use the if-statement of Theorem~\ref{thmintro:main3d}.

Arguably the simplest classical integer-valued knot invariant is the determinant, denoted by $D_K$ for a knot $K$. We choose the sign convention for which $D_K \equiv 1 \mod{4}$ for all knots $K$. 
With this convention, if a genus one Seifert surface for $K$ has Seifert form $[ax^2+bxy+cy^2]$, then $D_K$ equals the {discriminant} of $ax^2+bxy+cy^2$.
Recall that for a fixed non-zero integer $D$,
the \emph{oriented class group} $\mathcal{G}^+_D$ is the set of the $\SL_2(\Z)$-equivalence classes of \emph{primitive} IBQFs---those with coprime coefficients---with discriminant $D$, which form an abelian group under Gauss composition. Note that if $D$ is square-free, every IBQF with discriminant $D$ is primitive.
For $D \equiv 1 \mod{4}$, the neutral element is
$1=[x^2+xy+\frac{-D+1}{4}y^2]$.

\begin{theorem}\label{thmintro:main4d}
 Fix an integer $D \equiv 1 \mod{4}$ and suppose that there exists $a,c\in\Z$ with $1-4ac=D$ such that $[ax^2+xy+cy^2]^2\neq 1$.
 Then there exists a knot $K$ with determinant $D$ that has two disjoint genus 1 Seifert surfaces that are not ambiently isotopic in $B^4$ after being pushed in the $4$-ball.

 Suppose that in addition there exists $a',c'\in\Z$ with $1-4a'c'=D$ such that 
\[[ax^2+xy+cy^2]^2\neq[a'x^2+xy+c'y^2]^2\neq 1.\]
 Then one of the above Seifert surfaces can be chosen such
that its Seifert form is any given class of a primitive IBQF of discriminant $D$.
\end{theorem}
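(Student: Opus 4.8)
The plan is to deduce both statements from the sufficiency (``if'') direction of Theorem~\ref{thmintro:main3d}, combined with the standard fact that the Seifert form controls $B^4$-isotopy of pushed-in genus one surfaces through the double branched cover. Concretely, if two such surfaces are ambiently isotopic in $B^4$, then the double covers of $B^4$ branched along them are diffeomorphic rel boundary, so their intersection forms are isometric; for a genus one surface with Seifert matrix $V$ this intersection form is $V+V^{T}$, which is twice the Gram matrix of the IBQF representing the Seifert form. Hence isotopic surfaces have $\GL_2(\Z)$-equivalent Seifert forms, and since $\GL_2(\Z)$-equivalence of IBQFs of discriminant $D$ amounts to $\SL_2(\Z)$-equivalence up to passing to the inverse class, two surfaces whose Seifert forms $s,s'$ satisfy $s' \notin \{s, s^{-1}\}$ cannot be isotopic in $B^4$. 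This is the only topological input beyond Theorem~\ref{thmintro:main3d}, so distinguishing the surfaces reduces to a computation in $\mathcal{G}^{+}_{D}$.

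For the first statement I would set $g = [ax^2+xy+cy^2]$ and take the pair $s_1 = 1$ (the neutral class) and $s_2 = g^2$. Both are primitive of discriminant $D=1-4ac$ and satisfy $g^2 * s_1 = s_2$, so by Theorem~\ref{thmintro:main3d} they are realized by a disjoint genus one pair $F_1,F_2$ for a common knot $K$, which then has determinant $D$. Since $1^{-1}=1$, the hypothesis $g^2 \neq 1$ gives $s_2 = g^2 \notin \{1\} = \{s_1, s_1^{-1}\}$, so $F_1$ and $F_2$ are not ambiently isotopic in $B^4$ after being pushed in.

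For the second statement, fix a target class $t$ of discriminant $D$ and realize it on one of the two surfaces, using the freedom to place it on either. The pair $(s_1,s_2)=(t, g^2 t)$ puts the prescribed form on $F_1$, while $(s_1,s_2)=(g^{-2}t, t)$ puts it on $F_2$; both satisfy the composition relation of Theorem~\ref{thmintro:main3d} with the same $g$ and hence are realizable. Expanding the non-isotopy criterion $\{s_1,s_1^{-1}\}\cap\{s_2,s_2^{-1}\}=\varnothing$, the first pair fails to be non-isotopic only if $g^2=1$ or $g^2 t^2 = 1$, and the second only if $g^2 = 1$ or $g^2 t^{-2}=1$. If both pairs failed, then $g^2 t^2 = 1 = g^2 t^{-2}$, whence $g^4 = 1$, contradicting the hypothesis. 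Therefore at least one of the two pairs yields a disjoint genus one pair that is non-isotopic in $B^4$ and carries the prescribed Seifert form on one surface.

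The step I expect to require the most care is making this last dichotomy uniform over all prescribed classes $t$, in particular the imprimitive ones. When $t$ has content $f>1$ the classes $t, g^{\pm 2}t$ lie in a torsor for the narrow Picard group of the order of discriminant $D/f^2$, on which $\mathcal{G}^{+}_{D}$ acts through the (non-injective) reduction map, so $g^4\neq 1$ in $\mathcal{G}^{+}_{D}$ need not survive downstairs and the clean computation above may degenerate. To handle this I would exploit the full flexibility of Theorem~\ref{thmintro:main3d}: the admissible partners of a surface with form $t$ range over $\{[a,1,c]^2 * t\}$ as $(a,c)$ varies with $1-4ac=D$, and it suffices to produce one such partner outside $\{t,t^{-1}\}$. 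This reduces the remaining claim to controlling the image of the forms $[a,1,c]$ in the relevant class group, with the primitive case above serving as the prototype and the general case following by applying the same inversion argument after reduction.
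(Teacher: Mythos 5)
Your argument follows the paper's own proof of this theorem (given as Theorem~\ref{thm:main4d} together with Lemma~\ref{lem:intersectionformofdoublebranchedcover}) essentially step for step: the topological input is the same Gordon--Litherland identification of the intersection form of the branched double cover with the symmetrized Seifert form, reducing non-isotopy to $s_{F_1}\notin\{s_{F_2},\overline{s_{F_2}}\}$; the first part uses the pair $(1,g^2)$ where the paper uses $(g^2,1)$; and the second part runs the identical dichotomy, the paper choosing $s_2\in\{g^2*s,\,g^{-2}*s\}$ for $s_1=s$ where you choose between $(t,g^2t)$ and $(g^{-2}t,t)$.

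The one point deserving comment is the issue you flag in your final paragraph, and you are right that it is a genuine subtlety --- but your proposed repair does not close it. When the prescribed class $t\in\mathcal{Q}^+_D$ has content $f>1$, the group $\mathcal{G}^+_D$ acts on the classes of discriminant $D$ and content $f$ through the (generally non-injective) map to $\mathcal{G}^+_{D/f^2}$, so $g^4\neq 1$ in $\mathcal{G}^+_D$ does not by itself exclude $g^2*t=t$, nor the simultaneous equalities $g^2*t=\overline{t}=g^{-2}*t$; your dichotomy only closes once $g^4$ is known to act non-trivially on the orbit of $t$, i.e.\ to lie outside the stabilizer of $t$. Your suggestion to vary the connector over all classes $[ax^2+xy+cy^2]$ and ``apply the same inversion argument after reduction'' does not supply this, since the hypothesis of the theorem constrains these classes only in $\mathcal{G}^+_D$ and says nothing about their images in $\mathcal{G}^+_{D/f^2}$. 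For primitive $t$ the stabilizer is trivial and your computation is complete, as is the whole first statement. You should be aware, however, that the paper's own write-up has exactly the same elision: at this step it writes $s_2^{-1}$ for a possibly imprimitive class and cancels $s$ from $t^2*s=t^{-2}*s$ as though the action were free, so on the imprimitive case of the second statement your proposal is no less complete than the published argument, and your explicit identification of the difficulty is a point in its favor.
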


Theorem~\ref{thmintro:main4d} should be seen as a special case of the robust construction given in this article.
In particular, we give an exact characterization for when one can distinguish disjoint pairs of genus one Seifert surfaces by the intersection form of the double branched cover.
Also, the primitivity assumption in the theorem is removed below using more language concerning IBQFs which we avoid here for sake of exposition (cf.~Theorem~\ref{thm:main4d}).

It turns out the condition $[ax^2+xy+cy^2]^2\neq 1$ is often easy to check in the oriented class group. In fact, we have the following explicit result for negative determinants. (We note that given our sign convention, negative determinants corresponds to the case of a definite symmetrized Seifert from or, equivalently, knots with non-zero signature.)

\begin{cor} \label{corintro}
 Fix a negative integer $D \equiv 1 \mod{4}$. There exist a knot $K$ with determinant $D$ and two Seifert surfaces with boundary $K$ that are not ambiently isotopic in $B^4$ after being pushed in the $4$-ball if
 \begin{align*}
     \frac{-D+1}{4}\notin\left\{p^k\mid p\text{ prime and }k\in\{0,1,2\}\right\}.
 \end{align*}
\end{cor}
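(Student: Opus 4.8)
The plan is to deduce the corollary directly from the first part of Theorem~\ref{thmintro:main4d}: it suffices to exhibit integers $a,c$ with $1-4ac=D$ for which $[ax^2+xy+cy^2]^2\neq 1$ in the oriented class group $\mathcal{G}^+_D$. Writing $N=\frac{-D+1}{4}$, which is a positive integer since $D\equiv 1\pmod 4$ is negative, the constraint $1-4ac=D$ is exactly $ac=N$. So I would reformulate the task as follows: \emph{find a factorization $N=ac$ into positive integers such that the primitive form $f_{a,c}:=ax^2+xy+cy^2$ of discriminant $D$ is not $2$-torsion.} Here one uses that $[f_{a,c}]^2=1$ holds if and only if $f_{a,c}$ is equivalent to its opposite $ax^2-xy+cy^2$, that is, if and only if $f_{a,c}$ is an ambiguous form.

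Since $D<0$, every $f_{a,c}$ is positive definite, so I would proceed through the classical reduction theory of positive definite binary quadratic forms. Each class has a unique reduced representative, and a reduced form $\alpha x^2+\beta xy+\gamma y^2$ (normalised by $-\alpha<\beta\le\alpha\le\gamma$) is ambiguous precisely when $\beta=0$, $\alpha=\beta$, or $\alpha=\gamma$. The key observation is that $f_{p,N/p}=px^2+xy+(N/p)y^2$ is \emph{already} reduced whenever $p$ is a prime dividing $N$ with $N\ge p^2$, since then $-p<1\le p\le N/p$. Moreover, when $N>p^2$ this reduced form falls into none of the three ambiguous shapes, because $\beta=1\neq 0$, $\alpha=p\ge 2>1=\beta$, and $\alpha=p<N/p=\gamma$. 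Hence $[f_{p,N/p}]$ has order strictly greater than $2$, and in particular its square is nontrivial.

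It then remains to verify the elementary claim that, under the hypothesis $N\notin\{p^k\mid p\text{ prime},\ k\in\{0,1,2\}\}$, there exists a prime $p\mid N$ with $N>p^2$. I would take $p$ to be the smallest prime factor of $N$ and split into two cases: if $N$ has a second prime factor, then $N/p$ is divisible by a prime exceeding $p$, forcing $N>p^2$; if $N=p^k$ is a prime power, then the hypothesis gives $k\ge 3$, so $N\ge p^3>p^2$. Feeding the resulting $a=p$, $c=N/p$ into Theorem~\ref{thmintro:main4d} then produces the desired knot of determinant $D$ together with two Seifert surfaces that are not ambiently isotopic in $B^4$ after being pushed in.

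I expect no serious obstacle: all the genuine content already sits in Theorem~\ref{thmintro:main4d}, and what is left is the short verification above. The only point demanding care is matching the normalisation and sign conventions, and checking that the exclusion of $N\in\{1,p,p^2\}$ is exactly the situation in which this construction fails. Indeed, for $N=p^2$ the only nonprincipal factorization gives $px^2+xy+py^2$, which has $\alpha=\gamma$ and is thus ambiguous, while for $N=1$ and $N=p$ no nontrivial factorization exists; this is precisely why the stated list of exceptions is the right one.
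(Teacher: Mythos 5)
Your proof is correct, but it takes a genuinely different route from the paper's. The paper (Lemma~\ref{lem:quadnottriv-neg}) first computes the square explicitly, $[ax^2+xy+cy^2]^2=[a^2x^2+(1-2ac)xy+c^2y^2]$, and then detects non-triviality by noting that the principal form $x^2+xy+acy^2$ primitively represents no integer in $(1,ac)$, whereas the square class primitively represents $a^2$; this shows the square is non-trivial exactly when $a,c>1$ and $a\neq c$, and the corollary follows by the same elementary observation about factorizations of $N=\frac{1-D}{4}$ that you make. You instead avoid computing the square altogether: you use the criterion that $[f]^2=1$ iff $[f]=[\overline{f}]$ (an ambiguous class) and then invoke uniqueness of reduced positive definite forms to check that $px^2+xy+(N/p)y^2$, with $p$ the smallest prime factor of $N$ and $N>p^2$, is reduced and of none of the ambiguous shapes. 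Both arguments are elementary and both rely essentially on $D<0$ (the paper via the minimum of the principal form, you via reduction theory of definite forms); one small point worth making explicit in your write-up is that the positive definite classes form a subgroup of $\mathcal{G}^+_D$ isomorphic to the classical form class group, so the classical ambiguous-form criterion indeed computes orders in $\mathcal{G}^+_D$. The paper's computation buys a reusable closed formula for the square (and an if-and-only-if statement for all factorizations), while your argument is arguably shorter for the one-directional claim at hand and makes transparent why a single well-chosen factorization suffices.
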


\begin{example}\label{exp:HMKPS1}
The largest negative integer $D$ with $D \equiv 1 \mod{4}$ such that $\frac{-D+1}{4}$ is not $1$, a prime, or a square of a prime is $-23$. The example in \cite{HMKPS22} corresponds exactly to this case.
Up to taking inverses there is exactly one non-trivial element in $\mathcal{G}^+_{-23}$ that is the square of an element of the form $[ax^2+xy+cy^2]$:
\[ [2x^2\pm xy +3y^2]^2=[-(2x^2\pm xy +3y^2)]^2=[2x^2\mp xy +3y^2] \neq 1=[x^2+xy+6y^2].\] Hence, by Theorem~\ref{thmintro:main4d}, there must exist pairs of Seifert surfaces for a knot with determinant $-23$ that are not isotopic in $B^4$ after pushing in, as is claimed in Corollary~\ref{corintro}. See Example~\ref{ex:-23} for more details and the explicit connection to the example from~\cite{HMKPS22}.
\end{example}

Feh\'er \cite{Feher} has exhibited more examples of disjoint genus one pairs of Seifert surfaces.
We put these examples in  the number-theoretic framework of this article (cf.~Example~\ref{exp:Feher}).

Our framework readily allows treating positive discriminants as well.

\begin{example}
The smallest positive discriminant $D\equiv 1 \mod 4$ for which Theorem~\ref{thmintro:main4d} provides an example is $D = 25$.
There exists an element of $\mathcal{G}_{25}^+$ which is a non-trivial square of an element of the form $[ax^2+xy+cy^2]$, namely $[-x^2+5xy]$ (cf.~\S\ref{sec:squaredisc}).
Hence, Theorem~\ref{thmintro:main4d} applies. 
In fact, the above can be extended to all odd square discriminants $D\geq 25$; see Corollary~\ref{cor:nontrivclass-square}.
\end{example}

The key algebraic object that governs which Seifert forms arise from disjoint genus one pairs is a particular bilinear form on $\Z^4$. 
This bilinear form is in fact the Seifert form of a closed genus 2 surface in $S^3$. Below we describe it in terms of its symmetrization $Q$ and antisymmetrization $\theta$ and all that follows are results about this bilinear form that are independent of the knot theory above.

\subsection{Gauss composition via planes in $\Z^4$}\label{sec:numbertheoretic results}
Consider $\Z^4$ equipped with the quadratic form
\begin{align*}
Q(x_1,x_2,x_3,x_4)=x_1x_2+x_3x_4.
\end{align*}
We call a rank two oriented summand $L\subset \Z^4$ a \emph{plane}.

Following~\cite{AEW-2in4}, we parametrize planes $L$ by two so-called Klein vectors $a_1(L),a_2(L)$. In our setting, the latter can be thought of as a pair of IBQFs of equal discriminant.
The correspondence between these pairs of IBQFs and planes is given by explicit formulas both ways (cf.~\S\ref{sec:Kleinmap}).
For an equivalence class $[q]$ we denote by $\overline{[q]} = [q(-x,y)]$ the orientation-reversed class.
The following is the main number-theoretic result in this article.

\begin{theorem}\label{thm:gausscompositionviaKleinvectors}
For any plane $L$ with $\disc(Q|_L) \neq 0$, we have
\begin{align}\label{eq:relationforplane}
       [Q|_{L
        }] = \overline{[a_1(L)]}*[a_2(L)].
        % \text{ and }  [Q|_{L^\perp
        % }] = -\overline{[a_1(L)]}*\overline{[a_2(L)]}.
    \end{align}
\end{theorem}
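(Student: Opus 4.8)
The plan is to deduce \eqref{eq:relationforplane} from Bhargava's cube law by realizing the data of the plane $L$ as a single Bhargava cube whose three associated binary quadratic forms are, up to orientation, precisely $a_1(L)$, $a_2(L)$ and $Q|_L$. First I would fix the identification $\Z^4 \cong \quat(\Z)$ sending $(x_1,x_2,x_3,x_4)$ to $\SmMat{x_1}{x_3}{-x_4}{x_2}$, under which $Q$ becomes the determinant: $Q(M) = \det M$, with polarization $B_Q(M,N) = \det(M+N)-\det M-\det N$. Choosing an oriented $\Z$-basis $(M_1,M_2)$ of $L$, the restricted form $Q|_L$ is then represented by the IBQF $\det(xM_1+yM_2) = \det(M_1)\,x^2 + B_Q(M_1,M_2)\,xy + \det(M_2)\,y^2$. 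The key structural observation is that the ordered pair $(M_1,M_2)\in\quat(\Z)^2\cong\Z^2\otimes(\Z^2\otimes\Z^2)$ is exactly a Bhargava cube. Its three slicings produce three IBQFs: cutting along the outer $\Z^2$ that separates $M_1$ from $M_2$ returns $\det(xM_1+yM_2)=Q|_L$, while cutting along the row- and column-$\Z^2$ of the matrices returns two further forms $b_1,b_2$.

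The second step is to match $b_1$ and $b_2$ with the Klein vectors. Using the explicit Klein-map formulas recalled in \S\ref{sec:Kleinmap}, I would write $a_1(L)$ and $a_2(L)$ directly in the basis $(M_1,M_2)$ and compare them, slice by slice, with $b_1$ and $b_2$. I expect the two constructions to agree up to the orientation reversal $\overline{(\cdot)}$ and the signs built into Bhargava's slicing, so that each of $b_1,b_2$ equals one of $a_1(L),a_2(L)$ up to orientation. Determining the precise signs, and in particular which slice is reversed, is in my view the most delicate point of the argument, since the Klein parametrization of \cite{AEW-2in4} and Bhargava's cube law each come with independent orientation choices that must be rendered compatible. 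A clean way to control this is to verify the correspondence on one well-chosen plane---for instance one whose restricted form is the principal form---and then propagate it to all planes using that both sides of \eqref{eq:relationforplane} are equivariant for the action of $\SO(Q)(\Z)$.

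With the three forms identified, I would invoke the cube law: the three classes attached to a nondegenerate cube multiply to $\id$, that is $[b_1]*[b_2]*[Q|_L]=\id$. Since orientation reversal is inversion in the class group, solving for $[Q|_L]$ gives $[Q|_L]=\overline{[b_1]}*\overline{[b_2]}$, into which the identifications of $b_1,b_2$ with $a_1(L),a_2(L)$ substitute to produce exactly \eqref{eq:relationforplane}; here the bar pattern on the right-hand side of \eqref{eq:relationforplane} emerges precisely because passing from the cube's slices to $[Q|_L]$ introduces one further inversion. The hypothesis $\disc(Q|_L)\neq 0$ is what makes the cube nondegenerate and forces $a_1(L)$, $a_2(L)$ and $Q|_L$ to share a common nonzero discriminant, so that the cube law genuinely applies. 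The one remaining obstacle, besides the orientation bookkeeping, is that the forms occurring here need not be primitive, whereas the cleanest form of the cube law governs primitive forms and composition in $\SLClGrp$. I would therefore phrase the composition at the level of the monoid $\SLClMon$ of all forms of fixed discriminant and track $\content$ explicitly, or else reduce to the primitive case by factoring out the content and arguing locally at each prime.
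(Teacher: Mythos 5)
Your proposal is correct and follows essentially the same route as the paper's second proof of Theorem~\ref{thm:newthm} (and its extension in Theorem~\ref{thm:nonprimthm}): a basis $(v_1,v_2)$ of $L$ is packaged as a Bhargava cube whose three slicings are, up to explicit signs and orientation reversals, $q_L$, $q_{a_1(L)}$, and $q_{a_2(L)}$, after which the cube law and the sign bookkeeping of Lemma~\ref{lem:flipthingsaround} yield \eqref{eq:relationforplane}, with the non-primitive case handled exactly as you suggest via Dirichlet composition for forms of coprime content. The only cosmetic difference is that the paper pins down the orientation conventions by a direct calculation of the slicings rather than by checking one plane and propagating by equivariance.
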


One may see the above theorem as a geometric point of view on Gauss composition. Indeed, here is a concrete realization of Gauss composition:
Given a pair of, say, primitive IBQFs $q_1=a_1x^2+b_1xy+c_1y^2,q_2=a_2x^2+b_2xy+c_2y^2$ of equal discriminant, consider 
\begin{equation}\label{eq:GaussMagic}
L=\set{M\in \quat(\Z): \SmMat{-b_1}{-2a_1}{2c_1}{b_1}M=M\SmMat{b_2}{-2a_2}{2c_2}{-b_2}}
\end{equation}
where $\quat$ denotes the algebra of two by two matrices. 
Since $q_1$ and $q_2$ have the same discriminant, $L$ is a free rank two module over $\Z$, which is given as the solution of the four concrete linear equations in \eqref{eq:GaussMagic}. 
One finds a basis $v_1,v_2$ of $L$. 
Up to switching these basis vectors, the composition of $[q_1]$ and $[q_2]$ is $[q_1]*[q_2]=[q]$ where $q(x,y)=\det(xv_1+yv_2)$; see \S\ref{sec:Bhargava} for more details.

To summarize, composing two classes amounts to computing a basis of an explicit plane.
We remark that classical forms of Gauss composition usually reduce to solving congruence equations modulo the first coefficients of $q_1,q_2$ (which can be arranged to be coprime).
Upon close consideration, a similar statement is true for the above basis search after elementary manipulations.

We provide two proofs of Theorem~\ref{thm:gausscompositionviaKleinvectors};
one giving a new geometric perspective on Dirichlet composition and the other using Bhargava's cube \cite{bhargava_composition}.
In fact, the latter gives a geometric recipe that given any two IBQFs finds a Bhargava cube \cite{bhargava_composition} that gives rise to their composition (cf.~Remark~\ref{rem:explicitcube}).

\begin{remark}
Let us briefly remark on the historical context surrounding Theorem~\ref{thm:gausscompositionviaKleinvectors}.
%and its viewpoint on Gauss composition.
    In \cite{AEW-2in4}, planes and their associated integral quadratic forms were studied from a viewpoint of equidistribution (typical statistical independence of $[Q_{L}]$ and $[Q_{L^{\perp}}]$, where $L^\perp$ is the orthogonal complement with respect to $Q$). Here, the ambient quadratic form $Q$ can be any norm form on a quaternion algebra over $\Q$.
In the process of the proof, the authors found a version of the relations in Theorem~\ref{thm:gausscompositionviaKleinvectors}
(see \cite[Thm.~8.1\& App.B]{AEW-2in4}) though the exact formulation was irrelevant to the equidistribution purpose.
Chinta and Pereira Júnior \cite{ChintaPereira} later proved a slightly weaker form of these relations for the quadratic form $Q= x_1^2+x_2^2+x_3^2+x_4^2$.
The proof given here of the relations in \eqref{eq:relationforplane} could be carried out via the adelic machinery established in \cite{AEW-2in4}.
\end{remark}

\subsection{Symplectic planes and applications to topology}\label{sec:Intro:sympplanes}
In addition to the quadratic form $Q$, we equip $\Z^4$ with the integral symplectic form $\theta(\cdot,\cdot)$ determined by
\begin{align*}
& \theta(e_1,e_2)=-\theta(e_2,e_1)=\theta(e_3,e_4)=-\theta(e_4,e_3)=1,\\
& \theta(e_i,e_j)=0 \text{ for } \{i, j\}\notin\left\{\{1,2\},\{3,4\}\right\}.
\end{align*}
where $(e_1,e_2,e_3,e_4)$ is the standard basis. 
A plane $L$ is called \emph{symplectic} if it has an oriented basis $(v_1,v_2)$ with $\theta(v_1,v_2)=1$.
Its symplectic complement is defined as $L^\pperp=\set{x\in \Z^4: \theta(x,y)=0 \text{ for all } y\in L}$ (with the orientation that makes it symplectic).
As we will see in \S\ref{sec:knottheorysetup}, in order to establish the existence of a disjoint genus one pair of Seifert surface for a knot with prescribed determinant, it suffices to answer the following question.
For which discriminants is there a symplectic plane $L$ with 
\begin{align}\label{eq:DiffFormsIntro}
[Q|_{L}]\neq [Q|_{L^\pperp}],
\overline{[Q|_{L^\pperp}]}\,?
\end{align}
% The crucial ingredient to analyzing \eqref{eq:DiffFormsIntro} for symplectic planes turns out to be an explicit description of the quadratic forms on general planes and their orthogonal (as opposed to symplectic) complements.
In fact, we prove in particular the following.

\begin{cor}\label{cor:symplecticplanes}
Given an integer $D \equiv 1 \mod 4$, there exists a symplectic plane $L \subset \Z^4$ such that $Q|_L$ has discriminant $D$ and $[Q|_L] \ne [Q|_{L^\pperp}], [\overline{Q|_{L^\pperp}}]$ if and only if there exists $a, c \in \Z$ with $1-4ac = D$ and $[ax^2 + xy + cy^2]^2 \neq 1$.
\end{cor}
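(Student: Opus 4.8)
The plan is to transfer both sides of the equivalence into statements about the Klein vectors $a_1(L),a_2(L)$ and to glue them together using Theorem~\ref{thm:gausscompositionviaKleinvectors}. Throughout I would write $D=\disc(Q|_L)$ and use that the two Klein vectors and $Q|_L$ all have discriminant $D$, together with the fact that $\overline{[q]}=[q]^{-1}$ for a primitive class in $\mathcal{G}_D^+$ and that the principal class is the neutral element of $*$.

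The geometric heart, and the step I expect to cost the most, is to pin down the Klein vectors of a symplectic plane and of its symplectic complement. I would work in the model $\Z^4=\quat(\Z)$ with $Q=\det$, and write $\quat=V_1\otimes V_2$ as a tensor product of two rank-two spaces with $Q$ the product of the symplectic forms, so that the Klein map of \S\ref{sec:Kleinmap} is the splitting $\textstyle\bigwedge^2\Z^4\cong\mathrm{Sym}^2 V_1\oplus\mathrm{Sym}^2 V_2$ sending $v_1\wedge v_2\mapsto(a_1(L),a_2(L))$. The symplectic form $\theta$ is obtained from the symmetric bilinear form $B_Q$ of $Q$ by twisting the left tensor factor by the orientation-reversing involution $\epsilon=\SmMat{-1}{0}{0}{1}$, i.e. $\theta(\cdot,\cdot)=B_Q(\cdot,\epsilon\,\cdot)$, so that $L^\pperp=\epsilon\cdot L^{\perp}$ where $L^\perp$ is the $Q$-orthogonal complement. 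As an element of $\bigwedge^2$ the form $\theta$ then lies entirely in the first summand and equals the class of $xy$. Two consequences follow, and their careful verification — keeping track of orientations and of the factors of two in the discriminant pairing — is the main obstacle. First, the normalization $\theta(v_1,v_2)=1$ is equivalent to the middle coefficient of $a_1(L)$ being $\pm1$, so $a_1(L)=ax^2\pm xy+cy^2$ with $1-4ac=D$, which is automatically primitive; the sign is immaterial for the conclusion, since $[q]^2\neq1\iff[q]^{-2}\neq 1$, and I take $g:=[ax^2+xy+cy^2]$. Second, passing to $L^\pperp$ has the effect $(a_1(L),a_2(L))\mapsto(\overline{a_1(L)},a_2(L))$, because the Hodge star for $Q$ negates the second summand while the extra twist by $\epsilon$ reverses the orientation of the first.

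Feeding both planes into Theorem~\ref{thm:gausscompositionviaKleinvectors} I obtain
\begin{equation*}
[Q|_L]=\overline{[a_1(L)]}*[a_2(L)],\qquad [Q|_{L^\pperp}]=[a_1(L)]*[a_2(L)].
\end{equation*}
Eliminating $[a_2(L)]$ (using only associativity, commutativity, and that $[a_1(L)]$ is invertible) gives the decisive identity $[Q|_{L^\pperp}]=[a_1(L)]^{\,2}*[Q|_L]=g^{\pm2}*[Q|_L]$, with $g=[ax^2+xy+cy^2]$ and $1-4ac=D$.

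With this identity in hand both implications are short class-group computations. For the forward direction, a symplectic plane $L$ as in the statement supplies $a,c$ with $1-4ac=D$, and from $[Q|_L]\neq[Q|_{L^\pperp}]=g^{\pm2}*[Q|_L]$ I read off $g^2\neq 1$, using that the principal class acts as the identity for $*$. For the converse, given $a,c$ with $1-4ac=D$ and $g=[ax^2+xy+cy^2]$ satisfying $g^2\neq1$, I realize through the surjectivity of the Klein correspondence of \S\ref{sec:Kleinmap} a plane $L$ whose two Klein vectors both equal $ax^2+xy+cy^2$; it is symplectic because this form has middle coefficient $1$. Then $[Q|_L]=\overline{g}*g=1$ and $[Q|_{L^\pperp}]=g*g=g^2$, whence $[Q|_L]=1\neq g^2=[Q|_{L^\pperp}]$ and $\overline{[Q|_{L^\pperp}]}=g^{-2}\neq1=[Q|_L]$, which is exactly the inequality \eqref{eq:DiffFormsIntro}. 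This settles both directions.
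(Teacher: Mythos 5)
Your architecture matches the paper's (compute the Klein vectors of $L$ and $L^\pperp$, feed both into Theorem~\ref{thm:gausscompositionviaKleinvectors}, then do class-group algebra), and your converse is correct: the choice of a plane with both Klein vectors equal to $A(ax^2+xy+cy^2)$, giving $[Q|_L]=1$ and $[Q|_{L^\pperp}]=g^2$, is exactly the right construction. The forward direction, however, rests on a misidentification that is a genuine gap. For a symplectic plane it is $a_2(L)$ --- the Klein vector that enters Theorem~\ref{thm:gausscompositionviaKleinvectors} \emph{without} the bar --- that is forced to equal $\SmMat{1}{\alpha}{\gamma}{-1}$, i.e.\ $q_{a_2(L)}=ax^2+xy+cy^2$ (Lemma~\ref{lem:SympIf1}); the barred vector $a_1(L)$ is completely unconstrained: it can represent any class of discriminant $D$ and need not even be primitive. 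Your identity $[Q|_{L^\pperp}]=g^{\pm 2}*[Q|_L]$ with $g=[q_{a_1(L)}]$ is a correct elimination \emph{when $q_{a_1}$ happens to be primitive}, but the class $g$ occurring in it is not of the form $[ax^2+xy+cy^2]$ in general, so reading off $g^2\neq 1$ from $[Q|_L]\neq[Q|_{L^\pperp}]$ does not produce the witness $a,c$ that the corollary demands. (Indeed, by Corollary~\ref{cor:stabilizer} the inequality $[Q|_L]\neq[Q|_{L^\pperp}]$ is equivalent to $[q_{a_1}]\neq\overline{[q_{a_1}]}$ and carries no information about squares of special classes; and when $a_1$ is imprimitive the expression $g^{\pm2}*[Q|_L]$ is not even meaningful.)

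The repair uses the other half of your hypothesis. From $[Q|_L]=\overline{[q_{a_1}]}*[q_{a_2}]$ and $[Q|_{L^\pperp}]=[q_{a_1}]*[q_{a_2}]$ (Corollary~\ref{cor:nonprimLperpLpperp}) one eliminates the \emph{unconstrained} vector $a_1$ by applying $\overline{\phantom{x}}$ to the second relation (Lemma~\ref{lem:flipthingsaround}, needed because $\overline{[q_{a_1}]}$ is not $[q_{a_1}]^{-1}$ for imprimitive $a_1$) and cancelling the primitive class $[q_{a_2}]$, obtaining $\overline{[Q|_{L^\pperp}]}=[q_{a_2}]^{-2}*[Q|_L]$ --- note the orientation reversal, absent from your identity. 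Then $[Q|_L]\neq\overline{[Q|_{L^\pperp}]}$ forces $[q_{a_2}]^2\neq 1$ with $q_{a_2}=ax^2+xy+cy^2$ and $1-4ac=D$, as required. Relatedly, your claimed transformation $(a_1,a_2)\mapsto(\overline{a_1},a_2)$ under $L\mapsto L^\pperp$ disagrees with the paper's $\Phi(L^\pperp)=(-a_1(L),-\overline{a_2(L)})$ from Proposition~\ref{prop:KleinForSymp}; the two happen to induce the same class $[Q|_{L^\pperp}]$, but the discrepancy is a symptom of the same left/right (equivalently $a_1$ versus $a_2$) swap in your tensor-product model that produced the gap above, so the orientation bookkeeping you flag as "the main obstacle" does indeed need to be redone against the paper's conventions.
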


Corollary~\ref{cor:symplecticplanes} and variants thereof are the main input needed for the theorems in \S\ref{sec:topological results} as we will see in \S\ref{sec:knottheorysetup}.
Let us briefly discuss how \eqref{eq:DiffFormsIntro} relates to Theorem~\ref{thm:gausscompositionviaKleinvectors}.

\begin{proof}[Sketch of proof of Corollary~\ref{cor:symplecticplanes}]
For a plane $L$, a simple observation (cf.~\S\ref{sec:Kleinsymplectic}) shows that $a_2(L)$ is an IBQF of the form $ax^2+xy+cy^2$ if and only if $L$ is symplectic. 
Moreover, in this case $a_1(L^\pperp)=-a_1(L)$ and $a_2(L^\pperp) = -\overline{a_2(L)}$ as can be shown by elementary calculations (cf.~Proposition~\ref{prop:KleinForSymp}).
Altogether, applying Theorem~\ref{thm:gausscompositionviaKleinvectors} twice (for $L$ and $L^\pperp$) we get
\begin{align}\label{eq:a2translate}
    [a_2(L)]^{-2} * [Q|_{L%\cap\Z^4
    }]= \overline{[Q|_{L^\pperp%\cap\Z^4
    }].}
\end{align}
% Moreover, in this case $[Q|_{L^\pperp%\cap\Z^4
%     }] = -\overline{[Q|_{L^\perp%\cap\Z^4
%     }]}$ (cf.~Proposition~\ref{prop:KleinForSymp}) where $L^\perp = \{v \in \Z^4: Q(v,w) = 0\text{ for all }w \in L\}$ is the orthogonal complement of $L$ with appropriately chosen orientation (cf.~\eqref{eq:orthcompl-oriented}).
% An elementary consideration (cf.~Proposition~\ref{prop:KleinBasicProp}) 
% shows that $a_1(L^\perp) = -a_1(L)$ and $a_2(L^\perp) = a_2(L)$.
% Altogether, applying 
% \begin{align*}
%     [a_2(L)]^{-2} * [Q|_{L%\cap\Z^4
%     }]= \overline{[Q|_{L^\pperp%\cap\Z^4
%     }].}
% \end{align*}
Choosing $[a_1(L)] = [a_2(L)]$ one obtains a symplectic plane as in \eqref{eq:DiffFormsIntro} whenever there exists an IBQF of the form $ax^2+xy+cy^2$ of the same discriminant with $[ax^2+xy+cy^2]^2 \neq 1$.
\end{proof}

\subsection{The mixing conjecture and symplectic planes}\label{sec:equi}
Consider the collection of pairs
\begin{align}\label{eq:sparse}
\big\{([Q|_{L%\cap\Z^4
    }], [Q|_{L^\pperp}]): L \text{ symplectic},\, \disc(Q|_L) = D\big\}.
\end{align}
As we show in \S\ref{sec:knottheorysetup}, these are (up to an orientation-change in the second component) the pairs of Seifert forms of a disjoint genus one pair of Seifert surfaces as considered in \S\ref{sec:topological results}.
Moreover, as is visible from the above sketch of proof of Corollary~\ref{cor:symplecticplanes}, the set \eqref{eq:sparse}
is a very sparse subset of the set of all pairs of classes of IBQF's of discriminant $D$.
This sparsity gives rise to an equidistribution problem fitting into the context of the mixing conjecture posed by Michel and Venkatesh \cite{MV-ICM}, which we phrase in this subsection.

Any $\SL_2(\Z)$-class $s$ of a positive-definite IBQF gives rise to a complex multiplication (CM) point on the modular curve
\begin{align*}
\mathrm{z}_s \in Y_0(1) = \SL_2(\Z)\backslash \mathbb{H}
\end{align*}
Explicitly, the CM point associated to $[ax^2+bxy+cy^2]$ is the $\SL_2(\Z)$-orbit under M\"obius transformations of the unique root of $ax^2+bx+c$ contained in the complex upper-half plane $\mathbb{H}$.
We wish to examine for $D<0$ with $D \equiv 1 \mod 4$ the distribution of the set of pairs
\begin{align}\label{eq:sparse-CM}
\mathcal{CM}_{D,\mathrm{sympl}} = 
\big\{(\mathrm{z}_{[Q|_{L}]}, \mathrm{z}_{[Q|_{L^\pperp}]}): L \text{ symplectic},\, \disc(Q|_L) = D\big\}
\end{align}
in the product $Y_0(1) \times Y_0(1)$.
Let
\begin{align*}
\mathcal{C}_D =\{[ax^2+xy+cy^2]^2: a,c \in \Z \text{ with }1-4ac=D\}.
\end{align*}

\begin{conj}\label{conj:equi}
Let $(D_i)_i$ be a divergent sequence of negative discriminants with $D_i \equiv 1 \mod 4$.
Suppose additionally that $|\mathcal{C}_{D_i}| \to \infty$. 
Then the finite sets $\mathcal{CM}_{D_i,\mathrm{sympl}}$ are equidistributed in $Y_0(1) \times Y_0(1)$ with respect to the product of the normalized hyperbolic measures.
That is, for any $f \in C_c(Y_0(1) \times Y_0(1))$ we have as $i \to \infty$
\begin{align*}
\frac{1}{|\mathcal{CM}_{D_i,\mathrm{sympl}}|} \sum_{(\mathrm{z}_1,\mathrm{z}_2)\in \mathcal{CM}_{D_i,\mathrm{sympl}}} f(\mathrm{z}_1,\mathrm{z}_2) \to \int_{Y_0(1) \times Y_0(1)} f.
\end{align*}
\end{conj}

It follows from \eqref{eq:a2translate} that
\begin{align}\label{eq:sympl mixing}
\mathcal{CM}_{D,\mathrm{sympl}}
= \{(\mathrm{z}_{s}, \mathrm{z}_{c\ast\overline{ s}}): c\in \mathcal{C}_D,\ s \text{ a $\SL_2(\Z)$-class of IBQFs of discriminant }D\}.
\end{align}
Thus, the condition $|\mathcal{C}_{D_i}| \to \infty$ in Conjecture~\ref{conj:equi} equivalently asserts that the proportion of points in $\mathcal{CM}_{D_i,\mathrm{sympl}}$ contained in any fixed Hecke correspondence goes to zero as $i\to\infty$.
In particular, the condition is necessary.
Also, notice that equidistribution in each of the factors $Y_0(1)$ in Conjecture~\ref{conj:equi} follows from a theorem of Duke~\cite{duke88}.

By \eqref{eq:sympl mixing}, Conjecture~\ref{conj:equi} is a special case of the mixing conjecture of Michel and Venkatesh \cite{MV-ICM} which has seen dramatic progress in recent years starting with Khayutin's breakthrough \cite{IlyaAnnals} using fundamental rigidity results for diagonalizable actions on locally homogeneous spaces due to Einsiedler and Lindenstrauss \cite{EL-joiningsPIHES}.
Subsequent work of Blomer and Brumley \cite{BlomerBrumley}, Blomer, Brumley, and Khayutin \cite{BlomerBrumleyKhayutin}, and the upcoming work of Blomer, Brumley, {Radziwi\l\l} employed purely analytic methods to deduce under increasingly weaker assumptions on zeros of certain $L$-functions.
The question arises whether the additional average in Conjecture~\ref{conj:equi} (namely over $\mathcal{C}_{D_i}$) makes the problem more accessible.
For instance, one could aim to prove Conjecture~\ref{conj:equi} assuming no Landau-Siegel zeros for Dirichlet $L$-functions associated to the Kronecker symbols of the fields $\Q(\sqrt{D_i})$. 

\subsection*{Organization of the paper}
This paper is organized as follows. 

In \S\ref{sec:notation} we discuss some notation valid throughout the article.

In \S\ref{sec:Kleinmap}, we reconstruct the Klein map from \cite[\S2]{AEW-2in4} for planes in the algebra of $2\times2$ matrices.
We find the restriction on values of the Klein map at symplectic planes in \S\ref{sec:Kleinsymplectic}.

In \S\ref{sec:Bhargava}, we establish relations between the quadratic forms of planes and their associated Klein vectors.
In particular, this permits a comparison between the quadratic forms of planes and their orthogonal (or symplectic) complements.

In \S\ref{sec:knottheorysetup}, we recall various notions from knot theory and prove Theorem~\ref{thmintro:main3d} and \ref{thmintro:main4d}. We also discuss our results in the context of S-equivalence (in \S\ref{ss:Sequiv}) and the previous results of the third author \cite{Miller_22} as well as the realization problem for a fixed knot (in \S\ref{sec:realizationfixed}).

In \S\ref{sec:examples}, we provide various examples to illustrate our main theorems for varying discriminants. 
In particular, we prove Corollary~\ref{corintro} in \S\ref{sec:negdisc}.

In \S\ref{sec:openproblems}, we discuss a variety of open problems relating to the results of the present article.

\subsection*{Acknowledgments}
We thank Ken Baker, Gautam Chinta, Manfred Einsiedler, Zsombor Feh\'er, Kyle Hayden, and Filippos Sytilidis for helpful discussions. We in particular thank Zsombor for his very careful reading of an earlier version and for detailed feedback.

\section{Notation}\label{sec:notation}
We collect some notation, pertaining mostly to integral binary quadratic forms (IBQFs).
For an IBQF $q(x,y) = ax^2 +bxy+cy^2$ we recall that the discriminant is $\disc(q) = b^2-4ac= -4\det(M)$ when $M$ is the symmetric matrix representing $q$.
An integer $D$ is correspondingly called a discriminant if $D \equiv 0,1 \mod 4$.
In the topological part of this article, one may safely assume $D \equiv 1 \mod 4$.
The content of $q$, denoted by $\content(q)\in \Z_{\geq 1}$, is defined to be the greatest common divisor of the coefficients of $q$.
We say that $q$ is primitive if its content is $1$.

The group $\GL_2(\Z)$ acts by composition on the set of IBQFs preserving the discriminant as well as the content.
The $\SL_2(\Z)$-equivalence class of an IBQF $q$ is denoted by $[q]$.
We denote by $\mathcal{Q}_D^+$ the set of $\SL_2(\Z)$-equivalence classes of IBQFs of discriminant $D$ and by $\mathcal{G}_D^+$ the subset of equivalence classes of primitive forms.
Gauss composition defines a composition rule on $\mathcal{G}_D^+$ which turns it into an abelian group.
Note that it does not yield a composition rule on $\GL_2(\Z)$-equivalence classes.
We refer to $\mathcal{G}_D^+$ equipped with Gauss composition as the oriented class group of discriminant $D$ henceforth.
We write $\bar{q}(x,y) = q(-x,y)$ for a choice of orientation reversal and recall that $[\bar{q}] = [q]^{-1}$ for $[q] \in \mathcal{G}_D^+$. Also recall that for $D\equiv 1\mod 4$ (resp.~$D\equiv 0\mod 4$) the identity element of $\mathcal{G}_D^+$ is $1=1_D=[x^2+xy-\frac{D-1}{4}y^2]$ (resp.~$1=1_D=[x^2-\frac{D}{4}y^2]$).

\begin{remark}
The oriented class group $\mathcal{G}_D^+$ factors onto the usual class group $\mathcal{G}_D$.
In fact, $\mathcal{G}_D$ can be seen as the quotient of the set of primitive IBQFs of discriminant $D$ by the twisted $\GL_2(\Z)$-action through $g.q = \frac{1}{\det(g)}q \circ g^t$. 
Alternatively, $\mathcal{G_D}$ is the quotient of $\mathcal{G}_D^+$ by the $\Z/2\Z$-action given by $[q] \mapsto [-\overline{q}]$.
In particular, the map $\mathcal{G}_D^+ \to \mathcal{G}_D$ is at most $2-1$ and we have the following.
\begin{itemize}
    \item When $D<0$, the $\SL_2(\Z)$-action preserves the signature and hence the map $\mathcal{G}_D^+ \to \mathcal{G}_D$ is $2-1$.
    \item When $D>0$ and a solution to the negative Pell equation exists, $\mathcal{G}_D^+ \to \mathcal{G}_D$ is an isomorphism. 
    Recall that for $D \equiv 1 \mod 4$ (resp.~$D\equiv 0 \mod 4$) the negative Pell equation is $x^2+xy-\frac{D-1}{4}y^2 = -1$ (resp.~$x^2-\frac{D}{4}y^2 = -1$).
    If no solution exists, it is again $2-1$.
    In terms of hyperbolic geometry, this criterion amounts to saying that the geodesic associated to $q$ (obtained through connecting the zeros of $q$ on the boundary $\mathbb{P}^1(\R) \simeq \partial \mathbb{H}$) is equivalent via $\SL_2(\Z)$ to its time reversal. 
\end{itemize}
\end{remark}

We also remark that Gauss composition can be extended to a composition rule for IBQFs of coprime content (cf.~\S\ref{subsec:non-primitiv}).

For $D \equiv 1\mod 4$, define $\mathcal{S}^+_D$ to be the subgroup of $\mathcal{G}^+_D$ generated by the set
\begin{align}\label{eq:generatingset}
\{[ax^2+xy+cy^2]^2: a,c \in \Z \text{ with }1-4ac=D\}.
\end{align}
Note that \eqref{eq:generatingset} (and hence also $\mathcal{S}^+_D$) is invariant under $[q] \mapsto [\bar{q}]$ as $[ax^2-xy+cy^2] = [cx^2+xy+ay^2]$ for any $a,c$.

Binary forms may be identified with traceless matrices as follows.
Denote by $\quat$ the space of $2\times 2$ matrices and by $\quatzero$ the subspace of traceless matrices in $\quat$.
Define the so-called Gross lattice
$$
\GrossLat=\set{a\colon a\in \SmMat{\Z}{2\Z}{2\Z}{\Z}\cap \quatzero}\subset \quatzero(\Z).
$$
There is a linear bijection between the set of IBQFs and $\Lambda$.
Indeed, we identify a binary quadratic form $q=ax^2+bxy+cy^2$ with the traceless matrix $A(q)=\SmMat{b}{-2a}{2c}{-b}$. 
We denote the inverse of this identification by $A\in \quatzero\mapsto q_A$. 
 The identification is equivariant with respect to the $\SL_2(\Z)$-actions where one acts by conjugation on $\quatzero(\Z)$, that is, $A(g.q)=gA(q)g^{-1}$. 
 Note that $A(\cdot)$ restricts to an equivariant map from $\{q:\disc(q)=D\}$ to traceless matrices in $\Lambda$ of determinant $-D$. 
 Moreover, $A(q)$ is primitive in the Gross lattice $\Lambda$ if and only if $q$ is primitive.

\section{The Klein map}\label{sec:2in4Matrices}\label{sec:Kleinmap}

In this section, we set up the correspondence between planes in $\Z^4$ and Klein vectors as in \cite{AEW-2in4}. This was already alluded to in the introduction.
In \S\ref{sec:Kleinsymplectic}, we specialize the construction to symplectic planes.

\subsection{Setup and Notation}\label{subsec:Setup}

We may identify $\R^4$ with the space $\quat(\R)$ and $\bZ^4$ with $\quat(\Z)$.
The usual (standard) involution on $\quat$ is the adjugate operation, which we denote by $\overline{\big(\begin{smallmatrix}
  a & b\\
  c & d
\end{smallmatrix}\big)}=\big(\begin{smallmatrix}
  d & -b\\
  - c & a
\end{smallmatrix}\big)$. With respect to the basis $\Bcal=\pa{\big(\begin{smallmatrix}
  1 & 0\\
  0 & 0
\end{smallmatrix}\big),\big(\begin{smallmatrix}
  0 & 0\\
  0 & 1
\end{smallmatrix}\big),\big(\begin{smallmatrix}
  0 & 0\\
  -1 & 0
\end{smallmatrix}\big),\big(\begin{smallmatrix}
  0 & 1\\
  0 & 0
\end{smallmatrix}\big)}$ the bilinear form
$$
M(x,y)=\trace\pa{x\big(\begin{smallmatrix}
  1 & 0\\
  0 & 0
\end{smallmatrix}\big)\overline y}
$$
is given by the matrix 
\begin{equation}\label{eq:BilFormU}
U=\begin{pmatrix}
0 & 1 & 0 & 0\\
0 & 0 & 0 & 0\\
0 & 0 & 0 & 1\\
0 & 0 & 0 & 0
\end{pmatrix}.
\end{equation}
Since
$
M(y,x)=\trace\pa{y\big(\begin{smallmatrix}
  1 & 0\\
  0 & 0
\end{smallmatrix}\big)\overline x}=\trace\pa{\overline{y\big(\begin{smallmatrix}
  1 & 0\\
  0 & 0
\end{smallmatrix}\big)\overline x}}=\trace\pa{x\big(\begin{smallmatrix}
  0 & 0\\
  0 & 1
\end{smallmatrix}\big)\overline y}
$
the symmetric form $Q=M+M^t$ and the symplectic form $\theta=M-M^t$ are given by
\begin{equation}
\label{eq:forms}
Q(x,y)= \trace\pa{x\overline y},\quad \theta(x,y)= \trace\pa{x\Notj\overline y}.
\end{equation}
where $\Notj$ denotes the matrix $\big(\begin{smallmatrix}
  1 & 0\\
  0 & -1
\end{smallmatrix}\big)$. As customary for quadratic forms, we set $Q(x)=Q(x,x)$ and note that $Q(x)=2\det x$.

\subsection{Planes and the Klein map}

Recall that a plane $L \subset \Z^4$ is a rank two oriented summand. 
Here, an orientation amounts to a choice of basis of $L$ considered up to the equivalence relation of applying integral basis changes with determinant 1.
In the following, any basis chosen for a plane will be taken to match the orientation.
Via the identification $\Z^4 \simeq \quat(\Z)$ from above, any plane will also be considered an oriented summand of $\quat(\Z)$.
A plane $L$ defines an equivalence class $[q_L]$, where $q_L$ is defined via a choice of basis $(v_1,v_2)$ of $L$
as follows
\begin{multline}
\label{eq:QuadFormCoeff}
q_L(x,y):=\frac{1}{2}Q(xv_1+yv_2)=\frac{1}{2}\pa{Q(v_1)x^2+2Q(v_1,v_2)xy+Q(v_2)y^2}=\\
= \det(v_1)x^2+\trace(v_1\overline{v_2}) xy+\det(v_2)y^2\in \Z[x,y]    
\end{multline}

Let $\Lcal$ denote the set of planes.
We set $\Lcal_ {\neq 0} = \{L \in \Lcal: \disc(q_L) \neq 0\}$ and $\Lcal_D=\set{L\in\Lcal:\disc(q_L)=D}$ for every non-zero discriminant $D$.
We note that $\Lcal_D$ is always non-empty; e.g.~for $D \equiv 1 \mod 4$, $\Lcal_D$ contains the plane spanned by $\SmMat{1}{0}{0}{1}$ and $\SmMat{1}{\frac{D-1}{4}}{1}{0}$.
 Similarly to \cite{AEW-2in4} we would like to study $\Lcal_D$ via two associated vectors in $\quatzero(\Z)$, which we call the associated Klein vectors. 
 The reader can compare this discussion to \cite[\S 2]{AEW-2in4} but notice that here we consider subspaces (or sublattices) \emph{with} orientation (cf.~also \cite[\S 7]{AEW-2in4}).
Note that $\SL_2(\Z) \times \SL_2(\Z)$ acts on $\quat(\Z)$ by $(g_1,g_2).x = g_1xg_2^{-1}$ which clearly preserves the determinant.
In particular, this action induces an action of $\SL_2(\Z) \times \SL_2(\Z)$ on $\Lcal_D$ for every $D$.
 
Given a plane $L$ we write $L_{\mathrm{op}}$ for the same plane with reversed orientation.
Note $[q_{L_\mathrm{op}}] = [\overline{q_L}]$.
The orthogonal complement $L^\perp$ of a plane $L$ is defined as 
\begin{align}\label{eq:orthogonalcomplement}
L^\perp=\set{x\in \quat(\Z)\colon Q(x,\ell)=0,\,\forall\ell\in L}
\end{align}
with orientation as declared in \eqref{eq:orthcompl-oriented} below.

For a plane $L\in \Lcal$ we choose a basis $(v_1,v_2)$ for $L$
and consider the traceless matrices
\begin{equation}\label{eq:DefOfKleinMap}
a_1(L)=2v_1\overline{v_2}-\trace\pa{v_1\overline{v_2}}I_2,\quad
a_2(L)=2\overline{v_2}v_1-\trace\pa{\overline{v_2}v_1}I_2.    
\end{equation}
We call $a_1(L)$ and $a_2(L)$ the Klein vectors (“vectors” in the three-dimensional vector space $\quatzero$) associated to $L$.
By definition, $a_1(L)$ and $a_2(L)$ are elements of the Gross lattice $\Lambda$.

\begin{definition}[pair-primitivity] \label{def:pairprimitive}
We call two matrices $a_1,a_2\in \GrossLat$ \emph{pair-primitive} if $(a_1,a_2)$ is a primitive element of $\GrossLat^2\subset \quatzero(\bQ)^2$. That is, there is no prime $p$ such that $\tfrac1p a_i\in \GrossLat$ for both $i=1,2$.
\end{definition}
We set
\begin{align*}
\Kcal_{\neq 0} &= \set{(a_1,a_2)\in \GrossLat^2\colon 0\neq\det(a_1) = \det(a_2),\, a_1,a_2 \text{ pair-primitive}},\\
\Kcal_D &= \{(a_1,a_2)\in \Kcal_{\neq 0}\colon \det(a_1) = -D\}.
\end{align*}
Note that $\SL_2(\Z) \times \SL_2(\Z)$ acts on these sets by individual conjugation.
The aim of this subsection is to show that $\Kcal_D$ parametrizes elements of $\Lcal_D$.

\begin{proposition}[The Klein map] \label{prop:KleinBasicProp}
Consider the map
$$
\Phi:\Lcal_{\neq 0}\to \Lambda \times \Lambda,\, L\mapsto (a_1(L),a_2(L)).
$$
The following holds:
\begin{enumerate}
    \item \label{item:KleinIsArith} For $L\in\Lcal_D$ and for $i=1,2$ we have 
    $\det(a_i(L))=-\disc([q_L])=-D$. 
    \item \label{item:ResOrient} The map $\Phi$ is well-defined. Furthermore, we have  
    \begin{align*}
        \Phi(L_{\mathrm{op}})=(-a_1(L),-a_2(L)) = -\Phi(L).
    \end{align*}
    \item \label{item:PhiInv} 
    The map $\Phi$ induces bijections $\Lcal_{\neq 0} \to \Kcal_{\neq 0}$ and $\Lcal_D \to \Kcal_D$ for any discriminant $D \neq 0$.
    Moreover,  the inverse is realized by $\Psi\colon \Kcal_{\neq 0}\to \Lcal_{\neq 0}$ given by 
    \begin{equation}\label{eq:nielK}
    (a_1,a_2)\mapsto L_{a_1,a_2}:=\set{x\in \quat(\Z):a_1x=xa_2} 
    \end{equation}
    where the orientation agrees with a basis of $L_{a_1,a_2}\otimes \Q$ of the form $(a_1g,g)$ for some $0\neq g\in L_{a_1,a_2}\otimes \Q$ with $\det(g)>0$, or with a basis of the form $(g,a_1g)$, for some $g\in  L_{a_1,a_2}\otimes \Q$ with $\det(g) <0$ (at least one of these options is realizable).
        \item \label{item:KleinVecPerp} 
        For $L\in\Lcal_{\neq 0}$ with Klein vectors $a_1,a_2$ we have $L^\perp=L_{-a_1,a_2}$ or $L^\perp = L_{a_1,-a_2}$ depending on the choice of orientation.
        In particular, $L^\perp \in \Lcal_D$ whenever $L \in \Lcal_D$.
    \item\label{item:Phiequiv}
    The map $\Phi$ is $\SL_2(\Z)\times \SL_2(\Z)$-equivariant.
\end{enumerate}
\end{proposition}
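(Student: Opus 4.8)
The plan is to dispatch parts (1), (2), and (5) as direct computations in $\quat$, using the identities $x\overline{x}=\det(x)I_2$, $\overline{xy}=\overline{y}\,\overline{x}$, $\overline{a}=-a$ for traceless $a$ (since $\overline a=\trace(a)I_2-a$), $\overline{g}=g^{-1}$ for $g\in\SL_2$, cyclicity and conjugation-invariance of $\trace$, and $\det(A-\lambda I_2)=\det(A)-\lambda\trace(A)+\lambda^2$. For (1), expanding $\det(a_1(L))=\det\!\big(2v_1\overline{v_2}-\trace(v_1\overline{v_2})I_2\big)$ with the last identity and $\det(\overline{v_2})=\det(v_2)$ gives $4\det(v_1)\det(v_2)-\trace(v_1\overline{v_2})^2=-\disc(q_L)$, and the same for $a_2$ since $\trace(\overline{v_2}v_1)=\trace(v_1\overline{v_2})$. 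For (2), I observe that $a_1(L)$ is twice the traceless part of $v_1\overline{v_2}$; replacing $(v_1,v_2)$ by $(v_1,v_2)g$ with $g\in\SL_2(\Z)$ kills the scalar terms $v_i\overline{v_i}$ and recombines the cross terms with coefficient $\det(g)=1$ (using that the traceless part of $v_2\overline{v_1}=\overline{v_1\overline{v_2}}$ is the negative of that of $v_1\overline{v_2}$), giving well-definedness, while orientation reversal swaps $v_1,v_2$ and hence negates both Klein vectors. Integrality into $\GrossLat$ is exactly that $2M-\trace(M)I_2\in\GrossLat$ for $M\in\quat(\Z)$. For (5), conjugating the basis $(g_1v_1g_2^{-1},g_1v_2g_2^{-1})$ and using $\overline{g_2^{-1}}=g_2$, $\overline{g_1}=g_1^{-1}$ collapses $g_2^{-1}g_2$ and exhibits $a_1(g_1Lg_2^{-1})=g_1a_1(L)g_1^{-1}$ and $a_2(g_1Lg_2^{-1})=g_2a_2(L)g_2^{-1}$, which is the individual conjugation action.

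The core is (3). First I would record the pointwise identity $a_1(L)v_i=v_ia_2(L)$ (both sides equal $2v_1\overline{v_2}v_i-\trace(v_1\overline{v_2})v_i$ after $v_2\overline{v_2}=\det(v_2)I_2$), so that $L\subseteq L_{a_1(L),a_2(L)}$ for every plane. Next I show $L_{a_1,a_2}\otimes\Q$ is two-dimensional: since $a_1,a_2$ are traceless with $\det=-D\ne0$ they have distinct eigenvalues $\pm\sqrt{D}$, hence are diagonalizable over $\overline{\Q}$, and writing $x\mapsto a_1x-xa_2$ in a basis diagonalizing left multiplication by $a_1$ and right multiplication by $a_2$ (these commute) shows its eigenvalues are $0$ with multiplicity two and $\pm2\sqrt{D}$; thus its kernel is two-dimensional over $\overline{\Q}$, hence over $\Q$. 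Because $L$ is a summand it is saturated, and being a rank-two sublattice of the saturated rank-two lattice $L_{a_1(L),a_2(L)}$ it must equal it; so $\Psi\circ\Phi=\id$, the orientation convention in \eqref{eq:nielK} being arranged precisely to recover the orientation of $L$.

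The delicate half is $\Phi\circ\Psi=\id$ together with landing in the pair-primitive set $\Kcal_{\neq 0}$, which I expect to be the main obstacle. For $(a_1,a_2)\in\Kcal_{\neq 0}$ and $L=\Psi(a_1,a_2)$, the same computation gives $a_1(L)v=va_2(L)$ and $a_1v=va_2$ for $v\in L$; since $\disc(q_L)\neq0$ the form $q_L$ is not identically zero, so $L$ contains two linearly independent invertible elements $v,w$, and for these $a_1$ and $a_1(L)$ both commute with the non-scalar matrix $wv^{-1}$, hence both lie in the one-dimensional traceless part of the commutative algebra $\Q[wv^{-1}]$. Thus $a_1(L)=\lambda a_1$ and (conjugating through $v$) $a_2(L)=\lambda a_2$ with a common $\lambda\in\Q$, and the orientation convention forces $\lambda>0$. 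It then remains to force $\lambda=1$, and this is exactly where \emph{pair-primitivity} of both $(a_1,a_2)$ and $\Phi(L)$ enters, via the elementary fact that two proportional pair-primitive elements of $\GrossLat^2$ with positive ratio coincide. Proving that $\Phi(L)$ is itself pair-primitive is the crux: I would argue that the coordinates of $(a_1(L),a_2(L))$ are, under the explicit linear identification, the six Plücker coordinates of $L$, which are coprime precisely because $L$ is a direct summand (the summand input also used in \cite{AEW-2in4}). The refined bijection $\Lcal_D\leftrightarrow\Kcal_D$ then follows from part (1).

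Finally, for (4) I would use an anticommutation trick. If $a_1x=-xa_2$ (that is, $x\in L_{-a_1,a_2}$) and $a_1v=va_2$ for $v\in L$, then rewriting the second relation via $\overline{a_i}=-a_i$ as $\overline{v}a_1=a_2\overline{v}$ one computes $a_1(x\overline{v})=-(x\overline{v})a_1$, so $a_1$ anticommutes with $x\overline{v}$. A traceless invertible $2\times2$ matrix anticommutes only with trace-zero matrices (in its eigenbasis anticommuting matrices are off-diagonal), whence $\trace(x\overline{v})=Q(x,v)=0$ and $L_{-a_1,a_2}\subseteq L^\perp$; equality of the two rank-two saturated lattices follows from part (3) applied to $(-a_1,a_2)\in\Kcal_D$. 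The two options $L_{-a_1,a_2}$ and $L_{a_1,-a_2}$ correspond to the two orientations, and $\det(-a_1)=-D$ gives $L^\perp\in\Lcal_D$ by part (1).
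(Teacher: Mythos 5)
Your items (1), (2), and (5) match the paper's computations, and your item (4) is a correct alternative: the anticommutation trick (showing $a_1$ anticommutes with $x\overline{v}$, and a traceless matrix with nonzero determinant anticommutes only with traceless matrices) replaces the paper's direct identity $\det a_1\cdot Q(x,y)=-\det a_2\cdot Q(x,y)$. Likewise, your eigenvalue count $\{0,0,\pm 2\sqrt{D}\}$ for $x\mapsto a_1x-xa_2$ is a valid substitute for the paper's use of the left $\Q(a_1)$-module structure of $L_{a_1,a_2}\otimes\Q$, and your centralizer argument correctly pins down $\Phi(L_{a_1,a_2})$ as a positive rational multiple of $(a_1,a_2)$.

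The gap sits exactly where you flagged the crux. The linear map $\tilde{\Phi}\colon \bigwedge^2\quat(\Z)\to \GrossLat\times\GrossLat$ is \emph{not} unimodular: in the paper it is represented by an integer matrix of determinant $-2$, so its image is the index-two sublattice $\Lambda'=\{(a_1,a_2): a_1+a_2\in 2\quat(\Z)\}$, and certain coordinates of $(a_1(L),a_2(L))$ are sums and differences of Pl\"ucker coordinates rather than the Pl\"ucker coordinates themselves. Hence coprimality of the Pl\"ucker coordinates (which does follow from $L$ being a summand) only yields primitivity of $\Phi(L)$ \emph{in $\Lambda'$}, not pair-primitivity in $\GrossLat\times\GrossLat$; as stated, your argument leaves your scalar $\lambda$ determined only up to a factor of $2^{\pm1}$. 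The missing step is a parity argument exploiting $\det a_1=\det a_2$: if $(a_1,a_2)=2(a_1',a_2')$ with $a_i'\in\GrossLat$, then equality of determinants forces the diagonal entries of $a_1',a_2'$ to share a parity, so $(a_1',a_2')\in\Lambda'$, contradicting primitivity in $\Lambda'$. A second, easily repaired, slip: in proving $\Phi\circ\Psi=\Id$ you invoke $\disc(q_L)\neq 0$ for $L=\Psi(a_1,a_2)$ to produce two independent invertible elements of $L\otimes\Q$, but at that point this has not been established (it is part of the well-definedness of $\Psi$). Instead, obtain the invertible elements from the fact that $a_1,a_2$ share the characteristic polynomial $x^2-D$ and are therefore conjugate by some $g\in\GL_2(\Q)$; then $g$ and $a_1g$ are invertible elements of $L\otimes\Q$ and the rest of your argument proceeds.
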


At this point we come back and define the orthogonal complement as an oriented plane as follows: for $(a_1,a_2) \in \mathcal{K}_{\neq 0}$
\begin{align}\label{eq:orthcompl-oriented}
(L_{a_1,a_2})^\perp = L_{-a_1,a_2}.
\end{align}

\begin{remark}
The choice of orientation on the orthogonal complement in \eqref{eq:orthcompl-oriented} is only one possible choice.
Alternatively, one may choose an orientation on $L^\perp$ such that its oriented basis together with the oriented basis of $L$ forms an oriented basis of $\R^4$.
These two choices only agree for planes $L$ with $\disc(q_L) >0$.
The choice here simplifies the exposition and is natural as it is induced by a linear automorphism on $\bigwedge^2 \Z^4$ (compare this to the notion of the Hodge star operator). 
\end{remark}

\begin{remark}
Formulas similar to \eqref{eq:nielK} appear when exhibiting actions of the class group on e.g.~integer points of the sphere -- see for instance \cite{EllenbergMichelVenkatesh} and the references therein. 
\end{remark}

\begin{proof}[Proof of Proposition~\ref{prop:KleinBasicProp}]
For item \ref{item:KleinIsArith} we calculate:  
\begin{align*}
a_1(L)\overline{a_1(L)}
    &=
    \pa{2v_1\overline{v_2}-\trace\pa{v_1\overline{v_2}}I_2}\pa{2v_2\overline{v_1}-\trace\pa{v_1\overline{v_2}}I_2}\\
    &=\pa{4\det(v_1) \det(v_2)I_2-2\trace\pa{v_1\overline{v_2}}\pa{v_1\overline{v_2}+\overline{v_1\overline{v_2}}} +\pa{\trace\pa{v_1\overline{v_2}}}^2I_2}\\
    &= \pa{Q(v_1)Q(v_2) -\pa{\trace\pa{v_1\overline{v_2}}}^2}I_2. 
\end{align*}
Hence, 
\[\det(a_1)(L)=\frac{1}{2}Q\pa{a_1(L)}=\frac{1}{2}\trace{a_1(L)\overline{a_1(L)}}=Q(v_1)Q(v_2) -\pa{\trace\pa{v_1\overline{v_2}}}^2.\]
The same calculation for $Q(a_2(L))$ yields the same expression, so $Q(a_1(L))=Q(a_2(L))$. Moreover, by writing out $q_L(x,y)$ as in \eqref{eq:QuadFormCoeff} with the basis $(v_1,v_2)$ we see that 
\begin{align}\label{eq:detL_VS_det_ai}
-\disc q_L=4\det(v_1)\det(v_2)-Q(v_1,v_2)^2
&= Q(v_1)Q(v_2)-\pa{\trace\pa{v_1\overline{v_2}}}^2\nonumber\\
&=\det(a_i(L))
\end{align}
for $i=1,2$, as needed.

For item \eqref{item:ResOrient} we use \eqref{eq:DefOfKleinMap} to extend $\Phi$ to any pair of  linearly independent vectors $(v_1,v_2)\in \quat(\Q)^2$. In particular, for $L\in\Lcal_D$, $\Phi(L)$ is $\Phi(v_1,v_2)$ for a basis $(v_1,v_2)$ of $L$. 
Note that $\Phi$ is multilinear and antisymmetric in $(v_1,v_2)$. 
It follows that calculating the Klein vectors using a basis of the form $(av_1+bv_2,cv_1+dv_2)$  multiplies both Klein vectors of $L$ by $\det\SmMat{a}{b}{c}{d}$, so 
 item \eqref{item:ResOrient} follows. 
 Moreover, in preparation for the proof of item \ref{item:PhiInv} below, we observe that this also shows that  if $\mathrm{span}(v_1,v_2)=\mathrm{span}(u_1,u_2)$, then $\lambda \Phi(v_1,v_2)=\Phi(u_1,u_2)$  for some $\lambda\in\Q^\times$ and the orientations of $(v_1,v_2)$ and $(u_1,u_2)$ agree if and only if $\lambda>0$. 

We start the proof of item \eqref{item:PhiInv} by showing the containment $\Phi(\Lcal_D)\subset \Kcal_D$.
With item \eqref{item:KleinIsArith} pair-primitivity is all we need to prove. 
This can be proven similarly to \cite[Lemma 2.4]{AEW-2in4}; we give a variant thereof. As the map $\Phi$ is antisymmetric when extended to pairs of vectors as above, it factors through $\bigwedge^2\quat$:
\begin{center}
\begin{tikzcd}[column sep=3em]
\Lcal \arrow[rd] \arrow[r] & \bigwedge^2\quat(\Z) \arrow[d, "\tilde{\Phi}"']\\
& \quatzero(\Z)\times \quatzero(\Z) 
\end{tikzcd}
\end{center}
Denote by $(b_1,b_2,b_3,b_4)$ the basis $\Bcal$ fixed in \S \ref{subsec:Setup} and consider the basis $(c_1=b_1-b_2,c_2=2b_3,c_3=2b_4)$ of $\GrossLat$. 
  Representing $\tilde{\Phi}$ with respect to the bases
  \begin{align*}
&(b_1\wedge b_2,b_1\wedge b_3,b_1\wedge b_4,b_2\wedge b_3,b_2\wedge b_4, b_3\wedge b_4) \text{ and }\\
&((c_1,0),(c_2,0),(c_3,0),(0,c_1),(0,c_2),(0,c_3))
  \end{align*}
we get the matrix
\begin{equation*}
\begin{pmatrix}
1   & 0 & 0 & 0 & 0 & -1 \\   
0   & 0 & 0 & -1 & 0 & 0 \\  
0   & 0 & -1 & 0 & 0 & 0 \\  
1   & 0 & 0 & 0 & 0 & 1 \\  
0   & -1 & 0 & 0 & 0 & 0 \\  
0   & 0 & 0 & 0 & -1 & 0     
\end{pmatrix}
\end{equation*}
% \begin{equation*}
%     \left(\begin{array}{rrrrrr}
% 1 & 0 & 0 & 1 & 0 & 0 \\   
% 0 & 0 & 0 & 0 & -1 & 0 \\  
% 0 & 0 & -1 & 0 & 0 & 0 \\  
% 0 & -1 & 0 & 0 & 0 & 0 \\  
% 0 & 0 & 0 & 0 & 0 & -1 \\  
% -1 & 0 & 0 & 1 & 0 & 0     
% \end{array}\right)
% \end{equation*}
which has determinant $-2$.
This shows that $\tilde{\Phi}$ yields a bijection between primitive vectors in $\bigwedge^2\quat(\Z)$ and primitive vectors in the index two sublattice
\begin{align*}
\Lambda' = \{(a_1,a_2) \in  \Lambda \times \Lambda: a_1+a_2 \in 2 \quat(\Z)\}
\end{align*}
of $\Lambda \times \Lambda$.
It remains to show that the set $\mathcal{K}'$ of primitive vectors $(a_1,a_2)$ in $\Lambda'$ such that $\det(a_1)= \det(a_2)\neq 0$ is equal to $\mathcal{K}_{\neq 0}$.
Clearly, if $(a_1,a_2) \in \mathcal{K}_{\neq 0}$ then the fact that $\det(a_1)=\det(a_2)$ implies that $a_1+a_2 \in 2 \quat(\Z)$ because $(a_1)_{11},(a_2)_{11}$ need to share the same parity.
Also, $(a_1,a_2)$ is primitive in $\Lambda'$ because it is primitive in the larger lattice $\Lambda \times \Lambda$. Thus, $\mathcal{K}_{\neq 0} \subset \mathcal{K}'$.
On the other hand, if $(a_1,a_2) \in \mathcal{K}_{\neq 0}$ were a non-trivial multiple of $(a_1',a_2') \in \Lambda^2$ then $2(a_1',a_2') = (a_1,a_2)$.
But $a_1',a_2'$ have the same determinant and hence $a_1'+a_2' \in 2\quat(\Z)$ so that $(a_1',a_2') \in \Lambda'$.
This finishes the claims that $\mathcal{K'} = \mathcal{K}_{\neq 0}$ and $\Phi(\mathcal{L}_D) \subset \mathcal{K}_D$.
We remark that the above claim is easier to prove on $\Lcal_D$ when $D \equiv 1 \mod 4$.

 For the reverse containment, we need first to discuss the inverse $\Psi$. 
 Let $(a_1,a_2)\in \Kcal_{\neq 0}$.
 Since $\det(a_1)=\det(a_2)$ and $a_1,a_2$ are traceless, they have the same characteristic polynomial $x^2-D$. Thus, there exists $g\in \quat^\times(\bQ)=\GL_2(\Q)$ such that $ga_2g^{-1}=a_1$ (both $a_1,a_2$ are conjugate to the companion matrix of $x^2-D$).
 In particular, $\Psi(a_1,a_2)=L_{a_1,a_2}\neq\{0\}$.
 Since the subspace $L_{a_1,a_2}(\Q) := L_{a_1,a_2}\otimes \Q$ satisfies $L_{a_1,a_2}(\Q) = L_{a_1,a_1}(\Q)g$ and the centralizer $L_{a_1,a_1}(\Q) = \Q(a_1)$ of $a_1$ has dimension $2$, $L_{a_1,a_2}$ has rank $2$.
It also follows that $(a_1g,g)$ is a basis of $L_{a_1,a_2}(\bQ)$ as a $\Q$-vector space. We calculate $\Phi(a_1g,g)$:
 \begin{align}\label{eq:HowToChooseOrient}
 \begin{array}{l}
 2a_1g\overline{g}-\trace (a_1g\overline{g})I_2= 2\det(g)\cdot a_1-0= 2\det(g)\cdot a_1 \\
2\overline{g}a_1g-\trace (\overline{g}a_1g)I_2= 2\overline{g}ga_2-0= 2\det(g)\cdot a_2.
 \end{array}
 \end{align}
By the claim from the end of the paragraph on item~\eqref{item:ResOrient}, we see that the choice of orientation in the statement of the proposition gives indeed a well-defined orientation on $L_{a_1,a_2}$, so $\Psi(a_1,a_2)\in \Lcal_{\neq 0}$ and $\Psi$ is well-defined.

A direct calculation shows that $v_1,v_2\in L_{a_1(L),a_2(L)}$ so $L=L_{a_1(L),a_2(L)}$ as non-oriented subspaces.
The observation in the proof of item~\eqref{item:ResOrient} together with the calculation \eqref{eq:HowToChooseOrient} upgrades this to an equality of planes. 
This concludes that $\Psi\circ\Phi|_{\Lcal_D}=\Id_{\Lcal_D}$. 
To see that $\Phi\circ\Psi|_{\Kcal_D}=\Id_{\Kcal_D}$, let $(a_1,a_2)\in \Kcal_D$ and consider the following two bases for $L_{a_1,a_2}$: a %oriented
basis $(w_1,w_2)$ which spans $L_{a_1,a_2}$ and a %oriented
basis $(a_1g,g)$ of $L_{a_1,a_2}(\Q)$ as above (assume that there exist an element $g\in L_{a_1,a_2}(\bQ)$ with $\det(g)>0$; the other case is dealt with similarly). We denote  $(b_1,b_2):=\Phi(L_{a_1,a_2})=\Phi(w_1,w_2)$. By the observation above and \eqref{eq:HowToChooseOrient}, there exists $\lambda >0$ with 
$$
(b_1,b_2)=\Phi(w_1,w_2)=\lambda\Phi(a_1g,g)=2\lambda\det(g)\cdot(a_1,a_2).
$$
Since both $(b_1,b_2)$ and $(a_1,a_2)$ are primitive vectors in $\GrossLat^2$ and $\lambda \det(g)>0$, they must be equal, which gives $\Phi\circ\Psi|_{\Kcal_D}=\Id_{\Kcal_D}$, completing the proof of item~\eqref{item:PhiInv} .

For item~\eqref{item:KleinVecPerp} let $L \in \Lcal_D$ and denote $a_i=a_i(L)$.
Note that by item~\eqref{item:PhiInv} we have that
\begin{equation}\label{eq:InverseFormula}
L=\set{x\in \quat(\Z):a_1x=xa_2}
\end{equation}
and that $\set{y\in \quat(\Z):-a_1y=ya_2}$ is a plane in $\Lcal_D$.
We therefore show that, if $y\in \quat(\Q)$ with $-a_1y=ya_2$ and  $x\in \quat(\Q)$ with $a_1x=xa_2$, then $Q(x,y)=0$.
Using that $a_i\overline{a_i}=\det(a_i)\cdot I_2$ for $i=1,2$, we have
\begin{multline*}
\det(a_1)\cdot Q(x,y)=\det(a_1)\cdot\trace(x\bar{y})=\trace(\bar{a_1}a_1x\bar{y})=\trace(a_1x\bar{y}\bar{a_1})=\\
=\trace(a_1x\overline{\pa{a_1y}})=\trace(xa_2\overline{\pa{-ya_2}})
=\trace(-xa_2\bar{a_2}\bar{y})=-\det(a_2)\cdot Q(x,y). 
\end{multline*}
By item~\eqref{item:KleinIsArith}, $\det(a_1)=\det(a_2)\neq 0$, thus $Q(x,y)=0$ as desired.

Item~\eqref{item:Phiequiv} is a direct consequence of the definition of $a_1(\cdot)$ and $a_2(\cdot)$.
\end{proof}

\subsection{Symplectic planes and their complements}\label{sec:Kleinsymplectic}
 We call a plane $L$ \emph{symplectic} if we can find a basis $(v_1,v_2)$ of $L$ with $\theta(v_1,v_2)=1$. Such a basis will also be called symplectic.  
For a symplectic plane $L$ we define the symplectic complement $L^\pperp$ as 
$$
L^\pperp=\set{x\in \quat(\Z)\colon \theta(x,\ell)=0,\,\forall\ell\in L}.
$$
where the orientation is given by any symplectic basis. 

The following observation \eqref{eq:JisOrtho} is elementary, but it will be essential for relating information about $L^\perp$ to information about the symplectic complement $L^\pperp$. Using the explicit forms of $\theta$ and $Q$ as in \eqref{eq:forms} one can check directly that, a priori only as non-oriented planes, we have  
\begin{equation}\label{eq:JisOrtho}
        L^\pperp=\pa{L\Notj}^\perp
        =L^\perp \overline{\Notj}=L^\perp (-\Notj)=L^\perp \Notj
    \end{equation}  
    where we view $L,L^\perp,L^\pperp$ as subsets of~$\quat(\Z)$.
    We further have for $v,w\in\quat(\Z)$
    \begin{equation}\label{eq:PPvsPQuadForm}
        Q(v\Notj,w\Notj)=\trace\pa{v\Notj\overline{\Notj}\overline{w}}=-\trace\pa{v\overline{w}}=-Q(v,w),
    \end{equation}
    and a similar calculation gives 
    \begin{equation}\label{eq:PPvsPSympForm}
        \theta(v\Notj,w\Notj)=\trace\pa{v\Notj\Notj\overline{\Notj}\overline{w}}=-\trace\pa{v\Notj\overline{w}}=-\theta(v,w).
    \end{equation}

\begin{lem}\label{lem:SympIf1}
    A plane $L\in\Lcal$ is symplectic if and only if $a_2(L)=\SmMat{1}{\alpha}{\gamma}{-1}$ for some $\alpha,\gamma\in 2\Z$. 
\end{lem}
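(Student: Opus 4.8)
The plan is to reduce the entire statement to the single matrix identity
\begin{equation*}
(a_2(L))_{11} = \theta(v_1,v_2),
\end{equation*}
valid for any oriented basis $(v_1,v_2)$ of $L$, from which both implications follow immediately once combined with the fact that $a_2(L)$ lies in the Gross lattice $\GrossLat$.

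First I would establish this identity by a direct trace computation. Writing $\theta(v_1,v_2)=\trace(v_1\Notj\overline{v_2})$ and using cyclicity of the trace gives $\theta(v_1,v_2)=\trace(\Notj\,\overline{v_2}v_1)$. Setting $W=\overline{v_2}v_1$ and using that $\Notj=\SmMat{1}{0}{0}{-1}$, one reads off $\trace(\Notj W)=W_{11}-W_{22}$. On the other hand, the definition $a_2(L)=2\overline{v_2}v_1-\trace(\overline{v_2}v_1)I_2=2W-\trace(W)I_2$ shows that the $(1,1)$-entry of $a_2(L)$ equals $2W_{11}-(W_{11}+W_{22})=W_{11}-W_{22}$. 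Comparing the two expressions yields the claimed identity.

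Second, I would note that this identity exhibits $\theta(v_1,v_2)$ as an invariant of the oriented plane $L$: by Proposition~\ref{prop:KleinBasicProp}\eqref{item:ResOrient} the vector $a_2(L)$—and hence its $(1,1)$-entry—does not depend on the choice of oriented basis, so the common value of $\theta(v_1,v_2)$ over all oriented bases equals $(a_2(L))_{11}$. (This is of course also visible directly from the antisymmetry of $\theta$, which forces $\theta$ to transform by the determinant of the basis change, hence to be unchanged under $\SL_2(\Z)$.) Consequently $L$ is symplectic—that is, some oriented basis satisfies $\theta(v_1,v_2)=1$—if and only if $(a_2(L))_{11}=1$.

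Finally, I would translate the condition $(a_2(L))_{11}=1$ into the stated normal form. Since $a_2(L)\in\GrossLat$, it is traceless with both off-diagonal entries in $2\Z$; hence $(a_2(L))_{11}=1$ holds precisely when $a_2(L)=\SmMat{1}{\alpha}{\gamma}{-1}$ for some $\alpha,\gamma\in 2\Z$, which completes the proof. I do not anticipate a genuine obstacle here: the only point demanding care is the orientation bookkeeping, namely confirming that the existential statement ``there is a basis with $\theta=1$'' coincides with the basis-independent equality $(a_2(L))_{11}=1$. This is exactly where the $\SL_2(\Z)$-invariance of $\theta(v_1,v_2)$ enters, and it also explains why the normal form pins down the $(1,1)$-entry to be $+1$ rather than merely $\pm 1$, the sign $-1$ corresponding instead to $L_{\mathrm{op}}$.
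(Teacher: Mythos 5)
Your proposal is correct and follows essentially the same route as the paper: both reduce the lemma to the identity $\theta(v_1,v_2)=(a_2(L))_{11}$ via cyclicity of the trace applied to $\trace(v_1\Notj\overline{v_2})$, and then invoke membership of $a_2(L)$ in the Gross lattice $\GrossLat$ together with basis-independence of $a_2(L)$ to conclude. The only difference is cosmetic (you extract the $(1,1)$-entry of $2W-\trace(W)I_2$ directly, while the paper evaluates $\trace(\tfrac12 a_2(L)\Notj)$), and your extra remark on the orientation bookkeeping is a correct elaboration of the paper's terse ``The lemma follows.''
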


\begin{proof}
First note that we already know from Proposition \ref{prop:KleinBasicProp} that $a_2(L)$ has the form 
  $\SmMat{\beta}{\alpha}{\gamma}{-\beta}$ for $\beta\in\bZ,\alpha,\gamma\in2\Z$.
By \eqref{eq:forms} we have
\begin{align*}
\theta(v_1,v_2)=\trace\pa{v_1\Notj\overline{v_2}}=\trace\pa{\overline{v_2}v_1\Notj}
=\trace\big(\tfrac{1}{2}a_2(L)\Notj\big)=\beta
\end{align*}
where the second to last equality follows as $\trace\pa{I_2\Notj}=0$.
The lemma follows.
\end{proof}

Lemma~\ref{lem:SympIf1} together with the previous observations enable us to relate the Klein vectors and the associated quadratic forms for $L,\,L^\perp$ and $L^\pperp$:

\begin{proposition}\label{prop:KleinForSymp}
    Let $L\in \Lcal_D$ be  a symplectic plane. Then $L^\perp$ is symplectic. 
Furthermore, $L^\pperp \in \Lcal_D$ and $[q_{L^\pperp}] = [-\overline{q_{L^\perp}}]$.
If we write $a_2(L)=\SmMat{1}{\alpha}{\gamma}{-1}$, then 
    \begin{align}
        \Phi(L^\pperp)=(-a_1(L),\SmMat{1}{-\alpha}{-\gamma}{-1}).
    \end{align}
\end{proposition}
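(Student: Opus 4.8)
The plan is to derive all four assertions from three inputs that are already available: the behavior of $\Phi$ under the orthogonal complement recorded in \eqref{eq:orthcompl-oriented}, the transformation of the Klein vectors under right multiplication by $\Notj$, and the elementary identities \eqref{eq:JisOrtho}, \eqref{eq:PPvsPQuadForm}, \eqref{eq:PPvsPSympForm} that tie the $\perp$- and $\pperp$-constructions together through $\Notj$. The one genuinely delicate thread running through everything is orientation bookkeeping: three conventions are simultaneously in play (the symplectic orientations of $L$ and $L^\pperp$, and the $\perp$-orientation of \eqref{eq:orthcompl-oriented} on $L^\perp$), and the precise shape of the statement---in particular the orientation-reversing bar in the form identity---depends on tracking them correctly.

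First I would record the starting data. Since $L$ is symplectic, Lemma~\ref{lem:SympIf1} gives $a_2(L)=\SmMat{1}{\alpha}{\gamma}{-1}$, and by \eqref{eq:orthcompl-oriented} we have $\Phi(L^\perp)=(-a_1(L),a_2(L))$. As the $(1,1)$-entry of $a_2(L^\perp)=a_2(L)$ equals $1$, Lemma~\ref{lem:SympIf1} shows that $L^\perp$ is symplectic. Moreover, the trace computation in the proof of that lemma (which gives $\theta(w_1,w_2)=\beta$ for a positively oriented basis $(w_1,w_2)$ whenever $a_2=\SmMat{\beta}{\ast}{\ast}{-\beta}$) shows that the $\perp$-orientation of $L^\perp$ coincides with its symplectic orientation. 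This agreement is exactly what later allows me to use a single basis of $L^\perp$ both for computing $q_{L^\perp}$ and for evaluating $\theta$.

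Next I would compute how the Klein vectors transform under right multiplication by $\Notj$. Using $\overline{\Notj}=-\Notj$ and $\Notj^2=I_2$, for any basis $(v_1,v_2)$ one finds $2(v_1\Notj)\overline{(v_2\Notj)}=-2v_1\overline{v_2}$ and $2\overline{(v_2\Notj)}(v_1\Notj)=-\Notj(2\overline{v_2}v_1)\Notj$; since conjugation by $\Notj$ preserves traces, passing to traceless parts yields $a_1(v_1\Notj,v_2\Notj)=-a_1(v_1,v_2)$ and $a_2(v_1\Notj,v_2\Notj)=-\Notj\,a_2(v_1,v_2)\,\Notj$. Now take a positively oriented symplectic basis $(w_1,w_2)$ of $L^\perp$ and use $L^\pperp=L^\perp\Notj$ from \eqref{eq:JisOrtho}. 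By \eqref{eq:PPvsPSympForm} the basis $(w_1\Notj,w_2\Notj)$ of $L^\pperp$ has $\theta(w_1\Notj,w_2\Notj)=-\theta(w_1,w_2)=-1$, so it is \emph{negatively} oriented for the symplectic orientation of $L^\pperp$; by the antisymmetry of $\Phi$ (Proposition~\ref{prop:KleinBasicProp}~\eqref{item:ResOrient}) this contributes an overall sign. Substituting $\Phi(L^\perp)=(-a_1(L),\SmMat{1}{\alpha}{\gamma}{-1})$ then gives $\Phi(L^\pperp)=(-a_1(L),\SmMat{1}{-\alpha}{-\gamma}{-1})$, which is the Klein-vector formula. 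Membership $L^\pperp\in\Lcal_D$ is then immediate from Proposition~\ref{prop:KleinBasicProp}~\eqref{item:KleinIsArith}, because $\det(-a_1(L))=\det(a_1(L))=-D$.

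Finally, for the form identity I would work with the positively oriented basis $(w_2\Notj,w_1\Notj)$ of $L^\pperp$, namely the reversal of the negatively oriented one above, and apply \eqref{eq:PPvsPQuadForm}. From \eqref{eq:QuadFormCoeff} this yields $q_{L^\pperp}(x,y)=\tfrac12 Q\big((xw_2+yw_1)\Notj\big)=-\tfrac12 Q(xw_2+yw_1)=-q_{L^\perp}(y,x)$. Writing $q_{L^\perp}=ax^2+bxy+cy^2$, this is $-cx^2-bxy-ay^2$, and the $\SL_2(\Z)$ change of variables $(x,y)\mapsto(y,-x)$ carries $-\overline{q_{L^\perp}}=-ax^2+bxy-cy^2$ to exactly this form, so $[q_{L^\pperp}]=[-\overline{q_{L^\perp}}]$. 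The main obstacle, as flagged at the outset, is purely the consistent handling of orientations: every sign that produces the minus signs on $\alpha,\gamma$ and the orientation-reversal bar originates from the $\theta$-sign flip \eqref{eq:PPvsPSympForm} together with the basis reversal it forces, so the real work lies in verifying that the bases used are oriented exactly as claimed.
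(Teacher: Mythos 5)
Your proof is correct and follows essentially the same route as the paper: both pass from a positively oriented symplectic basis $(w_1,w_2)$ of $L^\perp$ to a basis of $L^\pperp=L^\perp\Notj$ and compute the Klein vectors and the quadratic form directly, the only difference being that the paper works with the already-positively-oriented basis $(w_2\Notj,w_1\Notj)$ while you track the sign of $(w_1\Notj,w_2\Notj)$ via the antisymmetry of $\Phi$. Your explicit verification that the $\perp$-orientation of $L^\perp$ from \eqref{eq:orthcompl-oriented} agrees with its symplectic orientation is a point the paper leaves implicit, but the substance is identical.
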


\begin{proof}
Recall that $L^\perp = \Psi(-a_1(L),a_2(L))$ and so $L^\perp$ is symplectic by Lemma~\ref{lem:SympIf1}. 
Choose then a symplectic basis $(w_1,w_2)$ for $L^\perp$ and notice that $(w_2\Notj,w_1\Notj)$ is a (a priori non-oriented) basis of $L^\pperp$ by \eqref{eq:JisOrtho}. 
By \eqref{eq:PPvsPSympForm}, $\theta(w_2\Notj,w_1\Notj)=-\theta(w_2,w_1)=1$ so it is a symplectic basis. We can therefore use it to calculate the quadratic form on $L^\pperp$ as well as $\Phi(L^\pperp)$.
For the former, note that
\begin{align*}
[q_{L^\pperp}] = [\det(x w_2j + y w_1 j)] = [-\det(xw_2+yw_1)] = [-\overline{q_{L^\perp}}].
\end{align*}
In particular, the statement about the discriminant of $L^\pperp$ follows.
It remains to compute the Klein vectors:
\begin{multline}
    a_1(L^\pperp)=2w_2\Notj\overline{w_1\Notj}-\trace\pa{w_2\Notj\overline{w_1\Notj}}I_2=-2w_2\overline{w_1}+\trace\pa{w_2\overline{w_1}}I_2=\\
    =-\overline{a_1(L^\perp)}=a_1(L^\perp)=-a_1(L)
\end{multline}
and
\begin{multline}\label{eq:minusOnAntiDia}
    a_2(L^\pperp)=2\overline{w_1\Notj}w_2\Notj-\trace\pa{\overline{w_1\Notj}w_2\Notj}I_2=-2\Notj\overline{w_1}w_2\Notj+\trace\pa{\Notj\overline{w_1}w_2\Notj}I_2=\\ 
    =-\Notj\overline{a_2(L^\perp)}\Notj=\Notj{a_2(L^\perp)}\Notj=\SmMat{1}{-\alpha}{-\gamma}{-1},
\end{multline}
where we used in the last equality that $a_2(L^\perp)=a_2(L)$. 
\end{proof}

\section{Gauss composition via Klein vectors and the Bhargava cube}\label{sec:Bhargava}

In this section, we establish the expression for Gauss composition in terms of Klein vectors as announced in the introduction in Theorem~\ref{thm:gausscompositionviaKleinvectors}.
For readability, we first do the case where all binary quadratic forms are primitive.  We then prove the more general case where the condition of primitivity is weakened to pair-primitivity.  The argument in this general case is similar to that of the primitive case, but requires more care, as classically, Gauss composition is only defined for primitive forms and for the non-primitive case, more setup is needed.

To understand how the Klein vectors $a_1(L),a_2(L)$ determine the quadratic form $[q_L]$, we identify them with IBQFs via the map $A \in \Lambda \mapsto q_A$ from \S\ref{sec:notation}.
Two vectors in the Gross lattice $\Lambda$ are pair-primitive (cf.~Definition~\ref{def:pairprimitive}) if and only if the associated IBQFs have coprime content.

\subsection{The primitive case}

Recall that $\Lcal_{\neq 0}$ denotes the set of planes $L$ with $\disc(q_L)$ non-zero.
The following is the primitive version of Theorem~\ref{thm:nonprimthm} below.

\begin{theorem}\label{thm:newthm}
    Let $L=L_{a_1,a_2}\in \Lcal_{\neq 0}$ and assume that $a_1,a_2 \in \Lambda$ are both primitive. Then $q_L$ is primitive, and we have
    \begin{equation}\label{eq:primitivethm}
        [q_L]=[q_{a_1}]^{-1}*[q_{a_2}].
    \end{equation}
\end{theorem}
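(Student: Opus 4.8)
The plan is to realize an oriented basis of $L$ as a Bhargava cube and to read off the three associated binary forms. Concretely, I would pick an oriented basis $(v_1,v_2)$ of $L=L_{a_1,a_2}$ and regard the pair $(v_1,v_2)\in\quat(\Z)^2\cong\Z^2\otimes\Z^2\otimes\Z^2$ as a cube, with $v_1,v_2$ the two slices in the third coordinate direction. Slicing in the three coordinate directions produces three pairs of integer $2\times2$ matrices $(M_i,N_i)$ and hence three binary quadratic forms $Q_1,Q_2,Q_3$, in the convention $Q_i(x,y)=-\det(M_i x-N_i y)$. I would then invoke Bhargava's cube law \cite{bhargava_composition}: when the cube is projective, $Q_1,Q_2,Q_3$ are primitive of a common discriminant and satisfy $[Q_1]*[Q_2]*[Q_3]=1$ in the oriented class group. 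The theorem then follows by identifying these three classes with $[q_{a_1}],[q_{a_2}],[q_L]$ and solving for $[q_L]$.

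The computational core is to match the three slice-forms with our forms. Writing $v_1=\SmMat{p_1}{q_1}{r_1}{s_1}$, $v_2=\SmMat{p_2}{q_2}{r_2}{s_2}$ and expanding the relevant $2\times2$ determinants, the distinguished direction gives $Q_3(x,y)=-\det(xv_1-yv_2)=-\overline{q_L}$, since $\det(xv_1+yv_2)=q_L$ and $q_L(x,-y)=\overline{q_L}$. For the other two directions I would compare the slice-matrices against the defining expressions $a_1=2v_1\overline{v_2}-\trace(v_1\overline{v_2})I_2$ and $a_2=2\overline{v_2}v_1-\trace(\overline{v_2}v_1)I_2$; under the identification $A\mapsto q_A$ from \S\ref{sec:notation}, a direct expansion shows that one direction reproduces $q_{a_1}$ exactly, while the remaining direction yields $q_{a_2}$ up to the orientation-reversing substitution $(x,y)\mapsto(y,-x)$ and an overall sign. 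Proposition~\ref{prop:KleinBasicProp}\eqref{item:KleinIsArith} guarantees that all three forms have the same discriminant $D=\disc(q_L)$, so the cube is nondegenerate.

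For primitivity: since $a_1,a_2\in\GrossLat$ are primitive, the forms $q_{a_1},q_{a_2}$ are primitive, so two of the three cube-forms are primitive; I would conclude the cube is projective and hence that the third form $q_L$ is primitive as well (equivalently, one argues directly that a prime dividing $\content(q_L)$ would force a common divisor of $\content(a_1)$ and $\content(a_2)$).

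The main obstacle is the bookkeeping of orientations and signs when passing from the cube law to \eqref{eq:primitivethm}. The slice-forms carry orientation-reversals and overall negations relative to $q_{a_1},q_{a_2},q_L$, and in the oriented class group both negation and orientation-reversal are genuinely nontrivial operations (only $[\overline{q}]=[q]^{-1}$ is automatic). The delicate point is to check that, after accounting for these, the relation $[Q_1]*[Q_2]*[Q_3]=1$ rearranges so that the inverse lands on $[q_{a_1}]$ rather than on $[q_{a_2}]$, producing precisely $[q_L]=[q_{a_1}]^{-1}*[q_{a_2}]$; note that a discriminant whose oriented class group is $2$-torsion cannot detect the difference, so I would fix the conventions on an example with an element of order $>2$. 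As an independent cross-check, I would use the $\SL_2(\Z)\times\SL_2(\Z)$-equivariance of Proposition~\ref{prop:KleinBasicProp}\eqref{item:Phiequiv} to reduce $a_1,a_2$ to concordant representatives with coprime leading coefficients and a common middle coefficient, solve the linear system $a_1x=xa_2$ for an explicit integral basis of $L$, and match $\det(xv_1+yv_2)$ against Dirichlet's explicit composition formula.
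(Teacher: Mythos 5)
Your proposal follows essentially the same route as the paper, which in fact contains both of your arguments: the main proof via the Bhargava cube with exactly your slice identifications (one slice form equal to $-\overline{q_L}$, one equal to $q_{a_1}$, and one equal to $-\SmMat{0}{1}{-1}{0}.q_{a_2}$), while your ``independent cross-check'' via concordant representatives and an explicit integral basis of $L$ is precisely the paper's first proof. The one step you leave open --- rearranging $[-\overline{q_L}]*[-q_{a_2}]*[q_{a_1}]=1$ into \eqref{eq:primitivethm} --- is settled in the paper not by testing conventions on an example but by the general identity $[-\overline{q_1}]*[q_2]=-\overline{[q_1]*[q_2]}$ (Lemma~\ref{lem:flipthingsaround}, or a one-line concordant-forms computation), which is what you should prove to close the sign bookkeeping.
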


\begin{proof}[Proof using Dirichlet composition]
We refer to e.g.~\cite[Ch.~14]{cassels} for general facts regarding Dirichlet composition.
Using equivariance of the Klein map $\Phi$ (see Proposition~\ref{prop:KleinBasicProp}\eqref{item:Phiequiv}) as well as the map $A \mapsto q_A$ (see \S\ref{sec:notation}) we may bring $\overline{q_{a_1}},q_{a_2}$ into concordant form (cf.~\cite[p.~335]{cassels}) without changing $[q_L]$.
That is, we suppose that $\overline{q_{a_1}} = \alpha_1 x^2 + \beta xy + \gamma_1 y^2$ and $q_{a_2} = \alpha_2 x^2 + \beta xy + \gamma_2 y^2$ where $\alpha_1,\alpha_2$ are coprime.
Since the discriminants of $q_{a_1},q_{a_2}$ agree, we have $\alpha_1 \gamma_1 = \alpha_2 \gamma_2$ and thus $\alpha_1 \mid \gamma_2$ and $\alpha_2 \mid \gamma_1$.
By the explicit inverse in Proposition~\ref{prop:KleinBasicProp} we have
\begin{align*}
L=\set{x\in \quat(\Z): \SmMat{-\beta}{-2\alpha_1}{2\gamma_1}{\beta}x=x\SmMat{\beta}{-2\alpha_2}{2\gamma_2}{-\beta}}.
\end{align*}
It is readily checked that
\begin{align*}
v_1 = \SmMat{-\alpha_1}{0}{\beta}{-\alpha_2},\
v_2 = \SmMat{0}{-1}{\frac{\gamma_2}{\alpha_1}}{0}
\end{align*}
is an oriented basis of $L$.
Since $\det(xv_1+yv_2) = \alpha_1\alpha_2 x^2 + \beta xy + \star y^2$, the definition of the Gauss composition via concordant forms implies that $[q_L]$ is indeed the composition of $[q_{a_1}]^{-1}$ and $[q_{a_2}]$.
\end{proof}
We provide a second proof using Bhargava's cube law. This proof has the advantage that we do not need to choose special representatives of the classes of the involved quadratic forms: it works directly with the given plane $L$ rather than the equivalence class of $L$ under $\SL_2(\Z)\times\SL_2(\Z)$.
%This proof has the advantage that we do not need to chose special representatives of the involved quadratic forms: with the cube we get the composition on the nose rather than up to equivalence. 
As a by-product, this proof provides a recipe to construct a Bhargava cube that realizes two given quadratic forms as slicings of the cube; see Remark~\ref{rem:explicitcube}.

\begin{proof}[Proof using the Bhargava cube]
Take a basis $(v_1,v_2)$ of $L(\Z)$ with $\Phi(v_1,v_2)=(a_1,a_2)$ and set $M_1=v_1,\,N_1=v_2$ in the notation of \cite[\S 2.1]{bhargava_composition} to define a Bhargava cube. The cube law yields three IBQFs $q_1,q_2,q_3$ with $[q_1]*[q_2]*[q_3]=\Id$.
By definition of $M_1$ and $N_1$, $q_1 = -\det(xM_1-yN_1) =-q_L(-x,y)= -\overline{q_L}(x,y)$, and
a direct calculation shows that $q_2=\SmMat{0}{1}{1}{0}.q_{a_2}$ and $q_3=-\overline{q_{a_1}}$. 
As $q_{a_1}$ and $q_{a_2}$ are assumed to be primitive, $q_2$ and $q_3$ are primitive. It follows then that $q_1$ and therefore $q_L$ is primitive (use for example the equivalence of the cube laws with Dirichlet's composition, see \cite[Equation (41)]{bhargava_composition}).
Moreover, we have 
$[-\overline{q_L}]*[\overline{q_{a_2}}]*[-\overline{q_{a_1}}]= \mathrm{id}$ 
or equivalently (as $\overline{\cdot}$ is inversion and the class group is abelian)
\begin{align}
[-\overline{q_L}] 
= [q_{a_2}]*[-q_{a_1}] = [-q_{a_1}] * [q_{a_2}].
\end{align}

It remains to show that this implies \eqref{eq:primitivethm}.
To that end, we only need to prove that for any two primitive IBQFs $q_1$ and $q_2$ of the same discriminant we have $-\overline{[q_1]*[q_2]} = [-\overline{q_1}]*[q_2]$.
This can be either done using the Bhargava cube (cf.~Lemma~\ref{lem:flipthingsaround}) or as follows:
assume that $q_1$ and $q_2$ are concordant, i.e.~$q_1=a_1x^2+bxy+c_1y^2$ and $q_2=a_2x^2+bxy+c_2y^2$ with $a_1a_2\neq0$ and such that $a_1$ and $a_2$ are coprime (see \cite[p.~336]{cassels}). Then their composition is $q_3(x,y)=a_1a_2x^2+bxy+\frac{D-b^2}{4a_1a_2}y^2$ (see \cite[p.~335]{cassels}). 
Note that $-\overline{q_1}$ and $q_2$ are also concordant, and their composition is $-\overline{q_3}$.
This proves the above claim and hence the theorem.
% We freely use now facts from Proposition \ref{prop:CompoFacts}. Let $\gamma_0$ denote an element of $\GL_2(\Z)\setminus \SL_2(\Z)$.
% First notice that $[Q_1]=[\SmMat{-1}{0}{0}{1}.q_L]=\gamma_0.[q_L]$ and that since $\SmMat{0}{1}{-1}{0}\in \SL_2(\Z)$ we have $[Q_2]=[-q_{a_2}]$.
% As $[Q_1]=[Q_2]^{-1}[Q_3]^{-1}$ we have    
% \begin{align}
% \gamma_0.[q_L]=[q_{a_1}]^{-1}[-q_{a_2}]^{-1}=[q_{a_1}]^{-1}\pa{\gamma_0.[q_{a_2}]}=\gamma_0.\pa{[q_{a_1}]^{-1}[q_{a_2}]}.    
% \end{align}
% The theorem follows.
\end{proof}

% \begin{cor}
% Let $L=L_{a_1,a_2}$ be a subspace with $a_1,a_2$ primitive. Then    
% $$[q_{L^\perp}]=\gamma_0.[q_{a_1}][q_{a_2}],\quad [q_{a_i^\perp}]=\gamma_0. [q_{a_i}]^2,\, i=1,2.$$
% If $L$ is symplectic we have
% $$[q_{L^\pperp}]= -\overline{[q_{L^\perp}]} =  [q_{a_1}][q_{a_2}].$$

% \end{cor}
% \begin{proof}
% Since $L^\perp=\Phi(-a_1,a_2)$ we have 
% $$
% [q_{L^\perp}]=[q_{-a_1}]^{-1}[q_{a_2}]=[-q_{a_1}]^{-1}[q_{a_2}]=\gamma_0.[q_{a_1}][q_{a_2}].
% $$
% Since $a_i^\perp=L_{-a_i,a_i}$, this also shows the corresponding claim for $a_i^\perp$. Similarly, recall that for a symplectic plane $L$ we have by Proposition \ref{prop:KleinForSymp} that  $L^\pperp=\Phi(-a_1,\SmMat{1}{-\alpha}{-\gamma}{-1})$ where $a_2=\SmMat{1}{\alpha}{\gamma}{-1}$. Note that $q_{\SmMat{1}{-\alpha}{-\gamma}{-1}}=-q_{a_2}(-x,y)=\SmMat{-1}{0}{0}{1}.q_{a_2}$. We therefore have
% \begin{align*}
% [q_{L^\pperp}]=[q_{-a_1}]^{-1}[\SmMat{-1}{0}{0}{1}.q_{a_2}]=\pa{\gamma_0.[q_{a_1}]}\pa{\gamma_0.[q_{a_2}]}=\\
% =\gamma_0.\pa{[q_{a_1}]\pa{\gamma_0.[q_{a_2}]}}=\gamma_0^2.\pa{[q_{a_1}][q_{a_2}]}=[q_{a_1}][q_{a_2}],
% \end{align*}
% as claimed.
% \end{proof}

% \begin{cor}
%     Let $L=L_{a_1,a_2}$ be a symplectic space with $a_1$ primitive ($a_2$ is automatically primitive in this case). Then 
%     $$
% [q_L]=[q_{L^\pperp}]\iff [q_{a_1}]^2=\Id\iff [q_{a_1^\perp}]=\Id.
%     $$
%     Similarly, if $L^\pperp_{op}$ denotes $L^\pperp$ with reversed orientation, then  
%      $$
% [q_L]=[q_{L^\pperp_{op}}]\iff [q_{a_2}]^2=\Id\iff [q_{a_2^\perp}]=\Id.
% $$
% \end{cor}

\subsection{The general case}\label{subsec:non-primitiv}

Previously, we have only discussed Gauss composition for primitive quadratic forms of equal discriminant. 

In general, there is no unique composition of classes of non-primitive quadratic forms.  However, in the case where two classes $[q_1]$, $[q_2]$ have coprime content, it is known  that the composition $[q_1][q_2]$ coming from the Dirichlet law is well-defined.  This was written down first in \cite{pall-nonprimitive}, and generalized to quadratic forms over integral domains in \cite{butts-estes}.  For a more modern perspective, see Penner \cite{penner_geomofgauss}. 

\begin{prop}[\cite{pall-nonprimitive}, Theorem 2]
Let $s_1$, $s_2$ be ideal classes of discriminant $D$, with content $m_1$, resp.~$m_2$, such that $\gcd(m_1, m_2) = 1$.

Then one can (non-uniquely) find a pair of concordant forms  $q_1 = a_1 x^2 + b xy + a_2 c y^2 \in s_1$ and $q_2 =  a_2 x^2 + b xy + a_1 c y^2\in s_2$, and the class of
\[
q_1 q_2 = a_1 a_2 x^2 + b xy + cy^2
\]
is independent of the choices of $q_1$ and $q_2$ and has content $m_1m_2$.
That is, there is a well-defined composition law (known as \emph{Dirichlet composition})
\begin{align*}
\{s_1 \in \mathcal{Q}_D^+&: \mathrm{content}(s_1) = m_1\}
\times
\{s_2 \in \mathcal{Q}_D^+: \mathrm{content}(s_2) = m_2\} \\
&\longrightarrow
\{s \in \mathcal{Q}_D^+: \mathrm{content}(s) = m_1m_2\},
\end{align*}
whenever $\gcd(m_1, m_2) = 1$.
\end{prop}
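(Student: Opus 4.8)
The plan is to run the classical three-step construction of Dirichlet composition—production of concordant representatives, computation of the content, and independence of the result from all choices—using the hypothesis $\gcd(m_1,m_2)=1$ at each stage. For existence, write $q_i = m_i q_i^\circ$ with $q_i^\circ$ primitive of discriminant $D/m_i^2$. As $q_i^\circ$ is primitive it represents infinitely many primes, so I may choose distinct primes $\ell_1,\ell_2$, each coprime to $2Dm_1m_2$, with $q_i^\circ$ representing $\ell_i$; after an $\SL_2(\Z)$-change $q_i$ then has leading coefficient $a_i=m_i\ell_i$, and coprimality of $m_1,m_2$ forces $\gcd(a_1,a_2)=1$. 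Since $\disc(q_i)=D$ the middle coefficient satisfies $b_i^2\equiv D \pmod{4a_i}$, and $b_1\equiv b_2 \pmod 2$ because $b_1^2\equiv D\equiv b_2^2\pmod 4$; the Chinese remainder theorem (whose only overlap, at $2$, is thus consistent) produces a common $b$ with $b\equiv b_i\pmod{2a_i}$, hence $b^2\equiv D\pmod{4a_1a_2}$. Setting $c=(b^2-D)/(4a_1a_2)\in\Z$ gives concordant representatives $q_1=a_1x^2+bxy+a_2cy^2\in s_1$ and $q_2=a_2x^2+bxy+a_1cy^2\in s_2$ of the stated shape, their contents being $m_1,m_2$ since content is an $\SL_2(\Z)$-invariant.

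Next I would check $\content(q_1q_2)=\gcd(a_1a_2,b,c)=m_1m_2$ prime by prime. With $v_p$ the $p$-adic valuation and $\alpha_i=v_p(a_i)$, $\beta=v_p(b)$, $\gamma=v_p(c)$, one has $v_p(m_1)=\min(\alpha_1,\beta,\alpha_2+\gamma)$ and $v_p(m_2)=\min(\alpha_2,\beta,\alpha_1+\gamma)$, while the quantity to be identified is $\min(\alpha_1+\alpha_2,\beta,\gamma)$. As $\gcd(m_1,m_2)=1$, at each $p$ at least one of $v_p(m_1),v_p(m_2)$ vanishes; assuming by symmetry $v_p(m_2)=0$, one of $\alpha_2,\beta,\alpha_1+\gamma$ is $0$, and in each of the three resulting cases a one-line check gives $\min(\alpha_1+\alpha_2,\beta,\gamma)=\min(\alpha_1,\beta,\alpha_2+\gamma)=v_p(m_1)$. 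This step is purely combinatorial and does not use the discriminant relation.

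The essential step is independence of $[q_1q_2]$ from the choices, which—already for primitive forms—is the content of Gauss's theorem, so I would reduce to that case through the dictionary between proper equivalence classes of forms and oriented invertible ideal classes of quadratic orders. Writing $D=f^2D_K$ with $D_K$ fundamental, a class of content $m$ and discriminant $D$ corresponds to an invertible ideal class of the order $\mathcal{O}_{D/m^2}$ of conductor $f/m$, and the Dirichlet formula $q_1q_2=a_1a_2x^2+bxy+cy^2$ is exactly the form attached to the product of the corresponding ideals. The genuine subtlety is that $s_1,s_2$ live in the distinct orders $\mathcal{O}_{D/m_1^2}$ and $\mathcal{O}_{D/m_2^2}$, both contained in the larger order $\mathcal{O}_{D/(m_1m_2)^2}$; their relative conductor indices are $m_2$ and $m_1$, and $\gcd(m_1,m_2)=1$ is precisely what makes extension of each ideal to $\mathcal{O}_{D/(m_1m_2)^2}$ followed by multiplication land in an invertible class of content exactly $m_1m_2$ and descend to a well-defined map $\mathrm{Pic}(\mathcal{O}_{D/m_1^2})\times\mathrm{Pic}(\mathcal{O}_{D/m_2^2})\to\mathrm{Pic}(\mathcal{O}_{D/(m_1m_2)^2})$. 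Well-definedness of $[q_1q_2]$ then follows from that of ideal multiplication. I expect the main obstacle to be exactly this cross-order bookkeeping—setting up the extension-and-multiply operation and matching it to the elementary formula—together with the usual factor-of-$2$ and orientation conventions in the form–ideal correspondence, which are routine but must be handled with care.
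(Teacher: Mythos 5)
Your proposal is essentially correct, but note that the paper does not prove this statement at all: it is quoted verbatim from Pall's Theorem~2 in \cite{pall-nonprimitive}, whose own argument is the classical, purely form-theoretic one (united/concordant forms in the style of Gauss--Dirichlet--Arndt, with direct verification of independence). Your existence step and your $p$-adic computation of $\content(q_1q_2)$ are complete and correct as written --- the three-way case analysis at a prime with $v_p(m_2)=0$ does give $\min(\alpha_1+\alpha_2,\beta,\gamma)=v_p(m_1)+v_p(m_2)$ --- and these two steps are genuinely self-contained. Where you diverge is the well-definedness step: you route it through the dictionary between oriented form classes of content $m$ and discriminant $D$ and oriented invertible ideal classes of $\mathcal{O}_{D/m^2}$, extending both ideals to the common larger order $\mathcal{O}_{D/(m_1m_2)^2}$ and multiplying there. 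This is the modern approach (close in spirit to \cite{penner_geomofgauss} and to the module-theoretic treatment of \cite{butts-estes}), and it buys conceptual clarity and associativity for free; Pall's approach avoids all order-theoretic machinery at the cost of longer elementary manipulation. Be aware, though, that the entire substance of the theorem is concentrated in the ``cross-order bookkeeping'' you defer: one must check that the extended ideals can be represented with coprime norms (this is exactly where $\gcd(m_1,m_2)=1$ enters), that their product is an invertible $\mathcal{O}_{D/(m_1m_2)^2}$-module, and that the concordant product formula $a_1a_2x^2+bxy+cy^2$ computes the form attached to that product. As a plan this is sound, but as a proof it is not yet done.

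Two small corrections to the existence step. First, ``a primitive form represents infinitely many primes'' is false for negative definite forms (they take only negative values), and it is in any case much heavier than needed: the elementary fact that a primitive form \emph{properly represents some integer coprime to any prescribed modulus} (choose $(x,y)$ modulo each prime dividing the modulus so that $q(x,y)\not\equiv 0$) suffices to produce leading coefficients $a_1=\pm m_1\ell_1$, $a_2=\pm m_2\ell_2$ with $\gcd(a_1,a_2)=1$, and your CRT step is unaffected by signs. Second, when you write $\mathcal{O}_{D/m^2}$ you are implicitly using that $m$ divides the conductor of $D$; this is automatic (since $D/m^2$ is again a discriminant with the same fundamental part), but it deserves a sentence.
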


\begin{prop}\label{prop:actionwelldef}
The above composition law is associative when defined.
\end{prop}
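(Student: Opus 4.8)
The plan is to first pin down the meaning of ``when defined.'' Since the preceding proposition of Pall shows that the content is multiplicative under the composition law, the triple composition $(s_1 * s_2) * s_3$ is defined precisely when $\gcd(m_1,m_2) = 1$ and $\gcd(m_1 m_2, m_3) = 1$, that is, when $m_1, m_2, m_3$ are pairwise coprime; the same condition governs $s_1 * (s_2 * s_3)$. Thus it suffices to prove $(s_1 * s_2) * s_3 = s_1 * (s_2 * s_3)$ under the standing hypothesis that $m_1 = \content(s_1)$, $m_2 = \content(s_2)$, $m_3 = \content(s_3)$ are pairwise coprime and $\disc(s_i) = D$.

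The main step is to produce a \emph{simultaneously concordant triple} of representatives. Concretely, I would show that one can choose pairwise coprime integers $a_1, a_2, a_3$, together with integers $b$ and $c$ satisfying $b^2 - 4 a_1 a_2 a_3 c = D$, such that
\[
q_i = a_i x^2 + b xy + \Big(\prod_{j \neq i} a_j\Big) c\, y^2
\]
represents $s_i$ and has content exactly $m_i$, for $i = 1,2,3$. Granting this, the computation is immediate and symmetric: the forms $q_1$ and $q_2$ are concordant with common outer factor $a_3 c$, so by Pall's proposition $q_1 * q_2 = a_1 a_2 x^2 + bxy + a_3 c\, y^2$; this form and $q_3$ are again concordant (their leading coefficients $a_1 a_2$ and $a_3$ are coprime), whence
\[
(q_1 * q_2) * q_3 = a_1 a_2 a_3 x^2 + b xy + c\, y^2.
\]
The same reading with the grouping $q_2 * q_3$ taken first yields the identical form, so the two triple compositions agree. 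The content bookkeeping is consistent throughout, as $\gcd(m_1 m_2, m_3) = 1$ guarantees each intermediate composition is defined and that the final content is $m_1 m_2 m_3$.

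The crux is therefore the existence of such a triple, which I expect to be the main obstacle. I would establish it by a Chinese Remainder Theorem argument carried out prime by prime, exploiting that pairwise coprimality of the contents means each prime $p$ divides at most one of $m_1, m_2, m_3$. At primes dividing none of the $m_i$ all three forms are to be primitive, which is the classical Dirichlet--Dedekind situation of finding a concordant system; at a prime $p \mid m_i$ one arranges $p \mid a_i$ (and $p \mid b$) while keeping $p \nmid a_j$ for $j \neq i$, so that exactly $q_i$ acquires the required $p$-content and the other two remain $p$-primitive. The pairwise coprimality of the $m_i$ is precisely what makes these local prescriptions compatible, so assembling them via CRT---and adjusting $b$ to a common value and $c$ to fix the discriminant---produces the desired representatives. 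Alternatively, one may reduce to associativity of primitive Gauss composition (already known, as $\mathcal{G}^+_{D/m^2}$ is a group) through the content-rescaling bijections $[q] \mapsto [q/m]$ onto the primitive class group of discriminant $D/m^2$; but reconciling the three distinct discriminants $D/m_i^2$ again forces the same coprimality analysis, so the concordant-triple route seems the most direct.
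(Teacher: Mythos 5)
Your proof is correct, but it takes a genuinely different route from the paper's: the paper disposes of associativity in one line by citing Theorem~3.5 of \cite{butts-estes}, which proves the statement for forms over an arbitrary integral domain, whereas you give a direct, self-contained argument via the classical ``united forms'' device. Your reduction of ``when defined'' to pairwise coprimality of the contents is right (using the multiplicativity of content from Pall's proposition), and once a simultaneously concordant triple $q_i = a_ix^2+bxy+\big(\prod_{j\ne i}a_j\big)c\,y^2$ with $a_1,a_2,a_3$ pairwise coprime is in hand, both bracketings visibly compute to $a_1a_2a_3x^2+bxy+cy^2$, with Pall's well-definedness guaranteeing that these computations really evaluate $(s_1*s_2)*s_3$ and $s_1*(s_2*s_3)$. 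The one step you leave as a sketch --- existence of the triple --- does go through, and more cleanly than prime-by-prime: choose $a_1\neq 0$ primitively represented by $q_1$ and coprime to $m_2m_3$ (possible because $q_1/m_1$ is primitive and a primitive form primitively represents an integer coprime to any prescribed modulus), then $a_2$ represented by $q_2$ and coprime to $a_1m_3$, then $a_3$ represented by $q_3$ and coprime to $a_1a_2$; translate each middle coefficient within its class modulo $2a_i$ and glue by the generalized Chinese Remainder Theorem, the required compatibility modulo $2$ being automatic since each $b_i\equiv D\pmod 2$, and the pairwise coprimality of the $a_i$ giving $4a_1a_2a_3\mid b^2-D$ so that $c$ is integral. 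What the paper's citation buys is generality and brevity; what your argument buys is an explicit, elementary proof in the same spirit as the Dirichlet composition the paper already invokes, at the cost of the representation lemma and the CRT bookkeeping.
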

\begin{proof}
This is proved in \cite[Thm.~3.5]{butts-estes} in the general case where $\mathbb{Z}$ is replaced with a general integral domain.
\end{proof}

\begin{cor}\label{cor:modconductor}
For any integer $m$ with $m^2 \mid D$, the group $\mathcal{G}_D^+$ of equivalence classes of primitive IBQFs of discriminant $D$ acts on the set of equivalence classes of IBQFs of discriminant $D$ and content $m$ by composition.
Moreover, there is a surjective homomorphism $\mathcal{G}_D^+ \to \mathcal{G}_{D/m^2}^+, t \mapsto \tilde{t}$
with the following compatibility rule:
for any $t \in \mathcal{G}_D^+$ and for any $s \in \mathcal{Q}_D^+$ of content $m$ we have
\begin{align}\label{eq:reductionComp}
    \tfrac{1}{m}(t * s) = \tilde{t} * (\tfrac{1}{m}s).
\end{align}
\end{cor}

\begin{proof}
The first part is immediate from Proposition~\ref{prop:actionwelldef}.
For the second part, the map associating to $t \in \mathcal{G}_{D}^+$ the class $\tilde{t} = \frac{1}{m} (t \ast (m\,1_{D/m^2})) \in \mathcal{G}_{D/m^2}^+$ satisfies the requirements -- see \cite[p.~2706]{penner_geomofgauss}.
\end{proof}

We now have everything we need to state and prove the version of Theorem~\ref{thm:newthm} without the primitivity assumption. 
This is equivalent to Theorem~\ref{thm:gausscompositionviaKleinvectors}.

\begin{theorem}\label{thm:nonprimthm}
For any $L=L_{a_1,a_2} \in \mathcal{L}_{\neq 0}$ with $a_1, a_2\in \Lambda$ pair-primitive we have
\begin{align*}
      [q_L] = \overline{[q_{a_1}]}*[q_{a_2}].
\end{align*}
Moreover, the content of $[q_L]$ is the product of the contents of $q_{a_1},q_{a_2}$.
If $a_1,a_2$ are primitive, then $[q_L] = [q_{a_1}]^{-1}*[q_{a_2}]$ is primitive.  In particular, this recovers the primitive case, i.e., Theorem~\ref{thm:newthm}.
\end{theorem}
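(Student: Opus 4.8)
The plan is to reduce Theorem~\ref{thm:nonprimthm} to the already-established primitive case of Theorem~\ref{thm:newthm} by factoring out the contents of $a_1$ and $a_2$. Write $m_1 = \content(q_{a_1})$ and $m_2 = \content(q_{a_2})$, so that $a_1 = m_1 a_1'$ and $a_2 = m_2 a_2'$ with $a_1', a_2' \in \Lambda$ primitive. Since $a_1, a_2$ are pair-primitive, we have $\gcd(m_1, m_2) = 1$. First I would observe that the defining equation $a_1 x = x a_2$ in \eqref{eq:nielK} is \emph{not} preserved under scaling each $a_i$ independently, so some care is needed: the plane $L_{a_1,a_2}$ is not literally equal to $L_{a_1',a_2'}$. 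However, the determinants satisfy $\det(a_i) = m_i^2 \det(a_i')$, and for $L_{a_1,a_2}$ to have nonzero discriminant we need $\det(a_1) = \det(a_2)$, which forces $m_1^2\det(a_1') = m_2^2\det(a_2')$. The cleanest route avoids tracking the plane through scaling and instead argues at the level of quadratic forms.

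The key step is to establish the content formula $\content(q_L) = m_1 m_2$ directly. I would compute this using the explicit formula \eqref{eq:detL_VS_det_ai}, namely $-\disc(q_L) = \det(a_i(L))$, together with a local (prime-by-prime) analysis of the defining equations for $L$. For a prime $p$, pair-primitivity guarantees that $p$ does not divide both $m_1$ and $m_2$; working $p$-adically one determines the largest power of $p$ dividing every coefficient of $q_L$ and shows it equals the $p$-part of $m_1 m_2$. This is the step I expect to be the main obstacle, since it requires a genuine local computation with the module $L_{a_1,a_2}$ rather than a formal manipulation of classes; the subtlety is that the coefficients of $q_L$ depend on the chosen basis of $L$ and one must extract the gcd intrinsically.

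Once the content is controlled, I would write $[q_{a_1}] = m_1 \cdot [q_{a_1'}]$ and $[q_{a_2}] = m_2 \cdot [q_{a_2'}]$ as classes of content $m_1$ and $m_2$ respectively, and interpret $\overline{[q_{a_1}]} * [q_{a_2}]$ via the Dirichlet composition law of Proposition~\ref{pall-nonprimitive} (the cited \cite{pall-nonprimitive}, Theorem 2), which is well-defined precisely because $\gcd(m_1, m_2) = 1$. The primitive part of this composition is governed by $\overline{[q_{a_1'}]} * [q_{a_2'}]$, which by Theorem~\ref{thm:newthm} equals $[q_{L'}]$ for the primitive plane $L' = L_{a_1', a_2'}$. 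The final step is to match the content-$m_1 m_2$ class $[q_L]$ with the content-$m_1 m_2$ class $\overline{[q_{a_1}]} * [q_{a_2}]$: both have content $m_1 m_2$ and the same discriminant, and both reduce to $[q_{L'}]$ after dividing out by their content, using the corollary that $\mathcal{G}_D^+$ acts on content-$m$ classes. Since a content-$m_1m_2$ class of discriminant $D$ is determined by its primitivization as a $\mathcal{G}_D^+$-orbit point together with the content, this identifies the two classes and yields $[q_L] = \overline{[q_{a_1}]} * [q_{a_2}]$, with the primitive case recovering Theorem~\ref{thm:newthm} verbatim.
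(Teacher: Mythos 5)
Your reduction to the primitive case breaks down at its first structural step. Writing $a_i = m_i a_i'$ with $a_i'\in\Lambda$ primitive, the determinants satisfy $\det(a_i') = \det(a_i)/m_i^2 = -D/m_i^2$. Since the two traceless matrices $a_1', a_2'$ then have \emph{different} determinants whenever $m_1 \neq m_2$, they have different characteristic polynomials, are not conjugate over $\Q$, and the module $L' = L_{a_1',a_2'} = \{x : a_1'x = xa_2'\}$ is zero. So the ``primitive plane $L'$'' to which you want to apply Theorem~\ref{thm:newthm} does not exist (you noted that $L_{a_1,a_2}\neq L_{a_1',a_2'}$ but not that the latter is trivial). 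For the same reason the expression $\overline{[q_{a_1'}]}*[q_{a_2'}]$ is not defined: $q_{a_1'}$ and $q_{a_2'}$ have discriminants $D/m_1^2 \neq D/m_2^2$, so they live in class sets of different discriminants and cannot be Gauss-composed. Consequently the final matching step --- ``both reduce to $[q_{L'}]$ after dividing out by their content'' --- compares two objects with a third that is undefined; moreover, the primitivization of the Pall--Dirichlet composition of a content-$m_1$ and a content-$m_2$ form of discriminant $D$ is a primitive form of discriminant $D/(m_1m_2)^2$, and there is no simple formula expressing it as a composition of the primitivizations of the factors. The only case in which your reduction is coherent is $m_1=m_2$, which together with pair-primitivity forces $m_1=m_2=1$, i.e.\ the already-known primitive case.

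The paper does not reduce to the primitive case at all: it reruns the two proofs of Theorem~\ref{thm:newthm} verbatim in the coprime-content setting. For the Dirichlet route, Pall's theorem guarantees concordant representatives $\overline{q_{a_1}} = \alpha_1x^2+\beta xy+\gamma_1 y^2$, $q_{a_2}=\alpha_2x^2+\beta xy+\gamma_2y^2$ with $\gcd(\alpha_1,\alpha_2)=1$ exist even for non-primitive forms of coprime content, and the same explicit basis $v_1=\SmMat{\alpha_1}{0}{\beta}{\alpha_2}$, $v_2 = \SmMat{0}{-1}{\gamma_2/\alpha_1}{0}$ of $L$ exhibits $q_L$ as the Dirichlet composition directly; for the cube route, Proposition~\ref{prop:cubeDirichlet} and Lemma~\ref{lem:flipthingsaround} supply the coprime-content versions of the cube-law identities (and the content assertion falls out of the equivalence of the cube law with Dirichlet composition, with no local analysis needed). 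If you want to salvage your outline, you should abandon the factorization of the Klein vectors and instead work with concordant representatives of $\overline{[q_{a_1}]}$ and $[q_{a_2}]$ at discriminant $D$ throughout.
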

\begin{proof}
This is analogous to the proof of Theorem~\ref{thm:newthm} except that one can no longer take the inverse of an ideal class.
The proof using Dirichlet composition goes trough exactly as before.
The proof using the Bhargava cube works the same after some small preparations, which we provide in Proposition~\ref{prop:cubeDirichlet}
and
Lemma~\ref{lem:flipthingsaround}.
\end{proof}

\begin{prop}\label{prop:cubeDirichlet}
    Let $A$ be a Bhargava cube producing a triple $(q_1, q_2, q_3)$ of quadratic forms.  Suppose that $\gcd(\content(q_2), \content(q_3)) = 1$. Then $[q_2]*[q_3] = \overline{[q_1]}$.

    Conversely, if $[q_2]*[q_3] = \overline{[q_1]}$ and  $\gcd(\content(q_2), \content(q_3)) = 1$ then there exists a Bhargava cube producing the triple of forms $(q_1, q_2, q_3)$.
\end{prop}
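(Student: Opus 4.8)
The plan is to deduce both directions from a single model computation in the concordant case, using the $\SL_2(\Z)^3$-action on cubes to pass between the general situation and that model. The basic fact I would record first is the equivariance: $\SL_2(\Z)^3$ acts on cubes, and since $q_i=-\det(xM_i-yN_i)$ for the $i$-th pair of slices $(M_i,N_i)$, the $i$-th factor recombines $(M_i,N_i)$ by a unimodular substitution (acting on $q_i$ in the standard way), while it acts on the slices of $q_j$ for $j\neq i$ by a common unimodular multiplication on one side, leaving $\det(xM_j-yN_j)$ and hence $q_j$ unchanged. Thus the three $\SL_2(\Z)$-classes attached to a cube can be modified independently, and since both sides of the identity $[q_2]*[q_3]=\overline{[q_1]}$ depend only on classes, the statement of the proposition is invariant under this action.

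Next I would set up the model case. Because $\content(q_2)$ and $\content(q_3)$ are coprime, the preceding proposition (Dirichlet composition, \cite{pall-nonprimitive}) supplies concordant representatives $\hat q_2 = a_2 x^2 + b xy + a_3 c\, y^2$ and $\hat q_3 = a_3 x^2 + b xy + a_2 c\, y^2$ of the given classes, with composition $q_c = a_2 a_3 x^2 + b xy + c y^2$. I would then exhibit an explicit cube $A_c$---this is essentially \cite[Eq.~(41)]{bhargava_composition}---whose three slice-forms are exactly $\hat q_2$, $\hat q_3$ and $\overline{q_c}$, checking the three determinantal formulas directly. Pall's proposition then reads $[\hat q_2]*[\hat q_3]=[q_c]=\overline{[\overline{q_c}]}$, so $A_c$ already verifies the claimed identity.

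The converse direction follows by transport. Given $q_1,q_2,q_3$ with $[q_2]*[q_3]=\overline{[q_1]}$, build $A_c$ from concordant representatives of $[q_2]$ and $[q_3]$; then $[q_1]=\overline{[q_c]}$ forces $q_1$ to be $\SL_2(\Z)$-equivalent to the first slice-form $\overline{q_c}$ of $A_c$, while $q_2,q_3$ are equivalent to $\hat q_2,\hat q_3$. Choosing substitutions realizing these three equivalences and applying the corresponding element of $\SL_2(\Z)^3$ to $A_c$ produces, by the factor-wise action above, a cube whose forms are exactly $(q_1,q_2,q_3)$. For the forward direction I would instead use the action to move $q_2,q_3$ within their classes onto the concordant pair $\hat q_2,\hat q_3$ while leaving $q_1$ untouched, and then argue that any cube with this exact concordant pair is carried to $A_c$ by an element of $\SL_2(\Z)^3$ fixing $\hat q_2,\hat q_3$ (hence acting only on $q_1$ by substitution); this pins down $[q_1]=\overline{[q_c]}=\overline{[q_2]*[q_3]}$.

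The step I expect to be the main obstacle is precisely this last normalization in the forward direction: showing that the class of $q_1$ is determined by the concordant pair $(\hat q_2,\hat q_3)$, i.e.\ that the remaining cube entries can be cleared by row/column operations stabilizing the two concordant forms, rather than merely landing on some other cube with the same $q_2,q_3$. Coprimality of the contents is what makes these clearing operations possible---it is the same arithmetic input underlying Pall's concordant normal form---so I would keep careful track of where that hypothesis enters. Throughout I would also carry the orientation bookkeeping explicitly (the placement of the bar in $\overline{q_c}$), since the primitive case shows these signs are easy to misplace.
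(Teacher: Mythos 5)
Your outline is correct and follows essentially the same route as the paper, whose entire proof is a citation to the Appendix of \cite{bhargava_composition}: your plan (exploit the $\SL_2(\Z)^3$-equivariance of the three slice-forms, exhibit the explicit cube attached to a concordant pair, transport for the converse, and normalize for the forward direction) is precisely the skeleton of that appendix argument, extended to the coprime-content setting via the Dirichlet/Pall composition recalled just before the proposition. The step you flag as the main obstacle --- that once $q_2,q_3$ are in concordant normal form the class of $q_1$ is pinned down by clearing the remaining cube entries, using coprimality of the leading coefficients --- is exactly the content the paper delegates to Bhargava, so no new idea is needed beyond what you describe.
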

\begin{proof}
    This follows by the argument in the Appendix of \cite{bhargava_composition} showing the equivalence of Bhargava's cube law with Dirichlet composition.
\end{proof}

We now state a lemma about how this composition law interacts with the maps $[q] \mapsto [\overline{q}]$ and $[q] \mapsto [-q]$.

\begin{lem}\label{lem:flipthingsaround}
If $[q_1] = [q_2]*[q_3]$, then 
\begin{enumerate}[a)]
    \item $[\overline{q_1}] = [\overline{q_2}] * [\overline{q_3}]$
    \item $[-q_1] = [\overline{q_2}] * [-q_3] = [-q_2] * [\overline{q_3}]$ 
    \item $[-\overline{q_1}] = [q_2]*[-\overline{q_3}] = [-\overline{q_2}]*[q_3]$
\end{enumerate}
\end{lem}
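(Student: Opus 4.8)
The plan is to reduce everything to the explicit concordant-form description of Dirichlet composition recorded in the Pall proposition above. First I would note that each of the three coefficient operations appearing in the lemma acts transparently on an IBQF $ax^2+bxy+cy^2$: orientation reversal sends it to $ax^2-bxy+cy^2$ (it flips the sign of $b$), negation sends it to $-ax^2-bxy-cy^2$, and the composite $-\overline{\cdot}$ sends it to $-ax^2+bxy-cy^2$ (it negates $a$ and $c$ but fixes $b$). In particular all three operations preserve the content of a form, so under the hypothesis $\gcd(\content(q_2),\content(q_3))=1$ that makes $[q_2]*[q_3]$ defined, each composition appearing on the right-hand sides of (a)--(c) is again defined.

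Next I would put $q_2,q_3$ into concordant form, writing $q_2=a_2x^2+bxy+a_3cy^2$ and $q_3=a_3x^2+bxy+a_2cy^2$ with $\gcd(a_2,a_3)=1$, so that $q_1=q_2*q_3=a_2a_3x^2+bxy+cy^2$. The core of the argument is then the observation that applying a sign operation to the pair $(q_2,q_3)$ produces a pair that is \emph{again} concordant, after relabelling the concordance parameters $(a_2,a_3,b,c)$ with appropriate signs. For part (a), replacing $b$ by $-b$ keeps $\overline{q_2},\overline{q_3}$ concordant with the same $a_2,a_3,c$, and the composition is $a_2a_3x^2-bxy+cy^2=\overline{q_1}$. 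For the first equality in (b), I would check that $\overline{q_2}=a_2x^2-bxy+a_3cy^2$ and $-q_3=-a_3x^2-bxy-a_2cy^2$ are concordant with parameters $(a_2,-a_3,-b,-c)$ (using that $a_2$ and $-a_3$ stay coprime), so their composition is $-a_2a_3x^2-bxy-cy^2=-q_1$. Part (c) is identical with parameters $(a_2,-a_3,b,-c)$, giving composition $-a_2a_3x^2+bxy-cy^2=-\overline{q_1}$.

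Finally, the second equality in each of (b) and (c) follows by symmetry: Dirichlet composition is commutative, so swapping the roles of $q_2$ and $q_3$ in the computations above interchanges the two displayed expressions. The only genuine bookkeeping---and the one place to be careful---is confirming that after a sign change the two forms still meet the concordance conditions: that their middle coefficients still agree, that their leading coefficients remain coprime, and that the cross-relation between leading and constant coefficients holds with a consistently signed common factor. Once the relabelled parameters are identified these are immediate. I note that the same three identities could instead be deduced from Proposition~\ref{prop:cubeDirichlet} by applying the evident sign-reversal symmetries of the Bhargava cube to the triple $(q_1,q_2,q_3)$, but the concordant-form computation is self-contained and avoids re-deriving how the cube symmetries act on the associated forms.
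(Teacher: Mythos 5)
Your proof is correct, but it takes a different route from the one the paper gives for this lemma. The paper's proof works entirely with Bhargava cube symmetries: part (a) comes from reflecting a cube about its center, part (b) from the observation that negating one layer of a cube replaces one associated form $q_i$ by $\overline{q_i}$ and negates the other two, and part (c) by combining the two. Your argument instead reduces everything to the explicit concordant-form description of Dirichlet composition and tracks how the three sign operations act on the concordance parameters $(a_2,a_3,b,c)$; I checked your parameter bookkeeping for (a), (b), and (c) and it is right, including the points that genuinely need care (coprimality of the relabelled leading coefficients, agreement of the middle coefficients, and the signed cross-relation between leading and constant terms), as well as the preliminary remark that the operations preserve content, so the compositions on the right-hand sides remain defined in the non-primitive setting where the lemma is actually applied. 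It is worth knowing that the paper itself flags this dichotomy: the identity in part (c) is proved by exactly your concordant-form computation inside the proof of Theorem~\ref{thm:newthm}, and the authors state that the proof of Lemma~\ref{lem:flipthingsaround} is deliberately given ``using the Bhargava cube'' as an alternative. So your proposal is essentially the authors' first method extended uniformly to all three parts. The trade-off is the one you already identify at the end: the cube proof avoids choosing concordant representatives and exhibits the identities as consequences of visible symmetries of a single object, while your computation is self-contained, elementary, and does not require first establishing how the cube symmetries permute and twist the three associated forms.
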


Note that a version of the lemma was already used and proven in the proof of Theorem~\ref{thm:newthm}.
Here, we give a different proof using the Bhargava cube.

\begin{proof}
    Part a) follows from the fact that reflecting a Bhargava cube around its center produces another Bhargava cube.

   For part b), observe that multiplying one of two layers of a Bhargava cube by $-1$ replaces the quadratic form $q_i$ coming from that splitting by $\overline{q_i}$, while replacing the quadratic forms coming from the other two splittings by their negatives.

   Doing this to the Bhargava cube producing the triple $(\overline{q_1}, q_{2}, q_{3})$, we get a Bhargava cube producing $(-\overline{q_1}, \overline{q_2}, -q_{3})$, so $[-q_1] = [\overline{q_2}] * [-q_3]$, and the other result follows symmetrically.

   Finally, part c) follows from applying part a) to the equalities in part b).
\end{proof}

%\begin{proof}
%This is analogous to the proof of Theorem~\ref{thm:newthm} except that one can no longer take the inverse of an ideal class.

%As there, we consider the Bhargava cube with sides $v_1,v_2$ with $\Phi(v_1,v_2)=(a_1,a_2)$. By the same calculation as in the proof of Theorem~\ref{thm:newthm}, its three splittings give the following three quadratic forms representing the classes
%\[
%[q_1] = [-q_L(-x, y)] = [-(\overline{q_L})], \quad [q_2] = [-q_{a_2}(y, -x)] = [-q_{a_2}], \quad [q_3] = [q_{a_1}].
%\]

%Now $\gcd(\content(q_2), \content(q_3)) = \gcd(\content(q_{a_2}), \content(q_{a_1})) = 1$ by pair-primitivity, so we may apply  Proposition~\ref{prop:cubeDirichlet} to obtain
%\[
 %[-q_{a_2}] * [q_{a_1}] = [-q_L].
%\]
%We now apply Lemma~\ref{lem:flipthingsaround} to obtain the desired equality
%\[
%[q_{a_2}] * \overline{[q_{a_1}]} = [q_L].
%\]
%The statement about the contents  follows from the statement about the contents in Proposition~\ref{prop:cubeDirichlet}.
%\end{proof}

\subsection{Consequences for $L^\perp$ and $L^\pperp$}

We now deduce expressions for $q_{L^\perp}$ and $q_{L^\pperp}$ using Theorem~\ref{thm:nonprimthm}. 

\begin{cor}\label{cor:nonprimLperpLpperp}
 For any $L=L_{a_1,a_2} \in \mathcal{L}_{\neq 0}$ we have $[q_{L^\perp}] = \overline{[-q_{a_1}]}*[q_{a_2}]$. Furthermore, if $L$ is symplectic,
    \begin{align*}
        [q_{L^\pperp}] &= [-\overline{q_{L^\perp}}] =  [q_{a_1}]*[q_{a_2}].
\end{align*}
Moreover, for primitive $a\in \Lambda$, we have
$[q_{\langle \id, a\rangle^\perp}] = -[q_{a}]^2$.
\end{cor}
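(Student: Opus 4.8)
The plan is to apply the two preceding statements in this corollary to the special plane $L = \langle \id, a\rangle$, i.e.\ the plane spanned by the identity matrix $\id$ and a primitive traceless matrix $a \in \Lambda$. First I would compute the Klein vectors $a_1(L)$ and $a_2(L)$ of this plane directly from the definition \eqref{eq:DefOfKleinMap} using the basis $(v_1, v_2) = (\id, a)$. Since $\overline{\id} = \id$ and $a$ is traceless (so $\overline{a} = -a$ and $\trace(a) = 0$), the products $v_1 \overline{v_2} = \id \cdot \overline{a} = -a$ and $\overline{v_2} v_1 = \overline{a}\cdot \id = -a$ both equal $-a$, and both traces $\trace(v_1\overline{v_2})$, $\trace(\overline{v_2}v_1)$ vanish. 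Thus I expect $a_1(L) = a_2(L) = -2a$, or after accounting for the pair-primitivity normalization built into $\mathcal{K}_{\neq 0}$, the associated forms satisfy $[q_{a_1(L)}] = [q_{a_2(L)}] = [q_a]$ up to the factor-of-two scaling that the Gross lattice identification absorbs.

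The main subtlety I anticipate is bookkeeping the factor of $2$ and the pair-primitivity condition. The Klein vectors as computed from the naive basis are $-2a$, which is \emph{not} pair-primitive; by Proposition~\ref{prop:KleinBasicProp}\eqref{item:ResOrient} and the extension of $\Phi$ to rational bases, the genuine Klein vectors $a_1(L), a_2(L) \in \mathcal{K}_{\neq 0}$ are the primitive representatives, so I must verify that $L = \langle \id, a\rangle$ really equals $L_{a,a}$ (or $L_{-a,a}$ depending on orientation) as an element of $\mathcal{L}_{\neq 0}$. Concretely, $L_{a,a} = \{x : ax = xa\}$ is the centralizer of $a$, which for a non-central traceless $a$ is exactly the $2$-dimensional commutative algebra $\Q(a) \cap \quat(\Z)$, containing both $\id$ and $a$; this confirms the identification. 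The orientation must then be checked against the convention in Proposition~\ref{prop:KleinBasicProp}\eqref{item:PhiInv}, which will fix whether we land at $a_1 = a_2 = a$ or pick up signs.

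Once the identification $L = L_{a,a}$ with $a_1(L) = a_2(L) = a$ (both primitive) is established, I would simply invoke the first assertion of this very corollary, namely $[q_{L^\perp}] = \overline{[-q_{a_1}]} * [q_{a_2}]$ with $a_1 = a_2 = a$. This gives
\[
[q_{\langle \id, a\rangle^\perp}] = \overline{[-q_a]} * [q_a].
\]
Using Lemma~\ref{lem:flipthingsaround} together with the facts that $\overline{\,\cdot\,}$ is inversion in $\mathcal{G}_D^+$ and that $[-q]$ relates to $[q]$ via the class-group identities recorded in \S\ref{sec:notation}, I would simplify $\overline{[-q_a]} * [q_a]$. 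The expected outcome $-[q_a]^2$ (where the overall sign denotes the class of the negated form) should fall out once the interplay between negation and orientation-reversal is tracked: writing $\overline{[-q_a]} = [-\overline{q_a}]$ and composing with $[q_a]$, the orientation-reversals on the two factors combine to invert, while the negation survives, yielding the square of $[q_a]$ with a sign.

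The step I expect to be the genuine obstacle is \emph{not} the algebra of the class-group relations, which Lemma~\ref{lem:flipthingsaround} handles cleanly, but rather pinning down the correct orientation and the factor-of-two normalization when identifying $\langle \id, a\rangle$ with $L_{a,a}$ in $\mathcal{L}_{\neq 0}$. A sign error there propagates directly into whether the answer is $-[q_a]^2$ or $[q_a]^2$ or their inverses, so I would treat the orientation convention from Proposition~\ref{prop:KleinBasicProp}\eqref{item:PhiInv} with care, using the explicit test via $\det g$ for a rational generator $g$ of the centralizer.
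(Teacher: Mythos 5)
Your strategy for the last assertion is essentially the paper's: identify $\langle \id,a\rangle$ with a plane of the form $L_{\pm a,\pm a}$, pass to the orthogonal complement via the convention \eqref{eq:orthcompl-oriented}, and then apply the first formula of the corollary together with Lemma~\ref{lem:flipthingsaround}. Two issues, though. First, your proposal only proves the third assertion: the formulas for $[q_{L^\perp}]$ and $[q_{L^\pperp}]$ are \emph{part of} the corollary, not available inputs. (They do follow quickly --- the first by applying Theorem~\ref{thm:nonprimthm} to $L^\perp = L_{-a_1,a_2}$ and using $q_{-a_1}=-q_{a_1}$, the second from Proposition~\ref{prop:KleinForSymp} and Lemma~\ref{lem:flipthingsaround} --- but that needs to be said.)

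Second, and more seriously, there is a sign error that changes the final answer. Your own computation gives $v_1\overline{v_2}=\overline{v_2}v_1=-a$, hence unnormalized Klein vectors $(-2a,-2a)$ and, after rescaling by the positive factor $1/2$, the identification $\langle \id,a\rangle = L_{-a,-a}$ --- \emph{not} $L_{a,a}$, which is the same underlying lattice with the opposite orientation (and your claim that $[q_{a_i(L)}]=[q_a]$ is likewise off by a sign: it is $[-q_a]$). You nevertheless proceed with $a_1=a_2=a$ and arrive at $\overline{[-q_a]}*[q_a]=[-\overline{q_a}]*[q_a]$, which by Lemma~\ref{lem:flipthingsaround}(c) equals $-\overline{[q_a]^2}=-[q_a]^{-2}$, not $-[q_a]^2$; these differ whenever $[q_a]^4\neq 1$. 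The correct route, consistent with your own computation and with the paper, is $\langle\id,a\rangle^\perp=(L_{-a,-a})^\perp=L_{a,-a}$, whence $[q_{\langle\id,a\rangle^\perp}]=\overline{[q_a]}*[-q_a]=-[q_a]^2$ by Lemma~\ref{lem:flipthingsaround}(b). You do correctly flag the orientation as the danger point, but the proof as written commits to the wrong choice rather than resolving it.
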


In Corollary~\ref{cor:nonprimLperpLpperp}, $\langle \id, a\rangle^\perp$ is taken to be (the integer points of) the orthogonal complement of the plane $\langle \id, a\rangle \cap \quat(\Z)$ with orientation given by $\id,a$.

\begin{proof}
The formula for $[q_{L^\perp}]$ 
follows from applying Theorem~\ref{thm:nonprimthm} to $L^\perp
{=}L_{-a_1, a_2}$ (see \eqref{eq:orthcompl-oriented}) and using $q_{-a_1} = -q_{a_1}$.
The formulas for $[q_{L^\pperp}]$ are obtained from Proposition~\ref{prop:KleinForSymp} and Lemma~\ref{lem:flipthingsaround}.
Alternatively, one may apply the formulas in Proposition~\ref{prop:KleinForSymp} for $a_1(L^\pperp)$ and $a_{2}(L^\pperp)$ in conjunction with Theorem~\ref{thm:nonprimthm}.
% For the second equality, recall that for a symplectic plane $L=L_{a_1,a_2}$ we have by Proposition \ref{prop:KleinForSymp} that  $L^\pperp=\Phi(-a_1,\SmMat{1}{-\alpha}{-\gamma}{-1})$ where $a_2=\SmMat{1}{\alpha}{\gamma}{-1}$ where $\alpha,\gamma \in 2\Z$.
% Note that 
% \begin{align*}
%    q_{\SmMat{1}{-\alpha}{-\gamma}{-1}} = {-\frac{\alpha}{2} x^2 +xy + \frac{\gamma}{2} y^2} =-\overline{q_{a_2}}. 
% \end{align*}
%  We therefore have
% \begin{align*}
% [q_{L^\pperp}]=[\overline{q_{-a_1}}]*[-\overline{q_{a_2}}]=[q_{a_1}]*[q_{a_2}]
% \end{align*}
% by Theorem~\ref{thm:nonprimthm} and two applications of Lemma~\ref{lem:flipthingsaround}.
% Alternatively, one may use the proof of Proposition \ref{prop:KleinForSymp} relating bases of $L^\perp$ and $L^\pperp$ and then apply Lemma~\ref{lem:flipthingsaround}c).
The last equality follows from $\langle \id,a\rangle^\perp = (L_{-a,-a})^\perp = L_{a,-a}$ (cf.~\eqref{eq:orthcompl-oriented}) and Lemma~\ref{lem:flipthingsaround}b).
\end{proof}

% \begin{proof}[Proof of Theorem~\ref{thm:gausscompositionviaKleinvectors}]
% This follows from Theorem~\ref{thm:nonprimthm} and Corollary~\ref{cor:nonprimLperpLpperp}.
% \end{proof}

For a symplectic subspace $L=L_{a_1,a_2}$ we have by Lemma \ref{lem:SympIf1}  that $a_2$ is  automatically primitive and $[q_{a_2}]\in \mathcal{G}_D^+$. We therefore have the following.

\begin{cor}\label{cor:stabilizer}
    For a symplectic subspace $L=L_{a_1,a_2}$ we have 
    $$
[q_L]=[q_{L^\pperp}]\iff [q_{a_1}]=\overline{[q_{a_1}]},
    $$
    and
     $$
[q_L]=\overline{[q_{L^\pperp}]}\iff [q_{a_2}]^2\in \Stab_{\mathcal{G}_D^+}([q_L])\iff -[q_{a_2^\perp}]\in \Stab_{\mathcal{G}_D^+}([q_L]).
$$
where $\Stab_{\mathcal{G}_D^+}([q_L])$ is the set of elements $g$ in the group ${\mathcal{G}_D^+}$ such that $g[q_L]=[q_L]$.
In particular, if $q_L$ is primitive then
\begin{align*}
[q_L] \neq [q_{L^\pperp}] \iff [q_{a_1}]^2\neq 1,\quad
[q_L]\neq \overline{[q_{L^\pperp}]} \iff [q_{a_2}]^2\neq 1.
\end{align*}
\end{cor}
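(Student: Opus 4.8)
The plan is to reduce the entire statement to the two composition formulas already available: $[q_L] = \overline{[q_{a_1}]}*[q_{a_2}]$ from Theorem~\ref{thm:nonprimthm} and $[q_{L^\pperp}] = [q_{a_1}]*[q_{a_2}]$ from Corollary~\ref{cor:nonprimLperpLpperp}. The key structural fact I would use is that for a symplectic plane the second Klein vector $a_2$ is automatically primitive (Lemma~\ref{lem:SympIf1}), so $[q_{a_2}] \in \mathcal{G}_D^+$ is a genuine group element, invertible with inverse $\overline{[q_{a_2}]} = [q_{a_2}]^{-1}$. By contrast $[q_{a_1}]$ (equivalently $[q_L]$) may have content $m_1 = \content(a_1) > 1$, so I cannot invert it; every cancellation below must instead be routed through the action of the group $\mathcal{G}_D^+$ on the set of classes of content $m_1$. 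For the first equivalence I would compare the two formulas: $[q_L] = [q_{L^\pperp}]$ reads $\overline{[q_{a_1}]}*[q_{a_2}] = [q_{a_1}]*[q_{a_2}]$, and since translation by the primitive element $[q_{a_2}]$ is a bijection on content-$m_1$ classes, I may cancel it to obtain $\overline{[q_{a_1}]} = [q_{a_1}]$, which is exactly the stated criterion.

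For the second equivalence I would first rewrite the target using Lemma~\ref{lem:flipthingsaround}(a) and primitivity of $a_2$: $\overline{[q_{L^\pperp}]} = \overline{[q_{a_1}]*[q_{a_2}]} = \overline{[q_{a_1}]}*\overline{[q_{a_2}]} = \overline{[q_{a_1}]}*[q_{a_2}]^{-1}$. Writing $[q_{a_2}]^{-1} = [q_{a_2}]*[q_{a_2}]^{-2}$ identifies this with $[q_L]*[q_{a_2}]^{-2}$, so that $[q_L] = \overline{[q_{L^\pperp}]}$ is equivalent to $[q_L] = [q_L]*[q_{a_2}]^{-2}$, i.e.\ $[q_{a_2}]^{-2} \in \Stab_{\mathcal{G}_D^+}([q_L])$; since $\Stab_{\mathcal{G}_D^+}([q_L])$ is a subgroup this is the same as $[q_{a_2}]^{2} \in \Stab_{\mathcal{G}_D^+}([q_L])$. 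The equivalent reformulation $[q_{a_2}]^2 \in \Stab_{\mathcal{G}_D^+}([q_L]) \iff [q_{a_2^\perp}] \in \Stab_{\mathcal{G}_D^+}([q_L])$ I would then read off from the identity $[q_{\langle \id, a_2\rangle^\perp}] = -[q_{a_2}]^2$ of Corollary~\ref{cor:nonprimLperpLpperp}, combined with the subgroup closure of $\Stab_{\mathcal{G}_D^+}([q_L])$ under inversion and the relevant sign.

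Finally, for the \emph{in particular} clause I would specialize to primitive $q_L$. Then both $a_1$ and $a_2$ are primitive (their contents multiply to $\content(q_L) = 1$), so $\overline{[q_{a_i}]} = [q_{a_i}]^{-1}$; moreover $\mathcal{G}_D^+$ acts freely on itself, whence $\Stab_{\mathcal{G}_D^+}([q_L])$ is trivial. Consequently the criterion $\overline{[q_{a_1}]} = [q_{a_1}]$ collapses to $[q_{a_1}]^2 = 1$, and the membership $[q_{a_2}]^2 \in \Stab_{\mathcal{G}_D^+}([q_L])$ collapses to $[q_{a_2}]^2 = 1$. Inserting these simplifications into the two equivalences established above yields the two biconditionals displayed in the statement.

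The step I expect to be the main obstacle is the bookkeeping forced by the possibly non-primitive class $[q_{a_1}]$: because it cannot be inverted, the two core reductions must be carried out as statements about the $\mathcal{G}_D^+$-action on content-$m_1$ classes (injectivity of translation by the primitive element $[q_{a_2}]$, and the stabilizer of $[q_L]$) rather than by naive group cancellation, and each passage of $\overline{\cdot}$ past a Gauss composition must be justified through Lemma~\ref{lem:flipthingsaround}. Getting the interplay of orientation reversal, negation, and the stabilizer exactly right—particularly in the $[q_{a_2^\perp}]$ reformulation, where the sign coming from Corollary~\ref{cor:nonprimLperpLpperp} must be controlled—is the delicate part of the argument.
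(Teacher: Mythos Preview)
The paper states this as a corollary without proof, so your approach --- reading everything off Theorem~\ref{thm:nonprimthm} and Corollary~\ref{cor:nonprimLperpLpperp} and using that $a_2$ is automatically primitive --- is exactly what is intended, and your arguments for the first equivalence and for $[q_L]=\overline{[q_{L^\pperp}]}\iff [q_{a_2}]^2\in\Stab_{\mathcal{G}_D^+}([q_L])$ are correct.

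There is, however, a genuine gap in your treatment of the clause $[q_{a_2}]^2 \in \Stab_{\mathcal{G}_D^+}([q_L]) \iff [q_{a_2^\perp}] \in \Stab_{\mathcal{G}_D^+}([q_L])$. You invoke $[q_{\langle\id,a_2\rangle^\perp}]=-[q_{a_2}]^2$ and then appeal vaguely to ``closure under inversion and the relevant sign'', but the sign does not disappear for free. By Lemma~\ref{lem:flipthingsaround}(b) one has $[-q]=[-1_D]\ast[q]^{-1}$ in $\mathcal{G}_D^+$, so $-[q_{a_2}]^2=[-1_D]\ast[q_{a_2}]^{-2}$, and hence the two stabilizer memberships differ exactly by whether $[-1_D]\in\Stab_{\mathcal{G}_D^+}([q_L])$, which is not automatic. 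Concretely, take $D=-23$, $q_{a_2}=1_D$, and $q_{a_1}$ any nontrivial primitive class; then $q_L$ is primitive, $[q_{a_2}]^2=1\in\Stab_{\mathcal{G}_D^+}([q_L])=\{1\}$, while $[q_{a_2^\perp}]=[-1_D]\neq 1$ is not in the stabilizer. So the printed equivalence is not correct as it stands; either $a_2^\perp$ is meant to denote something other than $\langle\id,a_2\rangle^\perp$, or there is a misprint. Your write-up should flag this rather than paper over it.

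A smaller point: your derivation of the ``in particular'' clause correctly gives $[q_L]=[q_{L^\pperp}]\iff[q_{a_1}]^2=1$ and $[q_L]=\overline{[q_{L^\pperp}]}\iff[q_{a_2}]^2=1$, whereas the displayed statement has $\neq$ on the left-hand sides. You claim your simplification ``yields the two biconditionals displayed''; it yields their negations. This is almost certainly a typo in the paper, but you should notice and say so rather than assert agreement.
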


\begin{remark}\label{rem:explicitcube}
Given two IBQF's $q_1,q_2$ the proofs of Theorems~\ref{thm:newthm} and \ref{thm:nonprimthm} show how one can find a Bhargava cube such that $q_1,q_2$ arise from slicings of that cube.
Take $a_1 = A(q_1)$, $a_2 = -A(\SmMat{0}{-1}{1}{0}.q_2)$ and compute the plane $L = L_{a_1,a_2}$.
Given any basis $v_1,v_2$ of $L$, the cube with $M_1=v_1,\,N_1=v_2$ (cf.~\cite[\S 2.1]{bhargava_composition}) is such that the given forms $q_1,q_2$ are obtained from slicings as desired.
\end{remark}

\begin{remark}[Relations of IBQFs]\label{rem:relations}
In the above (Theorem~\ref{thm:nonprimthm} and Corollary~\ref{cor:nonprimLperpLpperp}), we have established exact relations in the six tuple
\begin{align}
\big([q_{a_1}],[q_{a_2}],&[q_{L}],[q_{L^\perp}],[q_{\langle \id,a_1\rangle^\perp}],[q_{\langle \id,a_2\rangle^\perp}]\big)\nonumber\\
&= \big([q_{a_1}],[q_{a_2}],\overline{[q_{a_1}]}*[q_{a_2}],\overline{[-q_{a_1}]}*[q_{a_2}],-[q_{a_1}]^2,-[q_{a_2}]^2\big)\label{eq:fullrelations}
\end{align}
for any $L = L_{a_1,a_2}\in \Lcal_D$.
These are the tuples considered in the equidistribution results of \cite{AEW-2in4}.
\end{remark}

\section{Knot theory: realization of disjoint Seifert surfaces with prescribed Seifert forms}\label{sec:knottheorysetup}
In this section we set up the relevant notions from knot theory and prove Theorems~\ref{thmintro:main3d} and~\ref{thmintro:main4d}. In \S\ref{ss:setupSF} we recall the notion of Seifert forms and check that for closed surfaces $\Sigma\subset S^3$ of genus $2$ the Seifert form on $H_1(\Sigma;\Z)\cong\Z^4$ is, up to a choice of coordinates, the bilinear form $(\Z^4, U)$ studied in \S\ref{sec:2in4Matrices}. This is the reason that the form $U$ has relevance to the Seifert surface questions outlined in the beginning of the introduction.
In \S\ref{ss:realizationofSFs} we show that $2$-dimensional symplectic planes $L$ of $H_1(\Sigma;\Z)$ (with respect to the intersection form) correspond to the first homology of a genus $1$ surface $F \subset \Sigma$ whose boundary is a knot.
Moreover, the  symplectic complement $L^\pperp$ of $L$ is equal to the first homology of the complement $\Sigma \setminus F$.
We then show how the results about symplectic subspaces in $(\Z^4, U)$ and their complements from \S\ref{sec:2in4Matrices} imply the Seifert form realization result as stated in Theorem~\ref{thmintro:main3d}.
Finally, in \S\ref{ss:obstructionstoisotopyinB4}, we describe the intersection form on second homology of the double branched cover of Seifert surfaces that are pushed into $B^4$. We then use the $3$-dimensional Seifert form realization results to establish the existence of many pairs of genus 1 Seifert surfaces that are not ambiently isotopic after being pushed into $B^4$. 
In particular, we prove Theorem~\ref{thmintro:main4d}.
\subsection{Setup: Seifert form and Seifert surfaces}\label{ss:setupSF}
 We point to~\cite[Chap. 2]{Lickorish_97} for details on the Seifert form.
For an oriented compact smooth surface with (possibly empty) boundary $F\subset S^3$, we denote by \[U_F\colon H_1(F;\Z)\times H_1(F;\Z)\to \Z\] the Seifert form. Representing elements in $H_1(F;\Z)$ as curves, the Seifert form may be defined by $U_F([\alpha],[\beta])=\mathrm{lk}(\alpha,\beta^+)$. Here, $\mathrm{lk}$ denotes Gauss's linking number of disjoint curves in $S^3$, $\alpha$ and $\beta$ are curves in $F$ representing a given homology class, and $\beta^+$ is the result of pushing $\beta$ a small amount along a positive normal into $S^3\setminus F$; see the right-hand side of Figure~\ref{fig:Sigmaastd}. 

\begin{figure}[ht]
  \centering

  \def\svgscale{0.9}
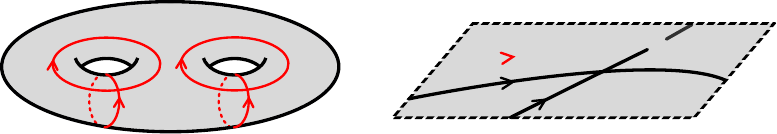
  \caption{Left: the surface $\Sigma_{\rm std}$ (gray) with simple closed curves $\alpha_1,\beta_1,\alpha_2,\beta_2$ (red) such that the corresponding homology classes form a basis. Right: Illustration of the positive push-off $\beta^+$ (red) of $\beta$ in $\Sigma$ (gray) needed for the definition of the Seifert form.}\label{fig:Sigmaastd}
\end{figure}

Writing $\overline{F}$ for the surface $F$ with reversed orientation, we observe that $U_{\overline{F}}(a,b) = U_F(b,a)$.
Recall that the antisymmetrization of $U_F$,
\[\theta_F\colon H_1(F;\Z)\times H_1(F;\Z)\to \Z,\, (a,b)\mapsto U_F(a,b)-U_F(b,a)=U_F(a,b)-U_{\overline{F}}(a,b),\]
is the intersection form on $H_1(F;\Z)$.
We denote by $Q_F$ the symmetrization of the Seifert form,
i.e.~$Q_F\colon H_1(F;\Z)\times H_1(F;\Z)\to \Z, (a,b)\mapsto U_F(a,b)+U_F(b,a)$.
We choose the notation $U_F$ for the Seifert form due to the following.
\begin{example}\label{ex:stdsurface}
Let $\Sigma_{\rm std}$ be the unknotted, oriented genus two closed subsurface of $S^3$ as depicted in Figure~\ref{fig:Sigmaastd}.
Then there exists a basis $\mathcal{B}=\left([\alpha_1],[\beta_1],[\alpha_2],[\beta_2]\right)$ of $H_1(\Sigma_{\rm std};\Z)$, where $\alpha_i$ and $\beta_i$ are as indicated in Figure~\ref{fig:Sigmaastd}, such that with respect to this basis the Seifert form is represented by the matrix
$U=\big(\begin{smallmatrix}
  0 & 1\\
  0 & 0
\end{smallmatrix}\big)\oplus\big(\begin{smallmatrix}
  0 & 1\\
  0 & 0
\end{smallmatrix}\big)$ from~\eqref{eq:BilFormU}.
\end{example}
In fact, every closed oriented surface of genus $g$ has as its Seifert form the $g$-fold direct sum $\big(\begin{smallmatrix}
  0 & 1\\
  0 & 0
\end{smallmatrix}\big)\oplus \cdots \oplus \big(\begin{smallmatrix}
  0 & 1\\
  0 & 0
\end{smallmatrix}\big)$.

\begin{lem}\label{lem:SFforclosedsurfaces}
Let $\Sigma$ be an oriented closed genus $g$ surface in $S^3$. 
Then there is a basis of $H_1(\Sigma;\Z)$ in which $U_\Sigma$ is represented by  $$\underbrace{\big(\begin{smallmatrix}
  0 & 1\\
  0 & 0
\end{smallmatrix}\big)\oplus \cdots \oplus \big(\begin{smallmatrix}
  0 & 1\\
  0 & 0
\end{smallmatrix}\big)}_{g}.$$
\end{lem}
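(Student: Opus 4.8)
The plan is to prove Lemma~\ref{lem:SFforclosedsurfaces} by reducing to the standard model surface $\Sigma_{\rm std}$ of Example~\ref{ex:stdsurface} via the classification of surfaces in $S^3$. The key point is that the Seifert form of a closed oriented surface is determined up to congruence by purely topological data, so it suffices to exhibit \emph{one} basis achieving the normal form and then invoke uniqueness of the genus-$g$ closed surface up to ambient isotopy.

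First I would recall the structural input: any oriented closed genus $g$ surface embedded in $S^3$ bounds a handlebody on at least one side (indeed, by the classification of Heegaard splittings / Schoenflies-type arguments in $S^3$, an unknotted genus $g$ surface is ambiently isotopic to $\Sigma_{\rm std}$). The crucial observation is that the Seifert form is an isotopy invariant: if $\Sigma$ and $\Sigma'$ are ambiently isotopic oriented surfaces in $S^3$, then the isotopy induces an isomorphism $H_1(\Sigma;\Z)\to H_1(\Sigma';\Z)$ carrying $U_\Sigma$ to $U_{\Sigma'}$, since linking numbers and positive push-offs are preserved by orientation-preserving ambient homeomorphisms of $S^3$. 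Thus for the \emph{unknotted} (standard) surface the claim is exactly Example~\ref{ex:stdsurface}.

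The substantive step is therefore handling a \emph{possibly knotted} embedded surface $\Sigma$. Here I would use the general algebraic fact that for any closed oriented surface $\Sigma\subset S^3$, the Seifert form $U_\Sigma$ satisfies $U_\Sigma(a,b)-U_\Sigma(b,a)=\theta_\Sigma(a,b)$, where $\theta_\Sigma$ is the intersection form (as noted just before the lemma), together with the key symmetry for a \emph{closed} surface: pushing off in the positive versus negative normal direction satisfies $\mathrm{lk}(\alpha,\beta^+)+\mathrm{lk}(\alpha^-,\beta)=0$-type relations that force $U_\Sigma$ to be congruent to its block normal form. Concretely, one chooses a symplectic basis $[\alpha_1],[\beta_1],\dots,[\alpha_g],[\beta_g]$ for $(H_1(\Sigma;\Z),\theta_\Sigma)$ and argues that the self-linking and cross-linking numbers can be normalized by an integral symplectic change of basis: since a closed surface separates $S^3$, each class bounds on one side, which forces the diagonal self-linking terms and the "lower-triangular" linking terms to vanish after adjustment, leaving precisely the block form $\bigl(\begin{smallmatrix}0&1\\0&0\end{smallmatrix}\bigr)$ on each handle.

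The hard part will be making precise the vanishing of the off-diagonal and diagonal linking contributions for a knotted embedding without simply asserting the surface is standard. I expect two routes: either (i) invoke that every closed oriented genus $g$ surface in $S^3$ is ambiently isotopic to the standard one --- this is the cleanest route and reduces everything to Example~\ref{ex:stdsurface} --- or (ii) give a direct algebraic normalization of $U_\Sigma$ using that a closed surface in $S^3$ bounds a compact region on each side, so that for classes $a$ bounding in the "outer" region the push-off linking numbers are controlled, yielding enough relations to run a symplectic Gram--Schmidt reduction to the stated normal form. I would present route (i) as the main argument, citing the classification of surfaces in $S^3$, and remark that the Seifert-form computation of Example~\ref{ex:stdsurface} then completes the proof, with the intersection-form symmetrization $\theta_\Sigma$ guaranteeing compatibility of the chosen basis with the symplectic structure used later in \S\ref{sec:knottheorysetup}.
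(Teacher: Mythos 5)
Your primary route (i) rests on a false premise: it is not true that every closed oriented genus $g$ surface in $S^3$ is ambiently isotopic to the standard one. Already for $g=1$ the boundary of a tubular neighbourhood of a nontrivial knot is a torus that is not ambiently isotopic to $\Sigma_{\rm std}$ (one complementary region is a knot exterior rather than a solid torus), and for higher genus the situation is only worse. There is no ``classification of surfaces in $S^3$'' of the kind you invoke, so the reduction to Example~\ref{ex:stdsurface} does not go through. What you correctly note --- that the Seifert form is an ambient-isotopy invariant --- only helps when the surface is unknotted.

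Your fallback route (ii) is the right idea, but its key step is misstated. It is not the case that ``each class bounds on one side'': for a general $a\in H_1(\Sigma;\Z)$, neither of the images of $a$ in $H_1(X;\Z)$ and $H_1(Y;\Z)$ need vanish, where $X,Y$ are the closures of the two complementary regions. The correct input, and the one the paper uses, is Mayer--Vietoris for $S^3=X\cup Y$: since $H_1(S^3;\Z)=H_2(S^3;\Z)=0$, the inclusions induce an isomorphism $H_1(\Sigma;\Z)\cong H_1(X;\Z)\oplus H_1(Y;\Z)$, and by ``half lives, half dies'' each summand has rank $g$. One then chooses a basis of simple closed curves $\alpha_1,\dots,\alpha_g,\beta_1,\dots,\beta_g$ with the $\alpha_i$ null-homologous in $X$ and the $\beta_i$ null-homologous in $Y$; the vanishing of $\mathrm{lk}(\alpha_i,\alpha_j^+)$, $\mathrm{lk}(\beta_i,\beta_j^+)$ and $\mathrm{lk}(\beta_i,\alpha_j^+)$ then follows by computing each linking number against a bounding surface contained in $X$ or in $Y$ and disjoint from the relevant push-off. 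This puts $U_\Sigma$ in the form $\big(\begin{smallmatrix}0&A\\0&0\end{smallmatrix}\big)$ with $A$ invertible (its antisymmetrization is the unimodular intersection form), and a final base change makes $A$ the identity. Your proposal never identifies this rank-$g$/rank-$g$ splitting, nor the actual reason the relevant linking numbers vanish, so as written the argument has a genuine gap.
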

\begin{proof} 
Write $S^3=X\cup Y$, where $X$ and $Y$ are 3-manifolds such that $X\cap Y=\Sigma$ (as sets) and such that $X$ and $Y$ have oriented boundary $\overline{\Sigma}$ and ${\Sigma}$, respectively. 
In particular, if $\gamma$ is a curve in $\Sigma$, then $\gamma^+$ is in the interior of $X$. 
Applying Mayer-Vietoris to this decomposition of $S^3$ implies that $H_1(\Sigma; \Z)\cong H_1(X;\Z)\oplus H_1(Y;\Z)$, where the isomorphism is induced by the sum of the homomorphisms induced by inclusion, since $H_2(S^3; \Z)$ and $H_1(S^3; \Z)$ (these are the terms before $H_1(\Sigma; \Z)$ and after $H_1(X;\Z)\oplus H_1(Y;\Z)$ in the long exact sequence) are trivial. 
Noting that $H_1(X;\Z)$ and $H_1(Y;\Z)$ have rank $g$ (e.g.~by the ``half lives, half dies'' principle that says that the homomorphisms $H_1(\Sigma, \Q)\to H_1(X, \Q)$ and $H_1(\overline{\Sigma}, \Q)\to H_1(Y, \Q)$ induced by the inclusions have rank $g$), there exists a basis
\[([\alpha_1],[\alpha_2],\dots, [\alpha_g],[\beta_1],\dots, [\beta_g])\] of $H_1(\Sigma; \Z)$ such that the $\alpha_i$ are closed curves that are null-homologous in $X$ and the $\beta_i$ are null-homologous in $Y$. Since primitive elements in $H_1(\Sigma;\Z)$ are representable by  simple closed curves we may and do assume that the $\alpha_i$ and $\beta_i$ are simple closed curves.
We note that for a simple closed curve $\alpha$ that is null-homologous (i.e.~the boundary of an oriented surface) in $X$ and a curve $\beta$ that is null-homologous in $Y$, one has $\mathrm{lk}(\alpha,\alpha^+)=\mathrm{lk}(\beta,\beta^+)=\mathrm{lk}(\beta,\alpha^+)=0$. (For this recall that $\mathrm{lk}(\delta, \gamma)$ equals the algebraic intersection of $\gamma$ with an oriented surface with boundary $\delta$, and note that we can choose surfaces for $\alpha$, $\beta$, and $\beta$ that are disjoint from $\alpha^+$, $\beta^+$, and $\alpha^+$, respectively.) %; in fact, we can take the surfaces to be in $X$, $Y$, and $X$, respectively.)
This implies that the matrix of $U_\Sigma$ with respect to this basis  has the last $g$ rows and the first $g$ columns all zero.
In other words, with respect to this basis, $U_\Sigma$ is given by $\big(
\begin{smallmatrix}
  0 & A\\
  0 & 0
\end{smallmatrix}\big)$,
where $0$ denotes the $g\times g$-matrix with entries $0$ and $A$ is a $g\times g$-matrix.
Noting that $A$ is invertible (since the antisymmetrization $\big(
\begin{smallmatrix}
  0 & A\\
  -A^T & 0
\end{smallmatrix}\big)$ represents the intersection form on $\Sigma$, which has determinant 1),
we can and do assume (by applying a base change if needed) that $A$ is the identity matrix.
This establishes that, with respect to the basis 
\[([\alpha_1],[\beta_1],[\alpha_2],[\beta_2],\dots, [\alpha_g], [\beta_g]),\] $U_\Sigma$ is, as desired, given by the matrix
\[\underbrace{\big(\begin{smallmatrix}
  0 & 1\\
  0 & 0
\end{smallmatrix}\big)\oplus \cdots \oplus \big(\begin{smallmatrix}
  0 & 1\\
  0 & 0
\end{smallmatrix}\big)}_{g}.\qedhere\]
\end{proof}

Given Lemma~\ref{lem:SFforclosedsurfaces}, it is no surprise that most commonly the Seifert form is considered for surfaces with non-empty boundary. A \emph{Seifert surface} $F$ (for a knot or link $K$) is an oriented compact non-empty smooth surface in $S^3$ without closed components (with oriented boundary $K$). If $F$ is a genus $1$ Seifert surface for a knot $K$, we write
\begin{align}\label{eq:s_F}
s_F = [U_F(a,a)x^2+(U_F(a,b)+U_F(b,a))xy +U_F(b,b)y^2]\in \mathcal{Q}^+_D
\end{align}
where $(a,b)$ is a symplectic basis for $H_1(F;\Z)$ (i.e.~a basis with respect to which the intersection form is given by $\big(\begin{smallmatrix}
  0 & 1\\
  -1 & 0
\end{smallmatrix}\big)$) and $D$ is the discriminant $(U_F(a,b)+U_F(b,a))^2-4U_F(a,a)U_F(b,b)$.
Note that $s_{\overline{F}} = \overline{s_F}$ since $(a,b)$ is a symplectic basis for  $H_1(F;\Z)$ if and only if  $(a,-b)$ is a symplectic basis for  $H_1(\overline{F};\Z)$.

\subsection{Realization of Seifert forms and proof of Theorem~\ref{thmintro:main3d}}\label{ss:realizationofSFs}

\begin{lem}\label{lem:realizationofL} 
Let $\Sigma$ be an oriented closed smooth genus 2 surface in $S^3$. For every symplectic plane $L$ of $H_1(\Sigma;\Z)$, there exists 
a non-contractible separating simple closed curve $K\subset \Sigma$ such that $L=H_1(F_K;\Z)\subset H_1(\Sigma;\Z)$, where $F_K$ is the closure of the component of $\Sigma\setminus K$ that induces the correct orientation on~$K$.
\end{lem}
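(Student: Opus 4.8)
The plan is to reduce to a single standard model by exploiting the surjectivity of the mapping class group onto the symplectic group together with the transitivity of the latter on oriented symplectic planes.

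First I would set up a standard model. Fix the standard genus-$2$ surface $\Sigma_{\mathrm{std}}$ of Example~\ref{ex:stdsurface} (or any closed oriented genus-$2$ surface with the homology basis provided by Lemma~\ref{lem:SFforclosedsurfaces}), and let $K_0 \subset \Sigma_{\mathrm{std}}$ be the separating simple closed curve splitting $\Sigma_{\mathrm{std}}$ into its two one-handles. Each side is a once-punctured torus; taking $F_{K_0}$ to be the first of these, its inclusion induces $H_1(F_{K_0};\Z) = \langle [\alpha_1],[\beta_1]\rangle =: L_0$. Since the intersection form restricted to $L_0$ is unimodular with $\theta([\alpha_1],[\beta_1]) = 1$, the plane $L_0$ is symplectic, and one checks that the orientation of $H_1(F_{K_0};\Z)$ induced by the orientation of $F_{K_0}$ (as a subsurface of the oriented $\Sigma_{\mathrm{std}}$) is exactly the symplectic orientation recorded by $([\alpha_1],[\beta_1])$.

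Next I would record two classical facts. The action on homology gives a surjection $\operatorname{Mod}(\Sigma) \twoheadrightarrow \operatorname{Sp}(H_1(\Sigma;\Z),\theta) \cong \operatorname{Sp}_4(\Z)$ from the (orientation-preserving) mapping class group; and $\operatorname{Sp}_4(\Z)$ acts transitively on oriented symplectic planes. The latter follows from symplectic basis extension: any symplectic plane $L$ has a symplectic basis $(v_1,v_2)$ compatible with its orientation, and since $\theta$ is unimodular there is an orthogonal splitting $H_1(\Sigma;\Z) = L \oplus L^\pperp$ with $L^\pperp$ again symplectic; completing $(v_1,v_2)$ by a symplectic basis of $L^\pperp$ yields a full symplectic basis, and the unique linear map carrying the standard symplectic basis to this one lies in $\operatorname{Sp}_4(\Z)$ and sends $L_0$ to $L$ as oriented planes.

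Finally I would transport the model. Given an arbitrary symplectic plane $L$ of $H_1(\Sigma;\Z)$, choose $g \in \operatorname{Sp}_4(\Z)$ with $gL_0 = L$ as oriented planes, lift $g$ to an orientation-preserving homeomorphism $\phi$ of $\Sigma$, and set $K := \phi(K_0)$. Then $K$ is a non-contractible separating simple closed curve (homeomorphisms preserve all three properties, and $K_0$ bounds the genus-$1$, hence non-disk, surface $F_{K_0}$), $F_K := \phi(F_{K_0})$ is a genus-$1$ side of $K$, and $H_1(F_K;\Z) = \phi_* L_0 = gL_0 = L$. I expect the orientation bookkeeping to be the main obstacle: one must verify that ``$F_K$ is the side inducing the correct orientation on $K$'' is precisely the condition selecting $L$ rather than its complement $L^\pperp$, and that, because $\phi$ is orientation-preserving, the subsurface orientation of $F_K$ induces exactly the orientation of $L$ encoded by its symplectic basis. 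Keeping the conventions relating the orientation of $\Sigma$, the induced orientation of $K = \partial F_K$, and the symplectic orientation of $H_1(F_K;\Z)$ mutually consistent is the delicate point; everything else is transport of structure under a homeomorphism.
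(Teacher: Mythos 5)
Your argument is correct, but it takes a genuinely different route from the paper's. The paper argues locally: it picks a symplectic basis $(a,b)$ of $L$, realizes it by two simple closed curves $\alpha,\beta$ intersecting once transversely (citing \cite[Theorem~6.4]{FarbMargalit_12_APrimerOnMCG}), and takes $F_K$ to be a closed regular neighborhood of $\alpha\cup\beta$ with $K=\partial F_K$; the lemma then follows in three lines. You instead fix a standard model $(K_0,F_{K_0},L_0)$ and transport it, using transitivity of $\mathrm{Sp}_4(\Z)$ on oriented symplectic planes (via symplectic basis extension through the splitting $H_1(\Sigma;\Z)=L\oplus L^\pperp$) together with surjectivity of the mapping class group onto $\mathrm{Sp}_4(\Z)$. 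Both proofs ultimately rest on the same change-of-coordinates input from Farb--Margalit, but the paper's version needs only the realization of a single pair of classes by curves, while yours invokes the full symplectic surjectivity; in exchange, your version makes the reduction-to-a-model step explicit, which is the same device the paper uses later in the proof of the companion Lemma on $L^\pperp$. The orientation issue you flag at the end is not a genuine obstacle: since $K$ and its orientation are part of what is being constructed, you may simply declare $F_K:=\phi(F_{K_0})$ and orient $K$ as its boundary; and because $\phi$ is orientation-preserving it carries the positively intersecting pair $(\alpha_1,\beta_1)$ to a positively intersecting pair representing the oriented symplectic basis of $L$, so the induced orientation on $H_1(F_K;\Z)$ agrees with that of $L$. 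One small bookkeeping point: your $L_0$ lives in $H_1(\Sigma_{\mathrm{std}};\Z)$, so you should either first fix an orientation-preserving homeomorphism $\Sigma\cong\Sigma_{\mathrm{std}}$ or set up the model curve $K_0$ inside $\Sigma$ itself before comparing $L_0$ with $L$.
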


\begin{proof}
Let $(a,b)$ be a symplectic basis of $L$; that is, $a$ and $b$ are primitive and satisfy $\theta(a,b)=1$. There exist simple closed curves $\alpha$ and $\beta$ that intersect once transversely such that $a=[\alpha]$ and $b=[\beta]$ (e.g.~by \cite[Theorem~6.4]{FarbMargalit_12_APrimerOnMCG}). Consider a closed deformation retraction neighborhood $N$ of  $\alpha\cup\beta$ that is a smooth surface with boundary. $N$ is diffeomorphic to a genus 1 surface with one boundary component. Set $K\coloneqq\partial N$ and $F_K\coloneqq N$. By construction $([\alpha],[\beta])$ forms a basis of $H_1(F_K;\Z)$, hence $L=H_1(F_K;\Z)$.
\end{proof}

\begin{lem}\label{lem:Lpperp=H1(FKppepr)}
Let $K$ be a non-contractible separating simple closed curve in a closed oriented smooth genus 2 surface $\Sigma$ such that $F_K$ is the closure of the component of $\Sigma\setminus K$ that induces the correct orientation on $K$ and $F_K^\pperp$ is the closure of the other component of $\Sigma\setminus K$. Then $L=H_1(F_K;\Z)\subset H_1(\Sigma;\Z)$ is a symplectic plane and $L^\pperp=H_1(F_K^\pperp;\Z)\subset H_1(\Sigma;\Z)$.
\end{lem}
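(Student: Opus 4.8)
The plan is to reduce the statement to a single Mayer--Vietoris computation for the decomposition $\Sigma = F_K \cup_K F_K^\pperp$, supplemented by two elementary facts about the intersection form $\theta$ (which is the antisymmetrization of $U_\Sigma$, hence the standard symplectic form by Lemma~\ref{lem:SFforclosedsurfaces}): that curves lying in the same piece have the same intersection number computed in that piece or in $\Sigma$, and that curves in different pieces can be isotoped apart.

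First I would pin down the topology. Since $K$ is separating, cutting along it gives the two compact oriented subsurfaces $F_K$ and $F_K^\pperp$, each with the single boundary circle $K$; if $g_1,g_2$ are their genera, regluing forces $g_1+g_2=2$. As $K$ is non-contractible neither piece is a disk, so $g_1,g_2\geq 1$ and thus $g_1=g_2=1$. In particular $H_1(F_K;\Z)\cong H_1(F_K^\pperp;\Z)\cong\Z^2$. Next I would run Mayer--Vietoris on a standard open thickening ($U\simeq F_K$, $V\simeq F_K^\pperp$, $U\cap V\simeq K\simeq S^1$). The key point is that $[K]$ bounds in each piece (it is the boundary of that surface), so the map $H_1(K)\to H_1(F_K)\oplus H_1(F_K^\pperp)$ is zero; together with $H_2(F_K)=H_2(F_K^\pperp)=0$ and injectivity of the map on $H_0$, the sequence collapses to an isomorphism
\[ H_1(F_K;\Z)\oplus H_1(F_K^\pperp;\Z)\xrightarrow{\ \cong\ }H_1(\Sigma;\Z) \]
induced by the inclusions. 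Hence $L=H_1(F_K;\Z)$ and $H_1(F_K^\pperp;\Z)$ are complementary rank-two direct summands of $H_1(\Sigma;\Z)$; in particular $L$ is a plane.

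Then I would verify that $L$ is symplectic and identify $L^\pperp$. The genus-one surface $F_K$ carries two simple closed curves meeting transversely once, giving a basis $(v_1,v_2)$ of $H_1(F_K)$ with intersection number $1$; as intersection numbers of curves inside $F_K$ agree whether computed in $F_K$ or in $\Sigma$, we get $\theta(v_1,v_2)=1$, so $L$ is symplectic and $\theta|_L$ is nondegenerate. Curves in $F_K$ and curves in $F_K^\pperp$ can be isotoped off one another, so $\theta$ vanishes between $H_1(F_K)$ and $H_1(F_K^\pperp)$, giving $H_1(F_K^\pperp)\subseteq L^\pperp$. For the reverse inclusion I would take $x\in L^\pperp$ and use the Mayer--Vietoris splitting to write $x=\ell+v$ with $\ell\in L$ and $v\in H_1(F_K^\pperp)\subseteq L^\pperp$; then $\theta(\ell,y)=\theta(x,y)-\theta(v,y)=0$ for all $y\in L$, and nondegeneracy of $\theta|_L$ forces $\ell=0$, so $x\in H_1(F_K^\pperp)$. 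This yields $L^\pperp=H_1(F_K^\pperp)$ as subgroups, and as oriented planes they coincide because both carry the orientation singled out by a symplectic basis of the \emph{same} restricted form $\theta$.

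The main obstacle I anticipate is the orientation matching in the last step: confirming that the abstractly defined oriented plane $L^\pperp$ really agrees, \emph{with its orientation}, with $H_1(F_K^\pperp)$ oriented as a surface. The substantive content is that the intersection form of $\Sigma$ restricts on $H_1(F_K^\pperp)$ to the intersection form of $F_K^\pperp$, so that a surface-positive symplectic basis has $\theta=+1$ and hence is positive for the paper's convention as well; I expect this to require a careful but routine sign check. The secondary delicate point is the exactness bookkeeping in Mayer--Vietoris—precisely the vanishing of the class $[K]$ in each piece that makes the sequence split as an isomorphism—while the remaining steps are elementary.
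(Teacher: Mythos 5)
Your proof is correct, but it takes a genuinely different route from the paper's. The paper invokes the change-of-coordinates principle: since any non-contractible separating simple closed curve in a closed genus~$2$ surface is taken to any other by an orientation-preserving diffeomorphism, one may assume $\Sigma=\Sigma_{\rm std}$ and $F_K$ is a neighborhood of $\alpha_1\cup\beta_1$ from Example~\ref{ex:stdsurface}, whereupon both claims are read off from the basis $([\alpha_1],[\beta_1],[\alpha_2],[\beta_2])$. You instead work directly with the given curve: Mayer--Vietoris for $\Sigma=F_K\cup_K F_K^\pperp$ (with the key observation that $[K]=[\partial F_K]$ dies in each piece, so $H_1(F_K;\Z)\oplus H_1(F_K^\pperp;\Z)\to H_1(\Sigma;\Z)$ is an isomorphism), the locality of the intersection pairing to see $\theta(v_1,v_2)=1$ on a standard basis of the genus-one piece and $\theta\equiv 0$ between the two pieces, and nondegeneracy of $\theta|_L$ to upgrade $H_1(F_K^\pperp;\Z)\subseteq L^\pperp$ to an equality. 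What the paper's argument buys is brevity, at the cost of citing (implicitly) the classification of separating simple closed curves up to homeomorphism; what yours buys is self-containedness and an argument that transparently generalizes to higher genus (relevant to Problem~\ref{p:higherdim1}), at the cost of the exactness and orientation bookkeeping you flag at the end --- which, as you say, reduces to the standard fact that the intersection form of an oriented subsurface is the restriction of the ambient one, so a surface-positive basis of $H_1(F_K^\pperp;\Z)$ is $\theta$-symplectic and hence positively oriented for the paper's convention on $L^\pperp$.
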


\begin{proof}
This is a statement about the intersection form on $H_1(\Sigma;\Z)$. Up to applying an orientation preserving diffeomorphism, one may take $\Sigma$ to be one's favorite genus $2$ surface, say $\Sigma_\mathrm{std}$ from Example~\ref{ex:stdsurface}, and one may take $F_K$ to be one's favorite genus $1$ subsurface with one boundary component, say a closed small neighborhood of $\alpha_1\cup \beta_1$ from Example~\ref{ex:stdsurface}. We immediately have that $L$ is symplectic (e.g.~by considering the basis  $[\alpha_1]$ and $[\beta_1]$) and that
$H_1(F_K^\pperp;\Z)$ is its symplectic complement $L^\pperp$ (e.g.~by considering the basis $[\alpha_2]$ and $[\beta_2]$).
\end{proof}

\begin{theorem}\label{thm:main3d}
Fix a closed oriented smooth genus $2$ surface $\Sigma\subseteq S^3$ and fix $D \equiv 1 \pmod{4}$. Let $s_1$ and $s_2$ be in $\mathcal{Q}^+_D$. The following are equivalent:
\begin{enumerate}[(a)]
    \item \label{item:KinSigma}
There exists a non-contractible separating oriented simple closed curve $K\subseteq \Sigma$ such that the Seifert surfaces $F_1\coloneqq F_K$ and $F_2\coloneqq \overline{\Sigma\setminus (F_K\setminus K)}$ satisfy $s_1=s_{F_1}$ and $s_2=s_{F_2}$.
\item \label{item:t^2s1=s2}
There exist $a,c\in \Z$ with $1-4ac=D$ such that $[ax^2+xy+cy^2]^2*s_1=s_2$.
\end{enumerate}
\end{theorem}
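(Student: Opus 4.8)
The plan is to translate condition (a) into the linear algebra of symplectic planes developed in \S\ref{sec:2in4Matrices}--\S\ref{sec:Bhargava}, and then to extract condition (b) from the composition identity for symplectic complements. First I would use Lemma~\ref{lem:SFforclosedsurfaces} to fix a basis of $H_1(\Sigma;\Z)$ identifying $(H_1(\Sigma;\Z),U_\Sigma)$ with $(\quat(\Z),U)$; under this identification the intersection form becomes $\theta$ and the symmetrized Seifert form becomes $Q$. The dictionary at the heart of the proof is that, for a genus one subsurface $F\subseteq\Sigma$ with $H_1(F;\Z)=L$ a symplectic plane, the Seifert form class $s_F$ agrees with $[q_L]$: expanding $q_L$ on a symplectic basis $(v_1,v_2)$ of $L$ gives the form $U(v_1,v_1)x^2+(U(v_1,v_2)+U(v_2,v_1))xy+U(v_2,v_2)y^2$, which is exactly the representative of $s_F$, since $U$ restricts to the Seifert form $U_F$ on $H_1(F;\Z)$.

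Combining Lemma~\ref{lem:realizationofL} and Lemma~\ref{lem:Lpperp=H1(FKppepr)}, the separating curves $K$ with $F_1=F_K$ and $F_2=F_K^\pperp$ correspond precisely to symplectic planes $L=H_1(F_1;\Z)$ together with their symplectic complements $L^\pperp=H_1(F_2;\Z)$. Through the dictionary this yields $s_1=s_{F_1}=[q_L]$ and $s_2=s_{F_2}=\overline{[q_{L^\pperp}]}$, where the orientation reversal on the second factor is forced because $F_2$ must be oriented so that $\partial F_2=K$ matches the orientation induced by $F_1$, while $L^\pperp$ carries the symplectic-complement orientation of \S\ref{sec:Kleinsymplectic} (recall $s_{\overline F}=\overline{s_F}$). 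The algebraic input is then the identity underlying Corollary~\ref{cor:symplecticplanes}, namely $\overline{[q_{L^\pperp}]}=[q_{a_2(L)}]^{-2}*[q_L]$, which follows from Corollary~\ref{cor:nonprimLperpLpperp} together with Proposition~\ref{prop:KleinForSymp}; since $L$ is symplectic, Lemma~\ref{lem:SympIf1} guarantees that $q_{a_2(L)}$ is a form $ax^2+xy+cy^2$ with $1-4ac=D$.

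For the implication (a)$\Rightarrow$(b) I would set $L=H_1(F_1;\Z)$ and read the identity as $s_2=[q_{a_2(L)}]^{-2}*s_1=[\overline{q_{a_2(L)}}]^2*s_1$; as $\overline{q_{a_2(L)}}=ax^2-xy+cy^2$ is $\SL_2(\Z)$-equivalent to $cx^2+xy+ay^2$, still of the required shape and of discriminant $D$, condition (b) holds. For the converse I would run the construction backwards: given $a,c$ with $1-4ac=D$ and $t=[ax^2+xy+cy^2]$ satisfying $t^2*s_1=s_2$, I set $a_2=A(cx^2+xy+ay^2)\in\GrossLat$ (of symplectic type by Lemma~\ref{lem:SympIf1}, so that $[q_{a_2}]^{-2}=t^2$) and choose $a_1\in\GrossLat$ with $[q_{a_1}]$ prescribed so as to force $[q_L]=\overline{[q_{a_1}]}*[q_{a_2}]=s_1$ via the class-group action of $\mathcal{G}^+_D$ on content-$m$ forms from Theorem~\ref{thm:nonprimthm}. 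Since $a_2$ is primitive, the pair $(a_1,a_2)$ is automatically pair-primitive and lies in $\Kcal_D$; hence $L=\Psi(a_1,a_2)$ is a symplectic plane with $[q_L]=s_1$ and $\overline{[q_{L^\pperp}]}=t^2*s_1=s_2$. Feeding $L$ into Lemma~\ref{lem:realizationofL} produces the desired curve $K$ with $F_1=F_K$, and Lemma~\ref{lem:Lpperp=H1(FKppepr)} identifies $F_2=F_K^\pperp$; the dictionary then gives $s_{F_1}=s_1$ and $s_{F_2}=s_2$.

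I expect the main obstacle to be the orientation bookkeeping rather than the algebra. Pinning down exactly where the reversal $\overline{\,\cdot\,}$ lands requires reconciling three separate conventions: the orientation of $F_2$ needed to make it a Seifert surface for $K$, the symplectic-complement orientation of $L^\pperp$ from \S\ref{sec:Kleinsymplectic}, and the orientation on $L$ implicit in $s_F$ through the choice of symplectic basis. A sign error here would swap $t^2$ for $t^{-2}$ in condition (b); this is ultimately harmless because the generating set \eqref{eq:generatingset} is invariant under $[q]\mapsto[\overline q]$, but the two implications must be set up with a single consistent convention for the argument to close.
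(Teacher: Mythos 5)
Your proposal is correct and follows essentially the same route as the paper's proof: identify $H_1(\Sigma;\Z)$ with $(\quat(\Z),U)$ via Lemma~\ref{lem:SFforclosedsurfaces}, translate between separating curves and symplectic planes via Lemmas~\ref{lem:realizationofL} and~\ref{lem:Lpperp=H1(FKppepr)}, and apply Theorem~\ref{thm:nonprimthm} with Corollary~\ref{cor:nonprimLperpLpperp} to get $\overline{[q_{L^\pperp}]}=[q_{a_2(L)}]^{-2}*[q_L]$, running the Klein-vector construction backwards for the converse. The orientation bookkeeping you flag is handled in the paper exactly as you anticipate, via $F_2=\overline{F_K^\pperp}$ and $s_{\overline F}=\overline{s_F}$, with the final relabeling $[ax^2+xy+cy^2]^{-1}=[cx^2+xy+ay^2]$ absorbing the sign.
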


\begin{proof}
In this proof we identify $H_1(\Sigma;\Z)$ with $\Z^4$ using the basis described in Lemma~\ref{lem:SFforclosedsurfaces}. In particular, via this identification $U_\Sigma$ is given by the matrix $U$ from~\eqref{eq:BilFormU}.

Assume that \ref{item:KinSigma} holds. Then $L=H_1(F_1;\Z)\subset H_1(\Sigma;\Z)=\Z^4$ is a symplectic plane by Lemma~\ref{lem:Lpperp=H1(FKppepr)}.
In particular, $s_{F_1}=[q_L]$. By Theorem~\ref{thm:nonprimthm},
we have \[s_1=s_{F_1}=[q_L]\overset{\text{Thm.~\ref{thm:nonprimthm}}}{=}\overline{[q_{a_1}]}*[q_{a_2}].\] Using Lemma~\ref{lem:Lpperp=H1(FKppepr)} and Corollary~\ref{cor:nonprimLperpLpperp}, we find
\[s_2=s_{F_2}=\overline{s_{\overline{F_2}}}=\overline{s_{F_K^\pperp}}\overset{\text{Lemma~\ref{lem:Lpperp=H1(FKppepr)}}}{=}\overline{[q_{L^\pperp}]}\overset{\text{Cor.~\ref{cor:nonprimLperpLpperp}}}{=}\overline{[q_{a_1}]}*\overline{[q_{a_2}]}.\]
Since $L$ is symplectic, $q_{a_2}$ is primitive by Lemma~\ref{lem:SympIf1} and $\overline{[q_{a_2}]}=[q_{a_2}]^{-1}\in \mathcal{G}^+_D$. 
In fact, since $L$ is symplectic, $q_{a_2}=ax^2+xy+cy^2$ for some $a,c\in \Z$ with $D=1-4ac$. Hence, by acting with $[q_{a_2}]^{-2}$ on $s_1$,
we find 
\[[ax^2+xy+cy^2]^{-2}*s_1=[q_{a_2}]^{-2}*\left(\overline{[q_{a_1}]}*[q_{a_2}]\right)=\overline{[q_{a_1}]}*\overline{[q_{a_2}]}=s_2,\]
so \ref{item:t^2s1=s2} follows from $[ax^2+xy+cy^2]^{-1} = [cx^2+xy+ay^2]$ and relabeling.

Assume now that \ref{item:t^2s1=s2} holds. 
Set $q_{a_2}=ax^2-xy+cy^2$ and choose $q_{a_1}$ to be an IBQF in the class $[q_{a_2}]^{-1}*\overline{s_2} = [q_{a_2}]*\overline{s_1} \in \mathcal{Q}^+_D$. The corresponding Klein vectors $a_1$ and $a_2$ define a symplectic plane $L$ of $\Z^4=H_1(\Sigma;\Z)$ as in Proposition~\ref{prop:KleinBasicProp}. Let $K\subset \Sigma$ be a non-contractible separating simple closed curve in $\Sigma$ such that $L=H_1(F_K;\Z)\subset H_1(\Sigma;\Z)$, where $F_K$ is the closure of the component of $\Sigma\setminus K$ that induces the correct orientation on $K$; cf.~by Lemma~\ref{lem:realizationofL}.
Since $[q_L]=s_{F_K}$ and, by Lemma~\ref{lem:Lpperp=H1(FKppepr)},
$s_{F_2}=s_{\overline{F_K^\pperp}}=\overline{[q_{L^\pperp}]}$, we have
\[s_1=\overline{[q_{a_1}]}*[q_{a_2}]\overset{\text{Thm~\ref{thm:nonprimthm}}}{=}[q_L]=s_{F_1}\]
and 
\[s_2=\overline{[q_{a_1}]}*\overline{[q_{a_2}]}\overset{\text{Cor~\ref{cor:nonprimLperpLpperp}}}{=}\overline{[q_{L^\pperp}]}=s_{F_2}\]
as desired.
\end{proof}
Theorem~\ref{thm:main3d} in particular establishes Theorem~\ref{thmintro:main3d} from the introduction.

\begin{proof}[Proof of Theorem~\ref{thmintro:main3d}]
Clearly, Theorem~\ref{thm:main3d} \ref{item:t^2s1=s2}$\Rightarrow$\ref{item:KinSigma} establishes the if-part of Theorem~\ref{thmintro:main3d}.
To see the only-if-part, recall that if $F_1$ and $F_2$ are genus 1 Seifert surfaces for the same knot $K$, then $D_K=-\det(Q_{F_1})=-\det(Q_{F_2})$, where $-\det(Q_{F_i})$ is the discriminant of $s_{F_i}$. Hence setting $D\coloneqq D_K$, the only-if-part is indeed given by Theorem~\ref{thm:main3d} \ref{item:KinSigma}$\Rightarrow$\ref{item:t^2s1=s2}. 
\end{proof}

\subsection{Obstruction to ambient isotopy in $\B^4$ and proof of Theorem~\ref{thmintro:main4d}}\label{ss:obstructionstoisotopyinB4}
Let $F$ be a Seifert surface.
We denote by $F^{\B^4}$ the neatly embedded compact surface in $\B^4$ obtained by pushing $F\setminus \partial F$ into the $4$-ball, i.e.~we take $F^{\B^4}$ to be the time $1$ flow of a smooth vector field on $\B^4$ that is $0$ on $\partial F$ and non-zero radially inward pointing on $S^3\setminus \partial F$.
The surface $F^{\B^4}$ is well-defined up to smooth ambient isotopy relative to boundary.

Surfaces in $B^4$ arising as the result of pushing in a Seifert surface are in particular smooth submanifolds; however, the obstruction for differentiating two neatly embedded surfaces that we describe in what follows, can be formulated in the topological category. The later is worthwhile since we hope, in the future, to not only produce obstructions, but potentially use present ideas to provide classification results of pushed in Seifert surfaces up to topological ambient isotopy. However, in the topological category at many steps one ought to be careful to check things (e.g.~well-definedness of double branched covers of ambiently isotopic surfaces), potentially invoking modern topological 4-manifold theory as e.g.~outlined in \cite{FriedlNagelOrsonPowell}. We avoid considering these topological details, as in this article we only aim to differentiate pushed in Seifert surfaces, and only need the smooth setup.

For a neatly embedded compact surface $F$ in $\B^4$, we denote by $\Sigma(F)$ the oriented smooth (topological) $4$-manifold resulting from taking the double branched cover of $\B^4$ with branching set $F$. If two neatly embedded compact surfaces $F_1$ and $F_2$ in $\B^4$ can be mapped onto each other by an orientation preserving diffeomorphism (homeomorphism) of $\B^4$, then $\Sigma(F_1)$ and $\Sigma(F_2)$ are orientation preservingly diffeomorphic (homeomorphic). As, if two neatly embedded surfaces $F_1$ and $F_2$ in $\B^4$ are (topologically) \emph{ambiently isotopic}, then, in particular, there exists an orientation preserving diffeomorphism (homeomorphism) $\phi\colon \B^4\to \B^4$ such that $\phi(F_1)=F_2$, we find that the orientation preserving diffeomorphism (homeomorphism) type of the oriented smooth (topological) 4-manifold $\Sigma(F^{B^4})$ is well-defined.

For an oriented connected $4$-manifold $M$, we refer to the intersection form $H_2(M;\Z)\times H_2(M;\Z)\to \Z$ on the second integer homology group as \emph{the intersection form of $M$}. (For completeness, recall that the intersection form is given by $H_2(M;\Z)\times H_2(M;\Z)\to \Z; (A,B)\mapsto \left(\mathrm{PD}^{-1}(A)\cup \mathrm{PD}^{-1}(B)\right)([M])$, where $[M]\in H_4(M;\Z)$ denotes the fundamental class and $\mathrm{PD}\colon H^2(M;\Z)\to H_2(M;\Z)$ denotes Poincar\'e duality.)  
The intersection form of $\Sigma(F^{\B^4})$ is isomorphic to $Q_F$, meaning there exists an isometry (i.e.~a group homomorphism that preserves the bilinear forms) between $H_2(\Sigma(F^{\B^4});\Z)$ endowed with the intersection form and $H_1(F;\Z)$ endowed with $Q_F$~\cite[Theorem~3]{GordonLitherland_78_OnTheSigOfALink}.

\begin{lem}\label{lem:intersectionformofdoublebranchedcover}
Let $F_1$ and $F_2$ be genus 1 Seifert surfaces for the same knot $K$. Then the following are equivalent:
\begin{enumerate}[(a)]
    \item \label{item:q_Fneqq_F'}
$s_{F_1}$ is different from $s_{F_2}$ and $\overline{s_{F_2}}$, and
\item\label{item:cohomologyringdifferent}
$\Sigma(F_1^{\B^4})$ and $\Sigma(F_2^{\B^4})$ have non-isometric intersection forms % on the second integer homology group
(and, in particular, are not orientation preservingly homeomorphic).
\end{enumerate}

\end{lem}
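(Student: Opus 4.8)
The plan is to use the cited identification of the intersection form of $\Sigma(F_i^{\B^4})$ with the symmetrized Seifert form $Q_{F_i}$ in order to reduce the statement to a purely algebraic comparison of two integral symmetric bilinear forms on $\Z^2$. By \cite[Theorem~3]{GordonLitherland_78_OnTheSigOfALink} the intersection form of $\Sigma(F_i^{\B^4})$ is isometric to $Q_{F_i}$, so \ref{item:cohomologyringdifferent} holds precisely when $Q_{F_1}$ and $Q_{F_2}$ are non-isometric as integral symmetric bilinear forms. The parenthetical assertion is then immediate, as an orientation-preserving homeomorphism would induce an isometry of intersection forms. It therefore suffices to prove that $Q_{F_1}$ and $Q_{F_2}$ are isometric if and only if $s_{F_1}\in\{s_{F_2},\overline{s_{F_2}}\}$; negating this equivalence relates \ref{item:q_Fneqq_F'} and \ref{item:cohomologyringdifferent}.

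First I would set up the dictionary between $Q_F$ and $s_F$. Choosing a symplectic basis $(a,b)$ for $H_1(F;\Z)$, the form $Q_F$ is represented by the Gram matrix $\SmMat{2U_F(a,a)}{U_F(a,b)+U_F(b,a)}{U_F(a,b)+U_F(b,a)}{2U_F(b,b)}$, which is exactly twice the symmetric matrix representing the IBQF defining $s_F$; equivalently $Q_F(v,v)=2\,s_F(v)$ as quadratic forms on $\Z^2$. An isometry of $Q_{F_1}$ and $Q_{F_2}$ is by definition an element $g\in\GL_2(\Z)$ with $g^tQ_{F_1}g=Q_{F_2}$, and after dividing by $2$ this occurs exactly when representative IBQFs of $s_{F_1}$ and $s_{F_2}$ are $\GL_2(\Z)$-equivalent.

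It then remains to translate $\GL_2(\Z)$-equivalence into oriented classes. Writing $\GL_2(\Z)=\SL_2(\Z)\sqcup\SmMat{-1}{0}{0}{1}\SL_2(\Z)$ and recalling that $\SmMat{-1}{0}{0}{1}$ sends an IBQF $q$ to $\overline{q}$, two IBQFs are $\GL_2(\Z)$-equivalent if and only if their $\SL_2(\Z)$-classes coincide up to orientation reversal, that is $s_{F_1}\in\{s_{F_2},\overline{s_{F_2}}\}$. Combining the three steps yields the asserted equivalence. The only point requiring care---and the closest thing to an obstacle in an otherwise formal argument---is the bookkeeping between the $\SL_2(\Z)$-class that defines $s_F$ (using symplectic bases, where $\SL_2(\Z)=\operatorname{Sp}_2(\Z)$ in rank two) and the full $\GL_2(\Z)$-equivalence governing isometries of $Q_F$; the orientation-reversal term $\overline{s_{F_2}}$ appearing in \ref{item:q_Fneqq_F'} is precisely the residue of this discrepancy.
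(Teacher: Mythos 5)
Your proposal is correct and follows essentially the same route as the paper: both apply the Gordon--Litherland identification of the intersection form of $\Sigma(F_i^{\B^4})$ with $Q_{F_i}$, reduce to $\GL_2(\Z)$-equivalence of the associated IBQFs, and then observe that the $\GL_2(\Z)$-orbit is the union of the $\SL_2(\Z)$-orbits of $s_{F_2}$ and $\overline{s_{F_2}}$. Your extra remarks on the factor of $2$ and on symplectic versus arbitrary bases are correct bookkeeping that the paper leaves implicit.
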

    The key to Lemma~\ref{lem:intersectionformofdoublebranchedcover} is the observation that,
when $F$ is a genus $1$ Seifert surface of a knot $K$, the isomorphism type of $(H_1(F;\Z), Q_F)$ is determined by the $\mathrm{GL}_2(\Z)$-equivalence class of the IBQF $\frac{Q_F(a,a)}{2}x^2+Q_F(a,b)xy+\frac{Q_F(b,b)}{2}y^2$,
 where $(a,b)$ is any basis for $H_1(F;\Z)$.
\begin{proof}[Proof of Lemma~\ref{lem:intersectionformofdoublebranchedcover}]
%For a genus one Seifert surface $F_i$ of a knot the $i$-integer (co-)homology groups of $\Sigma(F_i^{B^4})$ are all free of ranks $1$, $0$, $2$, $0$, and $1$ respectively for $i=0,\ldots,4$. 
%Hence, we have that the cohomology rings are isomorphic if and only if the intersections forms on second (co-)homology are isometric. 
Using that the intersection form on $H_2(\Sigma(F_i^{B^4});\Z)$ is isomorphic to $Q_{F_i}$ on $H_1(F_i;\Z)$, we have that \ref{item:cohomologyringdifferent} holds if and only if there does not exist $M\in \mathrm{GL}_2(\Z)$ such that  $q_{F_1}((x,y)M)=q_{F_2}$.
Here, we have
\begin{align*}
    q_{F_i}&\coloneqq U_{F_i}(a_i,a_i)x^2+(U_{F_i}(a_i,b_i)+U_{F_i}(b_i,a_i))xy +U_{F_i}(b_i,b_i)y^2
    \\&=\frac{1}{2}\left(Q_F(a_i,a_i)x^2+2Q_{F_i}(a_i,b_i)xy +Q_F(b_i,b_i)y^2\right)\end{align*}
    for $(a_i,b_i)$ some symplectic basis of $H_1(F_i;\Z)$. 
    The above is equivalent to $s_{F_1}\notin\{s_{F_2},\overline{s_{F_2}}\}$, that is to~\ref{item:q_Fneqq_F'}, as desired.  
\end{proof}
Note that in the following theorem, which is the more precise version of Theorem~\ref{thmintro:main4d}, the case of primitive $s$ corresponds to $s$ with content $1$.
\begin{theorem}\label{thm:main4d} Fix an integer $D \equiv 1 \pmod{4}$. The following are equivalent.
\begin{enumerate}[(a)]
    \item\label{item:nontrivialS+} The subgroup $\mathcal{S}^+_D$ of $\mathcal{G}^+_D$ is non-trivial, i.e.~there exist $a,c\in\Z$ with $1-4ac=D$ such that
    $[ax^2+xy+cy^2]^2\neq 1$.
    \item\label{item:nonequivalentSeifertSurfacesexist} There exist a knot with determinant $D$ and Seifert surfaces $F_1$ and $F_2$ such that $\Sigma(F_1^{\B^4})$ and $\Sigma(F_2^{\B^4})$ have non-isometric intersection forms (and, in particular, are not orientation preservingly homeomorphic).
\end{enumerate}
Furthermore, for every $m \geq 1$ with $m^2 \mid D$, the following %more subtle statements
are equivalent.
\begin{enumerate}
\item\label{item:1} The image of the subgroup $\mathcal{S}_D^+$ under the map $\mathcal{G}_D^+ \to \mathcal{G}_{D/m^2}^+$ (cf.~Corollary~\ref{cor:modconductor}) has order at least $3$.
\item\label{item:2} For all $s\in \mathcal{Q}_D^+$ with content $m$, there exist Seifert surfaces $F_1$ and $F_2$ such that
$s_{F_1}=s$ and $\Sigma(F_1^{\B^4})$ and $\Sigma(F_2^{\B^4})$ have non-isometric intersection forms.
\end{enumerate}
% suppose that for some $m \geq 1$ with $m^2 \mid D$ there exist $a,c\in\Z$ with $1-4ac=D$ such that the image $[ax^2+xy+cy^2]^4$ under $\mathcal{G}_{D}^+ \to \mathcal{G}_{D/m^2}^+$  is non-trivial.
% Then for every $s\in \mathcal{Q}^+_D$ of content $m$,~\ref{item:nonequivalentSeifertSurfacesexist} holds and additionally $F_1$ can be chosen such that $s_{F_1}=s$.
% Furthermore, if there exist $a,c\in\Z$ with $1-4ac=D$ such that
%     $[ax^2+xy+cy^2]^4\neq 1$ (a stronger version of~\ref{item:nontrivialS+}), then, for every $s\in \mathcal{G}^+_D$,~\ref{item:nonequivalentSeifertSurfacesexist} holds and additionally $F_1$ can be chosen such that $s_{F_1}=s$.  
\end{theorem}

In fact, as will be clear from the proof, the Seifert surfaces in~\ref{item:nonequivalentSeifertSurfacesexist} can be chosen to be disjoint.

\begin{proof}
To see~\ref{item:nontrivialS+} implies~\ref{item:nonequivalentSeifertSurfacesexist}, assume~\ref{item:nontrivialS+} holds and let $t=[ax^2+xy+cy^2]\in\mathcal{G}^+_D$ with $1-4ac=D$ such that $t^2\neq 1$. We set $s_1=t^2$ and $s_2=1$. By Theorem~\ref{thm:main3d}, there exists a knot $K$ with disjoint Seifert surfaces $F_1$ and $F_2$ such that $s_{F_1}=s_1$ and $s_{F_2}=s_2$; in particular, $K$ has determinant $D$. By Lemma~\ref{lem:intersectionformofdoublebranchedcover}, $\Sigma(F_1^{\B^4})$ and $\Sigma(F_2^{\B^4})$ have non-isometric intersection forms. % on the second integer homology group.

We give two proofs to see~\ref{item:nonequivalentSeifertSurfacesexist} implies~\ref{item:nontrivialS+}. Let $F_1$ and $F_2$ be genus 1 Seifert surfaces for the same knot $K$. By~\cite{ScharlemannThompson_88}, there exists a sequence of genus 1 Seifert surfaces $G_1, G_2,\cdots, G_n$ such that $G_i\cap G_{i+1}=K$ for all $1\leq i<n$ and $G_1=F_1$ and $G_n=F_2$. Hence, by applying Theorem~\ref{thm:main3d} to $\Sigma\coloneqq G_i\cup\overline{G_{i+1}}$, we find $t_i=[a_ix^2+xy+c_iy^2]$ such that $t_i^2*s_{G_i}=s_{G_{i+1}}$. Therefore, $(\prod_{i=1}^{n-1} t_i^2)*s_{F_1}=s_{F_2}$. Now assume $F_1$ and $F_2$ are as described in~\ref{item:nonequivalentSeifertSurfacesexist}. In particular, $s_{F_1}\neq s_{F_2}$, hence $t_i^2\neq 1$ for at least one $i$, and by setting $a=a_i$ and $c=c_i$ for this $i$, we have proven~\ref{item:nontrivialS+}.  
Alternatively, this can be proved via results of the third author from \cite[\S2]{Miller_22} on S-equivalence of Seifert pairings of genus 1 knots; see Remark~\ref{rem:altproof} in Section~\ref{ss:Sequiv}, where S-equivalence and the aforementioned results by the third author are discussed.

To see the last part of the theorem, we note that~\eqref{item:1} is equivalent to the following.
There exists $t_1,t_2\in \mathcal{G}_D^+$ of the form $t_i = [a_i x^2 +xy + c_iy^2]$ where $D = 1-4a_ic_i$ such that the images of $t_1^2$ and $t_2^2$ are distinct and non-trivial under the map $\mathcal{G}_D^+ \to \mathcal{G}_{D/m^2}^+$ (cf.~Corollary~\ref{cor:modconductor}).

Assume first $t_1,t_2 \in \mathcal{G}_D^+$ are as in the equivalent formulation of~\eqref{item:1} above.
Set $s_1=s$ and $s_2 = t_1^2*s$ in a first attempt.
Then $s_1 \neq s_2$ (because $\Tilde{t}_1$ is not $2$-torsion) and $s_1 = \overline{s_2}$ if and only if $(\frac{1}{m}s)^2 = \Tilde{t}_1^{-2}$.
If the latter occurs, we replace $t_1$ with $t_2$ and note that $(\frac{1}{m}s)^2 = \Tilde{t}_1^2 \neq \Tilde{t}_2^2$ to ensure $s_1 \neq \overline{s_2}$.
By construction, $s_1\notin\{s_2,\overline{s_2}\}$ and so $\Sigma(F_1^{\B^4})$ and $\Sigma(F_2^{\B^4})$ have non-isometric intersection forms as desired.

Assume lastly that Item~\eqref{item:2} holds, and apply it for $s_{F_1}=m\cdot 1_{D/m^2}$. By Theorem \ref{thm:main3d} we find $t_1$ with $t_1^2\in \mathcal{S}_D^+$ such that $$t_1^2*(m\cdot 1_{D/m^2})=m(\Tilde t_1^2*1_{D/m^2})=s_{F_2}\neq s_{F_1}= m\cdot 1_{D/m^2},$$
where we used \eqref{eq:reductionComp} in the last equality.
It follows that $\Tilde t_1^2\neq 1_{D/m^2}$.
We now apply Item~\eqref{item:2} in the same way, but with $s_1:=s_{F_1}=m\Tilde t_1$, which has content $m$. 
By Theorem \ref{thm:main3d} we find $t_2$ with $t_2^2 \in \mathcal{S}_D^+$ such that 
$$
s_2:=s_{F_2}=t_2^2*s_1=t_2^2*(m\Tilde t_1)=m(\Tilde t_2^2*\Tilde t_1)
$$
and with $s_1\notin\set{s_2,\overline{s_2}}$. Equivalently,
$$
m\Tilde t_1\neq m(\Tilde t_2^2*\Tilde t_1), \quad m\Tilde t_1\neq 
\overline{s_2}=m(\Tilde t_2^{-2}*\Tilde t_1^{-1}).
$$
This gives $t_1,t_2\in S_D^+$ with $\Tilde{t_2}^2\neq 1_{D/m^2}$ and with $\Tilde{t_1}^2\neq \Tilde{t_2}^{-2}$.  Together with $\Tilde{t_1}^2\neq 1_{D/m^2}$ that was established above, Item~\eqref{item:1} follows. 
% To see the last part of the theorem, let $t=[ax^2+xy+cy^2]\in\mathcal{G}^+_D$ with $1-4ac=D$ such that $t^4\neq 1$. For given $s\in \mathcal{G}^+_D$ we may assume $s^2 \neq t^{-2}$ since $t^4\neq 1$ (else, replace $t$ by $t^{-1}$).
% Set $s_1=s$ and $s_2 = t^2*s$.
% Theorem~\ref{thm:main3d} guarantees the existence of a knot $K$ with disjoint Seifert surfaces $F_1$ and $F_2$ such that $s_{F_1}=s_1$ and $s_{F_2}=s_2$. By construction, $s_1\notin\{s_2,\overline{s_2}\}$, $\Sigma(F_1^{\B^4})$ and $\Sigma(F_2^{\B^4})$ have non-isometric intersection forms as desired. 
\end{proof}
This result in particular gives the less technical Theorem~\ref{thmintro:main4d} from the introduction, using that $s\in \mathcal{Q}^+_{D}$ is primitive exactly means that $s$ has content $1$.
%In fact, note that for the furthermore, if $s$ primitive (i.e.~content equals 1) having a $t$ with $t^4$ non-trivial, we can choose $t_1=t^2$ and $t_2=t^{-2}$. 
\begin{proof}[Proof of Theorem~\ref{thmintro:main4d}]
We note that by Theorem~\ref{thm:main4d}, there exists a knot with determinant $D$ and disjoint Seifert surfaces $F_1$ and $F_2$ such that $\Sigma(F_1^{\B^4})$ and $\Sigma(F_2^{\B^4})$ are not orientation preservingly homeomorphic. %In particular, $\Sigma(F_1^{\B^4})$ and $\Sigma(F_2^{\B^4})$ are not orientation preservingly homeomorphic so
Hence, there cannot exist an orientation preserving self-homeomorphism $\phi\colon B^4\to B^4$ with $\phi(F_1)=F_2$; i.e.~$F_1^{\B^4}$ and $F_2^{\B^4}$ are not ambiently isotopic.

For the second part of the theorem, assume there exists $t=[ax^2+xy+cy^2]\in\mathcal{G}^+_{D}$
such that $t^4\neq 1$, then in particular $t^2$ is an element of order at least 3 and the statement follows from the furthermore of Theorem~\ref{thm:main4d}:
we find a knot with surfaces $F_1$ and $F_2$ such that $s_{F_1}$ is any fixed $s\in \mathcal{G}^+_{D}$ (i.e.~$s\in \mathcal{Q}^+_{D}$ with content $1$).
    \end{proof}

\subsection{S-equivalence}\label{ss:Sequiv}

We comment further on the condition in Theorem~\ref{thmintro:main3d} respectively Theorem~\ref{thm:main3d}, which states that IBQFs $s_1$ and $s_2$ can be realized as the Seifert forms of a disjoint genus $1$ pair if and only if there exists a class $t=[ax^2+xy+cy^2]$ such that $t^2*s_1=s_2$.

% \begin{remark}
% After discovering Theorem~\ref{thmintro:main3d} through an algebraic calculation, the authors realize that the
% necessary and sufficient condition fits with a prior result by the third author. For fixed $D\equiv 1 \pmod{4}$, denote by $\mathcal{S}^+_D$ the subgroup of $\mathcal{G}^+_D$ generated by elements of the form $[ax^2+xy+cy^2]^2$. The third author established that two $\SL_2(\Z)$-equivalence classes $s_1$ and $s_2$ of IBQFs of discriminant $D$ are S-equivalent if and only if there exists an element $s\in \mathcal{S}^+_D$ such that $s*s_1=s_2$~\cite[Prop ?]{Miller_22}. Abstaining from defining S-equivalence, we note that if $s_1$ and $s_2$ arise from genus $1$ Seifert surfaces, they are in particular S-equivalent. Hence, it follows that if $s_1$ and $s_2$ are realized by a disjoint genus $1$ pair, then $s*s_1=s_2$ for some $s\in \mathcal{S}^+_D$. Theorem~\ref{thmintro:main3d} asserts that moreover $s$ has to lie in the generator set $\{t^2\mid t=[ax^2+xy+cy^2\}$ (the only if statement) and that in fact this is sufficient for the existence of a genus $1$ pair (if statement). We discuss this further in Section~\ref{ss:Sequiv}.
% \end{remark}

After discovering Theorem~\ref{thmintro:main3d} through an algebraic calculation, the authors realize that the necessary and sufficient condition fits with the following prior result by the third author. 
For a fixed discriminant $D\equiv 1 \pmod{4}$, recall that we denote by $\mathcal{S}^+_D$ the subgroup of $\mathcal{G}^+_D$ generated by elements of the form $[ax^2+xy+cy^2]^2$. The third author has described and investigated the notion of S-equivalence for Seifert forms of Seifert surfaces of genus 1 for knots using this subgroup~\cite[\S2]{Miller_22}.  In doing so, she recovered some results that were found much earlier by Dean Bandes in his  Ph.D. thesis \cite{bandes-thesis}.
In the notation of the present paper, the relevant result is the following.
\begin{theorem}[{\cite[Thm.~14.1]{bandes-thesis}, \cite[Prop.~2.9]{Miller_22}}]\label{thm:AlisonIMRN}
    Let $s_1$ and $s_2$ be in $\mathcal{Q}^+_D$. Then $s_1$ and $s_2$ are S-equivalent if and only if there exists an element $s\in\mathcal{S}^+_D$ such that $s*s_1=s_2$.
In other words, $s_1$ and $s_2$ are S-equivalent if and only if there exists a finite number of $t_1,\dots,t_n$ of the form $t_i=[a_ix^2+xy+c_iy^2]$ such that $(\prod_{i=1}^{n}t_i^2)*s_1=s_2$. 
\end{theorem}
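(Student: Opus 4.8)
The plan is to prove both implications by translating S-equivalence into geometry and then invoking the correspondence of Theorem~\ref{thm:main3d}. Recall that, in the framework of the third author in \cite{Miller_22}, two classes $s_1,s_2 \in \mathcal{Q}^+_D$ are S-equivalent precisely when they occur as the classes $s_{F_1},s_{F_2}$ of genus $1$ Seifert surfaces $F_1,F_2$ of one and the same knot. With this dictionary in hand, the statement becomes a comparison between the relation ``realized on a common knot'' on genus $1$ Seifert forms and the orbit relation of the subgroup $\mathcal{S}^+_D$ acting on $\mathcal{Q}^+_D$, and the whole proof reduces to running Theorem~\ref{thm:main3d} in both directions along a chain of surfaces.

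For the implication that $s * s_1 = s_2$ with $s \in \mathcal{S}^+_D$ forces S-equivalence, I would first reduce to a single generator: since $\mathcal{S}^+_D$ is generated by the classes $[ax^2+xy+cy^2]^2$ and S-equivalence is transitive, it suffices to show that $s_1$ and $t^2 * s_1$ are S-equivalent for each $t = [ax^2+xy+cy^2]$ with $1-4ac = D$. This is exactly the if-direction of Theorem~\ref{thm:main3d}: fixing a genus $2$ surface $\Sigma \subset S^3$, one produces a separating curve $K$ whose two sides $F_1,F_2$ are disjoint genus $1$ Seifert surfaces for the common knot $K$ with $s_{F_1}=s_1$ and $s_{F_2}=t^2 * s_1$. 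Being realized on a common knot, $s_1$ and $t^2 * s_1$ are S-equivalent, and telescoping over the generators of $s$ finishes this direction.

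For the converse I would feed the geometry of two surfaces on a common knot into \cite{ScharlemannThompson_88}: if $s_1,s_2$ are S-equivalent, choose genus $1$ Seifert surfaces $F_1,F_2$ of a knot $K$ realizing them and connect them by a finite sequence $G_1 = F_1,\dots,G_n = F_2$ of genus $1$ Seifert surfaces for $K$ with $G_i \cap G_{i+1} = K$. Each consecutive pair is a disjoint genus $1$ pair, so Theorem~\ref{thm:main3d} supplies $t_i = [a_i x^2 + xy + c_i y^2]$ with $t_i^2 * s_{G_i} = s_{G_{i+1}}$; composing gives $(\prod_i t_i^2) * s_1 = s_2$ with $\prod_i t_i^2 \in \mathcal{S}^+_D$. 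This is precisely the argument already carried out in the proof of Theorem~\ref{thm:main4d}.

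The main obstacle is the bookkeeping at the interface between the algebraic and geometric descriptions rather than any single hard step. One must verify that the relation ``realizable on a common knot by genus $1$ surfaces'' genuinely coincides with the algebraic S-equivalence of genus $1$ Seifert matrices, so that one may pass freely between the two, and one must track orientation reversals carefully, since $s_{F_2}$ is computed from the complementary side as $\overline{[q_{L^\pperp}]}$ rather than $[q_{L^\pperp}]$. A fully self-contained alternative would avoid the surfaces entirely and instead check by a direct matrix computation that a single elementary enlargement-and-reduction move on a genus $1$ Seifert matrix multiplies the associated class in $\mathcal{Q}^+_D$ by some $[ax^2+xy+cy^2]^2$; this is the computational heart of \cite{bandes-thesis} and \cite{Miller_22}, and reproving it is where the real work lies.
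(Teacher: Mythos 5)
Your proof of the easy implication (that $s*s_1=s_2$ with $s\in\mathcal{S}^+_D$ forces S-equivalence) is correct and is essentially the argument the paper gives for Corollary~\ref{cor:halfofAlisonIMRN}: reduce to a single generator $t^2$, realize $s_1$ and $t^2*s_1$ as Seifert forms of a disjoint genus $1$ pair via Theorem~\ref{thm:main3d}, apply Lemma~\ref{lem:sameboundary=>S-equiv}, and telescope over the generators.

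The converse direction, however, has a genuine gap. Your opening ``dictionary'' --- that $s_1,s_2\in\mathcal{Q}^+_D$ are S-equivalent precisely when they occur as $s_{F_1},s_{F_2}$ for genus $1$ Seifert surfaces $F_1,F_2$ of one and the same knot --- is not a known fact. Only one direction of it is known: surfaces with a common boundary knot have S-equivalent Seifert forms (Lemma~\ref{lem:sameboundary=>S-equiv}). The reverse direction, that any two S-equivalent classes in $\mathcal{Q}^+_D$ are realized by genus $1$ Seifert surfaces of a single knot, is exactly the conjecture stated in \S\ref{ss:Sequiv} and Problem~\ref{p:S-equi<->realization}; the paper explicitly says it does not know whether S-equivalence is sufficient for realization on a common knot. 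Your converse argument begins by ``choosing genus $1$ Seifert surfaces $F_1,F_2$ of a knot $K$ realizing them,'' which presupposes this open statement, so the otherwise correct chain through \cite{ScharlemannThompson_88} and Theorem~\ref{thm:main3d} never gets off the ground. (The paper's own use of that chain, in the proof of Theorem~\ref{thm:main4d}, starts from two surfaces of a common knot \emph{given by hypothesis}, not from an abstract S-equivalence.) Note also that the paper does not prove Theorem~\ref{thm:AlisonIMRN} at all: it is quoted from \cite{bandes-thesis} and \cite{Miller_22}, where it is established by the purely algebraic route you mention only in passing at the end --- a direct matrix computation showing that an elementary enlargement-and-reduction of a genus $1$ Seifert matrix changes the associated class by composition with some $[ax^2+xy+cy^2]^{2}$. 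That computation is the actual content of the hard direction, and your proposal defers it rather than supplying it.
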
 
Theorem~\ref{thm:AlisonIMRN} is a translation of \cite[Prop.~2.9]{Miller_22} into the language of binary quadratic forms via the correspondence described in \cite[\S2]{Miller_22}.  Alternatively, it is a translation of \cite[Thm.~14.1]{bandes-thesis} into the language of binary quadratic forms via the correspondence described in \cite[Thm.~9.1]{bandes-thesis}.  
(The strict similarity classes of modules defined in \cite{bandes-thesis} are essentially narrow ideal classes.)

While we abstain from defining the notion of S-equivalence beyond giving the above equivalent formulation, we note that the notion is of interest and arises naturally in knot theory due to the following.

\begin{lem}[{\cite[Theorem~8.4]{Lickorish_97}}]\label{lem:sameboundary=>S-equiv}
    All Seifert forms that arise from Seifert surfaces with boundary a fixed knot are S-equivalent.
\end{lem}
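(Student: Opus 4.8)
The plan is to reduce the statement to a geometric normal-form result together with a short linking-number computation. Recall that S-equivalence of Seifert matrices is generated by integral congruence $V \mapsto P^{\mathsf T} V P$ with $P \in \GL_n(\Z)$, together with the \emph{enlargement} move, which replaces $V$ by a matrix of the block shape
\[
\begin{pmatrix} V & \xi & 0 \\ 0 & 0 & 1 \\ 0 & 0 & 0 \end{pmatrix}
\]
(and its transpose-analogue), where $\xi$ is a column vector, as well as the inverse \emph{reduction} move. The claim then follows once we establish: (i) an ambient isotopy of a Seifert surface fixing $K$ alters $U_F$ only by congruence; (ii) adding a hollow $1$-handle (a ``tube'') to a Seifert surface alters $U_F$ by an enlargement, and removing one alters it by a reduction; and (iii) any two Seifert surfaces $F$ and $F'$ for $K$ are connected by a finite sequence of ambient isotopies and tube additions/removals.

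Point (i) is immediate: an isotopy induces an isomorphism $H_1(F;\Z) \xrightarrow{\sim} H_1(F';\Z)$ carrying $U_F$ to $U_{F'}$, so the matrices are congruent over $\Z$. For point (ii) I would argue directly from the definition $U_F([\alpha],[\beta]) = \mathrm{lk}(\alpha,\beta^+)$. Attaching a tube introduces two new generators of $H_1$, the core $a$ running through the handle and the belt $b$ encircling it; a neighbourhood computation shows that $b$ has trivial self-linking and links the old classes trivially, while $\mathrm{lk}(a,b^+) = 1$ up to sign. Recording these values yields precisely the block form of an enlargement, and the inverse operation yields a reduction.

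Point (iii) is the geometric heart of the argument and the step I expect to be the main obstacle. To establish it I would place $F$ and $F'$ in general position relative to their common boundary $K$, so that $\mathrm{int}(F)\cap\mathrm{int}(F')$ is a disjoint union of simple closed curves. Choosing an innermost such curve $c$, bounding a disk $D$ on one surface with $\mathrm{int}(D)$ disjoint from the other, I would surger the second surface along $D$; up to isotopy this surgery amounts to adding or removing a tube, and it strictly decreases the number of interior intersection curves. Iterating removes all interior intersections, after which the two modified surfaces cobound a region of $S^3$ and can be isotoped onto one another. The delicate points are the innermost-disk selection, tracking orientations so that each surgered surface remains a genuine oriented Seifert surface for $K$, handling separating curves so that no spurious closed components are created, and verifying that the terminal surfaces are actually ambiently isotopic rather than merely homologous.

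Combining the three points, the passage from $F$ to $F'$ is realized by finitely many tube additions, tube removals, and ambient isotopies; by (i) and (ii) each step changes the Seifert form by a generating S-equivalence move, and since S-equivalence is transitive, $U_F$ and $U_{F'}$ are S-equivalent, as claimed.
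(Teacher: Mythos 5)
The paper does not prove this lemma at all; it is quoted verbatim from Lickorish (Theorem~8.4 there), so the only fair comparison is with the standard argument in that reference. Your overall strategy --- (i) isotopy gives congruence, (ii) tube addition gives an enlargement, (iii) any two Seifert surfaces for $K$ are related by isotopies and tubings --- is exactly the route of the cited source, and steps (i) and (ii) are fine.

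There is, however, a genuine error at the end of your step (iii). After the innermost-disk surgeries have made the two surfaces disjoint in their interiors, you assert that they ``cobound a region of $S^3$ and can be isotoped onto one another.'' The second half of that sentence is false: two disjoint Seifert surfaces for the same knot need not be isotopic. This is not a peripheral subtlety --- it is the entire subject of the paper you are reading, whose main examples (e.g.\ the disjoint genus one pair for determinant $-23$ from~\cite{HMKPS22}, discussed in Example~\ref{ex:-23}) are disjoint Seifert surfaces for one knot that are non-isotopic even after being pushed into $B^4$, precisely because their Seifert forms are inequivalent. What is true, and what Lickorish's proof actually uses, is that two Seifert surfaces with disjoint interiors cobound a compact $3$-manifold $W\subset S^3$ (this already requires a homological argument in the knot complement), and that a handle decomposition of $W$ built on a collar of one boundary component exhibits \emph{both} surfaces as tube-equivalent to a common intermediate surface: the $1$-handles are tube additions to one surface, and the $2$-handles, read from the other end, are tube additions to the other. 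Without this Morse-theoretic step your chain of moves does not reach from $F$ to $F'$, so the S-equivalence is not established. (Your innermost-disk reduction also silently needs irreducibility of the knot complement to discard curves inessential on both surfaces, and a separate argument to dispose of closed components created by a separating compression, but you at least flag those as delicate; the isotopy claim for the disjoint endgame is the step that actually fails.)
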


To clarify the connection between our result and the earlier work of the third author, we describe how to use Theorem~\ref{thmintro:main3d} respectively Theorem~\ref{thm:main3d} to recover the following statement, which is the easier half of Theorem~\ref{thm:AlisonIMRN}.
\begin{cor}\label{cor:halfofAlisonIMRN}
Fix $D\equiv 1 \pmod{4}$. Let $s_1$ and $s_2$ be in $\mathcal{Q}^+_D$. If there exists an $s\in\mathcal{S}^+_D$ such that $s*s_1=s_2$, then $s_1$ and $s_2$ are S-equivalent.
\end{cor}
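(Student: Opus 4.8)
The plan is to reduce the statement to a single application of the realization result Theorem~\ref{thm:main3d} at each link of a chain, and then to combine Lemma~\ref{lem:sameboundary=>S-equiv} with the transitivity of S-equivalence. First I would unpack the hypothesis: since $s\in\mathcal{S}^+_D$ lies in the subgroup generated by the set $\{[ax^2+xy+cy^2]^2 : a,c\in\Z,\ 1-4ac=D\}$, and this set is invariant under $[q]\mapsto[\overline q]$ (hence closed under inversion in $\mathcal{G}^+_D$), I can write
\[
s=\prod_{i=1}^{n}t_i^2,\qquad t_i=[a_ix^2+xy+c_iy^2],\quad 1-4a_ic_i=D,
\]
as a genuine product of these generators, with no separately inverted factors needed.

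Next I would build the telescoping chain $r_0=s_1$ and $r_j=t_j^2*r_{j-1}$ for $1\le j\le n$, so that $r_n=s*s_1=s_2$. Each $r_j$ again lies in $\mathcal{Q}^+_D$: the classes $t_j^2$ are primitive, so composing with them preserves the content, and all the $r_j$ share the content of $s_1$ (which is precisely why $s_1$ and $s_2=s*s_1$ have equal content). For each consecutive pair, the identity $t_j^2*r_{j-1}=r_j$ is exactly condition~\ref{item:t^2s1=s2} of Theorem~\ref{thm:main3d}, applied with $r_{j-1}$ in place of $s_1$ and $r_j$ in place of $s_2$. The implication \ref{item:t^2s1=s2}$\Rightarrow$\ref{item:KinSigma} then produces a non-contractible separating curve $K_j\subset\Sigma$ and two genus $1$ Seifert surfaces $F_1^{(j)}=F_{K_j}$ and $F_2^{(j)}=\overline{\Sigma\setminus(F_{K_j}\setminus K_j)}$ with $s_{F_1^{(j)}}=r_{j-1}$ and $s_{F_2^{(j)}}=r_j$. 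Both surfaces have the same oriented boundary $K_j$, so Lemma~\ref{lem:sameboundary=>S-equiv} shows that $r_{j-1}$ and $r_j$ are S-equivalent.

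Finally, since S-equivalence is an equivalence relation, transitivity along the chain $s_1=r_0\sim r_1\sim\cdots\sim r_n=s_2$ yields that $s_1$ and $s_2$ are S-equivalent, as claimed. I do not expect a serious obstacle here, since the substance is carried entirely by the already-established Theorem~\ref{thm:main3d} and the classical Lemma~\ref{lem:sameboundary=>S-equiv}. The only points requiring care are bookkeeping ones: confirming that the generating set of $\mathcal{S}^+_D$ is closed under inversion, so that $s$ really decomposes as a product of squares $t_i^2$, and checking that each intermediate class $r_j$ remains in $\mathcal{Q}^+_D$ with the content shared by $s_1$, so that Theorem~\ref{thm:main3d} is applicable at every step of the chain.
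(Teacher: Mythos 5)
Your proposal is correct and follows essentially the same route as the paper's proof: decompose $s$ as a product of squares $t_i^2$ of classes $[a_ix^2+xy+c_iy^2]$, apply Theorem~\ref{thm:main3d} to each consecutive pair in the telescoping chain to realize them as Seifert forms of surfaces bounding the same knot, invoke Lemma~\ref{lem:sameboundary=>S-equiv}, and conclude by transitivity. Your explicit checks that the generating set of $\mathcal{S}^+_D$ is closed under inversion and that each intermediate class stays in $\mathcal{Q}^+_D$ are points the paper handles implicitly (the former in \S\ref{sec:notation}), so they are welcome but not a departure.
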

\begin{proof} By the definition of $\mathcal{S}^+_D$, there exist $t_1,\dots t_n$ of the form $t_i=[a_ix^2+xy+c_iy^2]$ such that $s=\prod_{i=1}^{n}t_i^2$. By Theorem~\ref{thm:main3d}, for all $1\leq k\leq n$, there exists a knot $K_k$ with a pair of genus $1$ Seifert surfaces $F_k$ and $F_k'$ (that are disjoint in their interior) such that $\prod_{i=1}^{k-1}t_i^2*s_1=s_{F_k}$ and $\prod_{i=1}^{k}t_i^2*s_1=s_{F_k'}$. In particular, for all $1\leq k\leq n-1$, $\prod_{i=1}^{k-1}t_i^2*s_1=s_{F_k}$ and $\prod_{i=1}^{k}t_i^2*s_1=s_{F_k'}$ are S-equivalent by Lemma~\ref{lem:sameboundary=>S-equiv}. Since S-equivalence is an equivalence relation and as such is transitive, we find $s_1$ and $s_2=\prod_{i=1}^{n}t_i^2*s_1$ are S-equivalent.
\end{proof}

Note that in the above proof, we did \emph{not} establish that $s_1$ and $s_2$ can be realized as the Seifert forms of Seifert surfaces for the \emph{same} knot.
However, given Theorem~\ref{thm:main3d}, which for the special case of $s*s_1=s_2$ with $s=[ax^2+xy+cy^2]^2$ not only establishes S-equivalence but in fact realization as Seifert forms for Seifert surfaces for the same knot, we conjecture the following.
For all $D\equiv 1 \pmod{4}$, if $s\in\mathcal{S}^+_D$ and $s_1,s_2\in\mathcal{Q}^+_D$ satisfy $s*s_1=s_2$, then there exist genus $1$ Seifert surfaces $F_1$ and $F_2$ with boundary the same knot such that $s_1=s_{F_1}$ and $s_2=s_{F_2}$; see Problem~\ref{p:S-equi<->realization}.

\begin{remark}\label{rem:altproof} We provide a proof of~\ref{item:nonequivalentSeifertSurfacesexist} implies~\ref{item:nontrivialS+} of Theorem~\ref{thm:main4d} using Theorem~\ref{thm:AlisonIMRN} in place of making use of~\cite{ScharlemannThompson_88}.
Assume $F_1$ and $F_2$ are Seifert surfaces as described in~\ref{item:nonequivalentSeifertSurfacesexist}. By Lemma~\ref{lem:sameboundary=>S-equiv}, $s_{F_1}$ and $s_{F_2}$ are S-equivalent; hence, by Theorem~\ref{thm:AlisonIMRN}, there exist
$t_1,\dots,t_n$ of the form $t_i=[a_ix^2+xy+c_iy^2]$ such that $(\prod_{i=1}^{n}t_i^2)*s_{F_1}=s_{F_2}$. Now, as in the proof of Theorem~\ref{thm:main4d}, since $s_{F_1}\neq s_{F_2}$, $t_i^2\neq 1$ for at least one $i$.  
\end{remark}

We end this paragraph with more philosophical comments. Firstly, in hindsight, in light of the third author's earlier results, the condition we found in Theorem~\ref{thmintro:main3d} respectively Theorem~\ref{thm:main3d} make sense, as they are a refinement of what needs to hold for S-equivalent $s_1$ and $s_2$. However, this is not how we discovered them. 
They arose by direct calculation using Theorem~\ref{thm:nonprimthm}, we only later noticed the connection. Secondly, the strength of Theorem~\ref{thm:main3d} (certainly with view of the application to pushed in Seifert surfaces) lies in the fact the IBQFs are realized by Seifert surfaces with the \emph{same} knot as its boundary. Instead, the third author's prior result is an algebraic statement and does not give realization for knots (however it does imply a realization result in the analogous high-dimensional setting of simple $(4a+1)$-knots, where two knots with S-equivalent Seifert pairings necessarily bound isotopic knots).
The natural analog of the third author's result (Theorem~\ref{thm:AlisonIMRN}) would be the positive resolution of the conjecture made in the above paragraph. As an immediate application, one could find many further examples of forms that are realized by Seifert surfaces with the same boundary and hence could use that to provide many more examples of Seifert surfaces that are non-isotopic even when pushed into the $4$-ball.

\subsection{Realization for fixed knots}\label{sec:realizationfixed}

The reader may notice that in our main 3-dimensional results (i.e.~Theorem~\ref{thmintro:main3d} and the more detailed Theorem~\ref{thm:main3d}), we make no claim about the knot that arises. A stronger claim would be the following. Given a genus 1 Seifert surface $F$ for a knot $K$ such that $s_F=s_1$ and $s_2$ with $[ax^2+xy+cy^2]^2*s_1=s_2$ for some $a,c\in\Z$, there exist Seifert surfaces $F_1$ and $F_2$ \emph{for the knot $K$} such that $s_1=s_{F_1}$ and $s_2=s_{F_2}$. 
Or even stronger, one might hope to find those $F_1$ and $F_2$ with the additional restriction that $F_1$ and $F_2$ are disjoint away from the boundary and that $F_1=F$.
In fact, if either of the above were true, in the proof of Corollary~\ref{cor:halfofAlisonIMRN}, we could choose all $K_i$ to be the same knot and would obtain a much stronger result, namely that, if $s*s_1=s_2$ for some $s\in\mathcal{S}^+_D$, then $s_1$ and $s_2$ are realized as the Seifert forms of genus 1 Seifert surfaces with boundary the same knot; compare with the conjecture in the above subsection and with Problem~\ref{p:S-equi<->realization}.

However, such stronger claims cannot hold in general. In fact, even for the case of $D=-23$, there is an example of a knot for which such a result cannot be obtained.

\begin{example}
The knot $K=9_5$ has exactly two non-isotopic Seifert surfaces $F$ and $F'$. These surfaces have associated Seifert forms $[3x^2+xy+2y^2]$ and $[3x^2-xy+2y^2]$ respectively. (In fact, $F$ and $F'$ become smoothly ambiently isotopic relative boundary when pushed into the $4$-ball.) We elaborate. To see two distinct surfaces, we take $F$ and $F'$ as the result obtained by plumbing together a $3$-times twisted unknotted band to a $2$-times twisted unknotted band as indicated in Figure~\ref{fig:9_5}.
\begin{figure}[ht]
  \centering
  \def\svgscale{0.8}
%% Creator: Inkscape 1.0.2-2 (e86c870879, 2021-01-15), www.inkscape.org
%% PDF/EPS/PS + LaTeX output extension by Johan Engelen, 2010
%% Accompanies image file '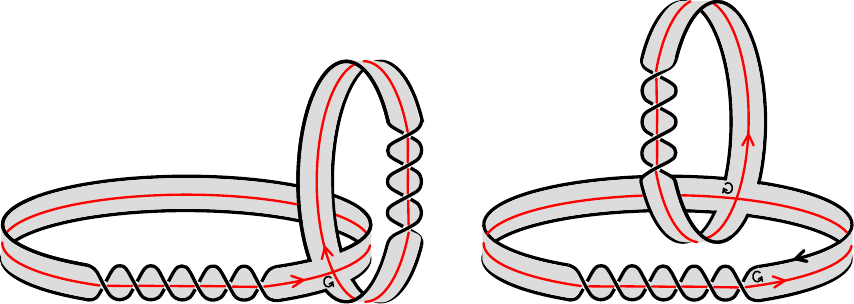' (pdf, eps, ps)
%%
%% To include the image in your LaTeX document, write
%%   \input{<filename>.pdf_tex}
%%  instead of
%%   \includegraphics{<filename>.pdf}
%% To scale the image, write
%%   \def\svgwidth{<desired width>}
%%   \input{<filename>.pdf_tex}
%%  instead of
%%   \includegraphics[width=<desired width>]{<filename>.pdf}
%%
%% Images with a different path to the parent latex file can
%% be accessed with the `import' package (which may need to be
%% installed) using
%%   \usepackage{import}
%% in the preamble, and then including the image with
%%   \import{<path to file>}{<filename>.pdf_tex}
%% Alternatively, one can specify
%%   \graphicspath{{<path to file>/}}
%% 
%% For more information, please see info/svg-inkscape on CTAN:
%%   http://tug.ctan.org/tex-archive/info/svg-inkscape
%%
\begingroup%
  \makeatletter%
  \providecommand\color[2][]{%
    \errmessage{(Inkscape) Color is used for the text in Inkscape, but the package 'color.sty' is not loaded}%
    \renewcommand\color[2][]{}%
  }%
  \providecommand\transparent[1]{%
    \errmessage{(Inkscape) Transparency is used (non-zero) for the text in Inkscape, but the package 'transparent.sty' is not loaded}%
    \renewcommand\transparent[1]{}%
  }%
  \providecommand\rotatebox[2]{#2}%
  \newcommand*\fsize{\dimexpr\f@size pt\relax}%
  \newcommand*\lineheight[1]{\fontsize{\fsize}{#1\fsize}\selectfont}%
  \ifx\svgwidth\undefined%
    \setlength{\unitlength}{409.74176842bp}%
    \ifx\svgscale\undefined%
      \relax%
    \else%
      \setlength{\unitlength}{\unitlength * \real{\svgscale}}%
    \fi%
  \else%
    \setlength{\unitlength}{\svgwidth}%
  \fi%
  \global\let\svgwidth\undefined%
  \global\let\svgscale\undefined%
  \makeatother%
  \begin{picture}(1,0.35550717)%
    \lineheight{1}%
    \setlength\tabcolsep{0pt}%
    \put(0,0){\includegraphics[width=\unitlength,page=1]{9_5.pdf}}%
    \put(0.05613407,0.06721552){\color[rgb]{0,0,0}\makebox(0,0)[lt]{\lineheight{1.25}\smash{\begin{tabular}[t]{l}$\textcolor{red}{\beta}$\end{tabular}}}}%
    \put(0.62353687,0.06754275){\color[rgb]{0,0,0}\makebox(0,0)[lt]{\lineheight{1.25}\smash{\begin{tabular}[t]{l}$\textcolor{red}{\beta}$\end{tabular}}}}%
    \put(0.90037745,0.23574154){\color[rgb]{0,0,0}\makebox(0,0)[lt]{\lineheight{1.25}\smash{\begin{tabular}[t]{l}$\textcolor{red}{\alpha}$\end{tabular}}}}%
    \put(0.4045473,0.18478802){\color[rgb]{0,0,0}\makebox(0,0)[lt]{\lineheight{1.25}\smash{\begin{tabular}[t]{l}$\textcolor{red}{\alpha}$\end{tabular}}}}%
    \put(0,0){\includegraphics[width=\unitlength,page=2]{9_5.pdf}}%
  \end{picture}%
\endgroup%

  \caption{The two Seifert surfaces (gray) for $K$. The simple closed curves $\alpha$ and $\beta$ (red) are such that $([\alpha], [\beta])$ forms a symplectic basis of the first homology of the surfaces.}\label{fig:9_5}
\end{figure}They are readily seen to have Seifert forms given by $\big(\begin{smallmatrix}
  2 & 1\\
  0 & 3
\end{smallmatrix}\big)$ and $\big(\begin{smallmatrix}
  2 & 0\\
  -1 & 3
\end{smallmatrix}\big)$ with respect the basis $[\alpha]$ and $[\beta]$ of first homology indicated in Figure~\ref{fig:9_5}. The Seifert surfaces are non-isotopic in $S^3$ since  $[2x^2+xy+3y^2]\neq [2x^2-xy+3y^2]\in \mathcal{G}_{-23}^+$. In fact, the surfaces become isotopic if the orientation on one of them is reversed, which is reflected in the fact that $[2x^2+xy+3y^2]=\overline{[2x^2-xy+3y^2]}$.
There are no other genus 1 Seifert surfaces up to isotopy by~\cite[Theorem~A]{Kakimizu_05}.

As a result, we have a knot $K$ with a genus 1 Seifert surface $F$, such that there does not exist a Seifert surface $F_2$ of genus 1 with $s_{F_2}=1\in \mathcal{G}_{-23}^+$, even though there are $a$ and $c$ such that $[ax^2+xy+cy^2]^2*s_1=s_2$, where $s_1=s_F=[3x^2+xy+2y^2]$ and $s_2=1$. 
\end{example}

\section{Examples}\label{sec:examples}

In this section we prove Corollary~\ref{corintro} and
give a variety of examples to illustrate the main results. The explicit calculations in the examples below can e.g.~be done using standard descriptions of Gauss composition~\cite{gauss}, \cite[Chapt.~14]{cassels}. 
We implemented and verified most of them using SageMath.
    
\subsection{Negative discriminants}\label{sec:negdisc}

We begin the discussion with proving Corollary~\ref{corintro}. 
The following is the main observation.

\begin{lem}\label{lem:quadnottriv-neg}
    Suppose that $D$ is a negative discriminant with $D \equiv 1 \mod 4$.
    Then $[ax^2+xy+cy^2]^2 = \id$ for all $a,c\in \Z$ with $D = 1-4ac$ if and only if $D$ is of the form $D = 1-4p^k$ for $ k \in \{0,1,2\}$ and $p$ a prime.
\end{lem}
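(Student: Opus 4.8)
The plan is to convert the condition into the classical reduction theory of positive definite binary quadratic forms. Write $n := \tfrac{-D+1}{4}\in\Z_{\geq 1}$, so that $D=1-4ac$ is equivalent to $ac=n$, and abbreviate $[a,b,c]:=[ax^2+bxy+cy^2]\in\mathcal{G}_D^+$. Since $D<0$ forces $ac=n>0$, every admissible pair $(a,c)$ is totally positive or totally negative. In the totally negative case $ax^2+xy+cy^2=-\overline{(|a|x^2+xy+|c|y^2)}$, and I would first reduce to positive coefficients: applying Lemma~\ref{lem:flipthingsaround} to the trivial composition $[\,|a|,1,|c|\,]=[1,1,n]*[\,|a|,1,|c|\,]$ gives $[-\overline{(|a|x^2+xy+|c|y^2)}]=\eta*[\,|a|,1,|c|\,]$, where $\eta=[-\overline{x^2+xy+ny^2}]$ generates the order-two kernel of the $2{:}1$ map $\mathcal{G}_D^+\to\mathcal{G}_D$ recalled in Section~\ref{sec:notation} and hence is $2$-torsion; squaring yields $[ax^2+xy+cy^2]^2=[\,|a|,1,|c|\,]^2$. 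Thus negative coefficients produce no new squares, and it suffices to treat $a,c>0$. Throughout I will use two classical facts for $D<0$ (e.g.\ \cite[Ch.~14]{cassels}): each $\SL_2(\Z)$-class of positive definite forms has a unique reduced representative, and $[q]^2=1$ iff $[q]=[\overline{q}]$, which one checks by comparing reduced representatives.

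For the ``if'' direction, suppose $n\in\{1,p,p^2\}$ and enumerate the at most three positive factorizations $n=ac$. Whenever $a=1$ or $c=1$ the form reduces to the principal form $[1,1,n]$ and is therefore $2$-torsion; and in the single remaining case $n=p^2$, $(a,c)=(p,p)$, the form $[p,1,p]$ is ambiguous ($a=c$) and hence also $2$-torsion. So every square is trivial.

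For the ``only if'' direction I would argue by contraposition, exhibiting a single bad factorization. The arithmetic crux is the elementary equivalence: $n\notin\{1,p,p^2\}$ if and only if the smallest prime $p\mid n$ satisfies $p^2<n$ (if $n$ has two distinct prime factors $p<q$ then $n\geq pq>p^2$; if $n=p^e$ then $p^2<n$ exactly when $e\geq 3$). For such $p$ the form $[p,1,n/p]$ satisfies $1<p<n/p$, so both $[p,1,n/p]$ and its inverse $[p,-1,n/p]$ are already reduced and are distinct; by uniqueness of reduced representatives $[p,1,n/p]\neq[p,1,n/p]^{-1}$, i.e.\ $[p,1,n/p]^2\neq 1$. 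This single non-trivial square defeats the condition, completing the contrapositive.

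Once the reduction theory is in hand the conceptual content is light. The one step I expect to require genuine care is the sign reduction to $a,c>0$, since it is the only place where the precise group structure of $\mathcal{G}_D^+$ for negative $D$ (and the $2$-torsion of $\eta$) enters, rather than just reduction of positive definite forms. The remainder is bookkeeping: tracking the boundary of the reduced-form inequalities so as to cleanly separate the ambiguous case $a=c$ (which occurs precisely for $n=p^2$) from the strictly unbalanced case $p^2<n$.
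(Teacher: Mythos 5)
Your proof is correct, but it follows a genuinely different route from the paper's. The paper first computes the square explicitly via Gauss composition, $[ax^2+xy+cy^2]^2=[a^2x^2+(1-2ac)xy+c^2y^2]$, and then decides triviality by a represented-values argument: since the principal form $x^2+xy+ac\,y^2$ primitively represents no integer in the interval $(1,ac)$, while the square class primitively represents $a^2$ and $c^2$, the square can only be trivial when $a=1$, $c=1$, or $a=c$. You instead never compute the square at all: you detect $[q]^2\neq 1$ by showing $[q]\neq[\overline{q}]=[q]^{-1}$, comparing the two reduced representatives $[p,\pm 1,n/p]$ with $1<p<n/p$ for $p$ the smallest prime factor of $n$, and you dispose of the cases $a=1$, $c=1$, $a=c$ by identifying them as principal or ambiguous (hence $2$-torsion) classes. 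Your sign reduction via the $2$-torsion element $\eta=[-\overline{x^2+xy+ny^2}]$ and Lemma~\ref{lem:flipthingsaround}(c) is a clean way to handle the negative-definite half of $\mathcal{G}_D^+$, a point the paper glosses over (its represented-values argument silently covers $a,c<0$ because the square class still has positive leading coefficient $a^2$). The trade-off: the paper's route yields the explicit square formula \eqref{eq:squareclass}, which is reused elsewhere (e.g.\ in Example~\ref{exp:Feher}), whereas your route is more self-contained on the reduction-theory side and makes the $2$-torsion structure of the exceptional cases transparent. Both arguments rest on the same elementary arithmetic fact that $n\notin\{1,p,p^2\}$ is equivalent to the existence of a factorization $n=ac$ with $1<a<c$.
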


\begin{proof}
We first claim that the square of any class $[ax^2+xy+cy^2]$ is given by
\begin{align}\label{eq:squareclass}
    [ax^2+xy+cy^2]^2 = [a^2x^2+(1-2ac)xy+c^2y^2].
\end{align}
This can be checked directly by Gauss composition.
Alternatively, and in the spirit of the current article, let us recall that the square class  $[ax^2+xy+cy^2]^2$ is exactly minus the class of the quadratic form on the oriented plane $\langle 1,A\rangle^\perp$ where $A = \SmMat{1}{-2a}{2c}{-1}$ (cf.~Corollary~\ref{cor:nonprimLperpLpperp}). 
An oriented (integer) basis of $\langle 1,A\rangle^\perp = L_{A,-A}$ is easily computed to be $\SmMat{a}{0}{1}{-a}$ and $\SmMat{-c}{1}{0}{c}$ which implies \eqref{eq:squareclass}.

We may assume $a,c>0$.
As the trivial class $x^2+xy+ac y^2$ primitively represents no integers less than $ac$ except for $1$, the square class \eqref{eq:squareclass} is non-trivial unless $a=1$, $c=1$, or $a=c$ (in which case it is trivial).
Turning to the statement of Lemma~\ref{lem:quadnottriv-neg}, $D$ may be written as $D = 1-4ac$ for $a,c>1$ with $a \neq c$ if and only if it is not of the form $D = 1-4p^k$ for $ k \in \{0,1,2\}$ and $p$ a prime. In this case exactly, we can find a non-trivial class as in \eqref{eq:squareclass} and hence the proposition follows.
\end{proof}

 \begin{proof}[Proof of Corollary~\ref{corintro}]
This follows directly from Lemma~\ref{lem:quadnottriv-neg} and Theorem~\ref{thm:main4d}.
 \end{proof}

In the following example we discuss the discriminant $D = -23$ in full detail. In particular, this connects to \cite{HMKPS22} and extends upon Example~\ref{exp:HMKPS1}.

\begin{example}\label{ex:-23}
The oriented class group for discriminant $-23$ has representatives
\begin{align*}
\mathcal{G}_D^+ = \{[x^2+xy+6y^2],[-x^2-xy-6y^2],[2x^2\pm xy+3y^2],[-2x^2\pm xy-3y^2]\}.
\end{align*}
As a group of order $6$, it is isomorphic to $\Z/2\Z \times \Z/3\Z$.
% We have $|\mathcal{S}^+_D|=3$ and the three classes are represented by $x^2+xy+6y^2$ (the trivial class) and $2x^2\pm xy +3y^2$, where we have $[2x^2\pm xy +3y^2]^2=[-(2x^2\pm xy +3y^2)]^2=[2x^2\mp xy +3y^2]$.
% MOVED PART OF EXAMPLE FROM INTRO HERE: DETAILS ON HOW TO MAKE IT FIT TO BE DONE HERE
% More precisely, we can explicitly give the two Seifert froms that arise. The six elements that feature in this equation constitute all  elements of the group $\mathcal{G}^+_{-23}$ and in fact of all  $\mathrm{Sl}_2$-equivalence classes of IBQFs since all IBQFs with square-free discriminant are primitive. 
Recall from Example~\ref{exp:HMKPS1} that by Theorem~\ref{thmintro:main3d}, two Seifert forms $s_1, s_2\in\mathcal{G}^+_{-23}$ arise from a disjoint genus 1 pair of Seifert surfaces if and only if
$[2x^2\pm xy +3y^2]*s_1=s_2$ or $s_1=s_2$.
%Since $[2x^2+xy +3y^2]^{-1}=[2x^2- xy +3y^2]$, we have that, up to exchanging $s_1$ and $s_2$, that $[2x^2 xy +3y^2]s_1=s_2$.
The unordered pairs of such $s_1$ and $s_2$ are, in addition to the six with $s_1=s_2$, the following:
\begin{align*}
\{1,[2x^2 + xy +3y^2]\},\{1,[2x^2 -xy +3y^2]\},
\{[-x^2-xy-6y^2],[-2x^2-xy -3y^2]\},\\
\{[-x^2-xy-6y^2],[-2x^2 +xy -3y^2]\},
\{[2x^2 +xy +3y^2],[2x^2 -xy +3y^2]\},\\\text{ and }\quad \{[-2x^2 -xy -3y^2],[-2x^2 +xy -3y^2]\}.
\end{align*}
For a pair of Seifert surfaces of the same knot that realize $s_1$ and $s_2$, we have by Lemma~\ref{lem:intersectionformofdoublebranchedcover} that the surfaces are not ambient isotopic in $B^4$ whenever $s_1^{-1}=\overline{s_1}\neq s_2 \neq s_1$. Of the above pairs with $s_1\neq s_2$, only the last two satisfy $s_1^{-1}=s_2$. Hence, we have that the other $4$ pairs arise from Seifert surfaces as asked for by Livingston. 
The example chosen in \cite[Proof of Theorem 4.1]{HMKPS22} is the fourth of the above pairs: $[-x^2-xy-6y^2]$ and $[-2x^2 +xy -3y^2]=[-6x^2-5xy-2y^2]$. The other three pairs are obtained from this one by taking mirror image (switches from negative definite to positive definite) and reversing orientation of the Seifert surfaces (corresponds to taking inverses).
\end{example}

Observe that in the above example a pair $s_1,s_2$ arises from a disjoint genus one pair of Seifert surfaces if and only if $s_1,s_2$ are S-equivalent i.e.~there exists $s \in \mathcal{S}_D^+$ with $s *s_1 = s_2$.
This is not generally true, as the example of $D=-71$ shows.

\begin{example}\label{exp:-71}
Consider $D = -71$ and note that $-71 = 1-4\cdot 2 \cdot 9$. 
The non-trivial classes of discriminant $-71$ of the form $[ax^2+xy+cy^2]^2$, $ac = 18$, are
$[3x^2\pm xy + 6y^2]^2 = [2x^2\mp xy+9y^2]$ and $[2x^2\pm xy + 9y^2]^2 = [4x^2\mp 3xy+5y^2]$.
The oriented class group $\mathcal{G}_{-71}^+$ is a group of order $14$ where the (non-oriented) class group $\mathcal{G}_D\cong \Z/7\Z$ is
\begin{align*}
\mathcal{G}_D = \{[x^2 + xy + 18y^2], [2x^2 \pm xy + 9y^2],[3x^2\pm xy+6y^2], [4x^2\pm 3xy+5y^2]\}.
\end{align*}
The cyclic group $\Z/7\Z$ is generated by any non-trivial element and hence $\mathcal{S}_{-71}^+ = (\mathcal{G}_{-71}^+)^2$.
On the other hand, it is clear that (for example) the classes $s_1= [x^2+xy+18y^2],s_2 = [3x^2+xy+6y^2]$ are not related by multiplication with a class of the form $[ax^2+xy+cy^2]^2$.
In particular, our results do not answer whether or not there exist a knot $K$ and two genus one Seifert surfaces $F_1,F_2$ with boundary $K$ such that $s_{F_1}= s_1$ and $s_{F_2} = s_2$.
However, Theorem~\ref{thmintro:main3d} shows that such $F_1,F_2$, if they exist, cannot be disjoint in their interior.
We also refer to Problem \ref{p:S-equi<->realization} for a general formulation.
\end{example}

\begin{example}\label{exp:Feher}
Feh\'er \cite{Feher} constructs an infinite set of examples of genus one disjoint pairs of Seifert surfaces and shows that they are not ambiently isotopic in the $4$-ball.
In the following, we discuss their work in the context of the current article.
Let $p,q>1$ be coprime integers, $n \geq 1$ (which will later be chosen large), and $k \in \Z$.
Choose $r,s \in \Z$ with $ps-qr=1$.
Then there exists a symplectic plane $L$ such that
\begin{align*}
[q_L] &= [pqx^2+(1-2kp)xy+ny^2],\\
[q_{L^\pperp}] &= [pqx^2+(2kp-1-2qr)xy+(rs-2kr+n)y^2].
\end{align*}
Feh\'er establishes this by drawing disjoint genus one pairs of Seifert surfaces realizing these forms.
In the context of this article, one may verify that the plane $L = L_{a_1,a_2}$ for
\begin{align*}
a_1 = \begin{pmatrix}
-1+2kp & 2q \\ -2np & 1-2kp
\end{pmatrix},\
a_2 = \begin{pmatrix}
1 & 2p \\ 2(k^2p-k-nq) & -1
\end{pmatrix}.
\end{align*}
gives rise to the desired forms.
% A basis for $L$ is given by
% \begin{align*}
% \begin{pmatrix}
% q & 0 \\ 1-kp & p
% \end{pmatrix},
% \begin{pmatrix}
% -k & -1 \\ n & 0
% \end{pmatrix}.
% \end{align*}
% A basis for $L^\perp$ is given by
% \begin{align*}
%     \begin{pmatrix}
%         q & 0 \\ -kp & -p
%     \end{pmatrix}, \begin{pmatrix}
%         s-k & 1 \\ -kr + n & -r
%     \end{pmatrix}.
% \end{align*}
% where we recall that $[q_{L^\pperp}] = - \overline{[q_{L^\perp}]}$.

Feh\'er \cite{Feher} verifies by direct elementary means that $[q_L] \neq [q_{L^\pperp}],[\overline{q_{L^\pperp}}]$ whenever $2kp\not\equiv 1 \mod q$ and
\begin{align*}%\label{eq:Fehercondition}
n \geq \frac{k(kp-1)}{q} + \frac{pq}{12}-\frac{1}{6}-\frac{1}{2pq}.
\end{align*} 
In particular, this shows that the corresponding disjoint genus one pairs of Seifert surfaces remain non-isotopic when pushed into the $4$-ball.
Corollary~\ref{cor:stabilizer} implies that $[q_L] \neq [q_{L^\pperp}],[\overline{q_{L^\pperp}}]$ if and only if 
\begin{align}\label{eq:ourFehercond}
[q_{a_1}] \neq [\overline{q_{a_1}}],\quad [q_{a_2}]^2 \not\in \mathrm{Stab}([q_{a_1}]).
\end{align}
As in Lemma~\ref{lem:quadnottriv-neg} one may verify that this is, for instance, the case if $2kp\not\equiv 1 \mod q$, $\gcd(1-2kp,q,n)=1$, and
\begin{align*}
n > \frac{k(kp-1)+p}{q}+ \frac{1}{4pq}.
\end{align*}
Moreover, for specific values of $p,q,k,n$ the conditions in \eqref{eq:ourFehercond} can be quickly verified using reduction theory.
\end{example}

\subsection{Positive discriminants}

Generally, determining class groups for positive discriminants is (much) harder than for negative discriminants (see e.g.~Gauss's class number problem).

\begin{example}
The discriminant $D = 145$ is the smallest positive squarefree discriminant for which there exists a non-trivial special class $[ax^2 \pm xy + c y^2]^2$ with $ac = -(D-1)/4 = -36$.
Such a non-trivial class is given by $[2x^2+xy-18y^2]^2 = [6x^2+5xy-5y^2]$.
We note that the oriented class group in this case is isomorphic to $\Z/4\Z$ with generator $[3x^2+7xy-8y^2]$.
\end{example}

\begin{example}
The discriminant $D = 905$ is a positive and squarefree discriminant for which there exist a class $s \in (\mathcal{G}_D^+)^2 = (\mathcal{G}_D)^2$ which is not of the form $[ax^2\pm xy+cy^2]^2$ for any $a,c$ with $1-4ac = D$.
In fact, a calculation verifies that $\mathcal{G}_D^+ \simeq \Z/8\Z$ so that $(\mathcal{G}_D^+)^2 \simeq \Z/4\Z$.
As $(905-1)/4 = 226 = 2\cdot 113$, there exist at most $3$ classes of the form $[ax^2 \pm xy + c y^2]^2$ which verifies the above claim.
This situation is analogous to Example~\ref{exp:-71}.
\end{example}

\subsection{Square discriminants}\label{sec:squaredisc}
\subsubsection{Reduction theory and Gauss composition for square discriminants}

Many treatments of the theory of binary quadratic forms exclude the case where the determinant $D$ is a square number.  
Although this case is in some regards arithmetically less interesting than the others, as observed in \cite{bhargava_composition}, Gauss composition still makes sense in this context.
In fact, as we will see below, the group $\mathcal{G}^+_{D}$ admits an explicit description when $D = N^2$:  it is isomorphic to the multiplicative group modulo $N$.

From a topological viewpoint, the case $D = N^2$ is exactly the case when the Alexander polynomial factors over the integers as
\begin{equation}\label{eq: AlexanderFactorization}
m t + (1-2m)   + mt^{-1} = (kt - (k+1))(kt^{-1} -(k+1)).
\end{equation}
where $D = 1-4m$ and $N = 2k+1 >0$.
These polynomials have a topological meaning -- they are exactly those which appear as Alexander polynomials of genus 1 slice knots.  A genus 1 knot with Alexander polynomial of the form \eqref{eq: AlexanderFactorization} does not have to be slice; however, it is algebraically slice, that is, there is a $1$-dimensional isotropic subspace for the Seifert form of any genus $1$ Seifert surface of $K$. The latter in particular follows from Proposition~\ref{prop:reductionsquare}.

For lack of (explicit) reference, we now write down the details of reduction theory and Gauss composition for the case of odd square discriminants $D = N^2$, $N = 2k+1 > 0$.

\begin{prop}[Reduction Theory for Square Discriminant]\label{prop:reductionsquare}
Any quadratic form of discriminant $D = N^2$ is $\SL_2(\Z)$-equivalent to a unique one of the forms
\[
Q_{N, a} = ax^2 + Nxy + 0 y^2.
\]
as $a$ runs through a complete set of representatives of the residue classes modulo $N$.
\end{prop}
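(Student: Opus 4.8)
The plan is to prove existence and uniqueness separately, with existence coming from the fact that a square discriminant makes the form isotropic, and uniqueness coming from a direct coefficient comparison whose only subtlety is orientation.

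\textbf{Existence.} First I would note that $\disc(Q_{N,a}) = N^2 - 4\cdot a\cdot 0 = N^2$, so the listed forms do have the right discriminant. Now let $q = \alpha x^2 + \beta xy + \gamma y^2$ have discriminant $N^2$. Since the discriminant is a perfect square, the roots $\frac{-\beta \pm N}{2\alpha}$ of $q(x,1)$ are rational (and if $\alpha = 0$ then $(1,0)$ is already a zero), so $q$ \emph{primitively represents $0$}: there is a primitive $v \in \Z^2$ with $q(v)=0$. Completing $v$ to an oriented basis, i.e.\ choosing $g \in \SL_2(\Z)$ with second column $v$, the form $q\circ g$ has vanishing $y^2$-coefficient, hence equals $\alpha' x^2 + \beta' xy$ with $\beta'^2 = N^2$, so $\beta' = \pm N$. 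If $\beta' = N$ we are done. If $\beta' = -N$, I would instead send the \emph{other} primitive zero direction of $q$ to $(0,1)$: since $N\neq 0$ the two zero directions are distinct, and a direct computation (choosing $g' = \big(\begin{smallmatrix} e & N_0\\ f & \alpha_0\end{smallmatrix}\big)$ with $v$ the second primitive zero and $\det g' = 1$) shows the resulting middle coefficient is $+N$. Thus $q$ is $\SL_2(\Z)$-equivalent to some $Q_{N,a}$.

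\textbf{Uniqueness.} For the well-definedness on residues, if $a' = a + Nt$ then $Q_{N,a}\circ \big(\begin{smallmatrix}1&0\\ t&1\end{smallmatrix}\big) = ax^2 + Nx(tx+y) = (a+Nt)x^2 + Nxy = Q_{N,a'}$, so the class depends only on $a \bmod N$. For injectivity, suppose $g = \big(\begin{smallmatrix} p & r\\ s & t\end{smallmatrix}\big) \in \SL_2(\Z)$ satisfies $Q_{N,a}\circ g = Q_{N,a'}$. Expanding and matching coefficients against $a'x^2 + Nxy + 0\cdot y^2$ gives the three equations
\begin{align*}
ar^2 + Nrt = 0,\qquad 2apr + N(pt+rs) = N,\qquad ap^2 + Nps = a'.
\end{align*}
The first forces $r = 0$ or $ar + Nt = 0$. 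In the case $r=0$, the determinant condition gives $pt = 1$, so $p^2 = 1$, and the last equation yields $a' = a + Nps \equiv a \pmod N$, as desired. In the remaining case $ar + Nt = 0$ with $r \neq 0$, substituting $Npt = -apr$ into the middle equation gives $xy$-coefficient $= apr + Nrs = r(ap + Ns)$, while $\det g = 1$ combined with $Nt = -ar$ forces $r(ap+Ns) = -N$; this contradicts the required value $+N$ since $N>0$. Hence only $r=0$ occurs and $a \equiv a' \pmod N$.

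\textbf{Main obstacle.} The delicate point is the role of the orientation, i.e.\ why the sign of the middle coefficient is a genuine $\SL_2(\Z)$-invariant and why it can always be normalized to $+N$. This is exactly where $D = N^2 > 0$ matters: reversing orientation (or, equivalently, interchanging the two rational linear factors of $q$) flips the sign of the middle coefficient, so an orientation-preserving map can never identify an ``$+N$'' form with a ``$-N$'' form — which is precisely what makes the forbidden ``swap'' case above contradictory. In the existence step the same phenomenon guarantees that the two zero directions produce opposite signs, so exactly one of them lands us in the desired shape $Q_{N,a}$. I expect the bookkeeping of these signs (rather than any deep argument) to be the only thing requiring genuine care.
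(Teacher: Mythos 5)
Your proof is correct, but it takes a somewhat different route from the paper's. The paper first reduces to primitive forms, factors $Q=(\alpha x+\gamma y)(\beta x+\delta y)$ over $\Z$ via Gauss's lemma, and works with the $2\times 2$ matrix of factor coefficients: the $\SL_2(\Z)$-action becomes left multiplication on that matrix, the labeling of the factors is chosen so its determinant is $+N$, and both existence (Hermite-type reduction to $\SmMat{1}{a}{0}{N}$, which needs primitivity of the column $(\alpha,\gamma)$ and hence the reduction to primitive forms) and uniqueness (the two ways the factor matrices can match, one of which forces $\det g=-1$) are read off from that matrix. You instead work with a primitive isotropic vector directly, which kills the $y^2$-coefficient without any primitivity reduction or appeal to Gauss's lemma, and you prove uniqueness by brute-force coefficient comparison; your case $r\neq 0$, $ar+Nt=0$ landing on middle coefficient $-N$ is exactly the paper's determinant~$-1$ case in different clothing, and your "orientation" discussion matches the paper's column-swap observation. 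The only soft spot is the "other zero direction" step in your existence argument: the matrix $g'=\SmMat{e}{N_0}{f}{\alpha_0}$ is not really specified (what are $N_0,\alpha_0$?), but the underlying claim is true and easy to verify --- writing $q=\ell_1\ell_2$ with $\det$ of the coefficient matrix equal to $N$, the polarization satisfies $B(u,v)=\ell_1(v)\ell_2(u)$ when $\ell_2(v)=0$, and the unimodularity of $(u\mid v)$ forces this to be $-N$ for one isotropic direction and $+N$ for the other. Net effect: your argument is marginally more economical (no primitivity reduction, no Gauss's lemma), while the paper's factor-matrix formalism is reused immediately afterwards to compute the composition law in Proposition~\ref{prop:Gausscompsquare}, which is why the authors set it up that way.
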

\begin{proof}
Since any quadratic form is a positive scalar multiple of a primitive form, it suffices to show this for primitive quadratic forms.

Let $Q$ be a primitive quadratic form of discriminant $D = N^2$. 
Since the discriminant is square, $Q$ factors over the rationals, and so also over the integers by Gauss's lemma.
Write $Q(x, y) = (\alpha x + \gamma y) (\beta x + \delta y)$ for integers $\alpha, \beta, \gamma, \delta$.
One verifies that for $g \in \SL_2(\Z)$ we have 
\begin{equation}\label{eq:SL2 action on factors}
g.Q(x,y) = (\alpha' x + \gamma' y)(\beta' x + \delta' y)
\end{equation}
where $\left(\begin{smallmatrix}\alpha' & \beta' \\ \gamma' & \delta'\end{smallmatrix} \right) = g \cdot \left(\begin{smallmatrix}\alpha & \beta \\ \gamma & \delta\end{smallmatrix} \right) $.
One can compute $N^2 = \disc Q = \det \left(\begin{smallmatrix}\alpha & \beta \\ \gamma & \delta\end{smallmatrix} \right)^2$. 
Swapping the columns of the matrix flips the sign of the determinant, so we can choose our labeling of the factors so that $\det \left(\begin{smallmatrix}\alpha & \beta \\ \gamma & \delta\end{smallmatrix} \right) = N$.

By Gauss's lemma, primitivity of $Q$ is equivalent to primitivity of the vectors $(\alpha, \gamma)$ and $(\beta, \delta)$.
Thus, we may choose a matrix $g \in \SL_2(\Z)$ such that $g \left(\begin{smallmatrix} \alpha  \\ \gamma\end{smallmatrix} \right) = \left(\begin{smallmatrix} 1  \\ 0\end{smallmatrix} \right)$.  
By the above
\begin{equation}
    g \begin{pmatrix}\alpha & \beta \\ \gamma & \delta\end{pmatrix} = \begin{pmatrix} 1 & a\\ 0 & N\end{pmatrix}
\end{equation}
It follows from \eqref{eq:SL2 action on factors} that 
\begin{equation}
g.Q(x, y) = x (a x + Ny) = ax^2 + Nxy + 0y^2.
\end{equation}
Note however that $g$ is only well-defined up to left multiplication by the stabilizer $\left(\begin{smallmatrix} 1 & k \\ 0 & 1 \end{smallmatrix}\right)$ of the vector $\left(\begin{smallmatrix} 1  \\ 0\end{smallmatrix} \right)$.  This action sends $ax^2 + Nxy + 0y^2$ to $(a + Nk) x^2 + Nxy + 0y^2$, so $a$ is only well-defined modulo $N$.
We can choose $g$ such that $a$ lands in a given set of representatives of the primitive residue classes modulo $N$.

To obtain uniqueness, let $0 \leq a,a' < N$ be representatives of primitive residue classes modulo $N$ and suppose that $g \in \SL_2(\Z)$ is such that $g.Q_{N,a} = Q_{N,a'}$. 
By \eqref{eq:SL2 action on factors} we have either
\begin{align*}
g \begin{pmatrix}1 & a \\ 0 & N\end{pmatrix} = \begin{pmatrix} 1 & a'\\ 0 & N\end{pmatrix}
\text{ or }
g \begin{pmatrix}1 & a \\ 0 & N\end{pmatrix} = \begin{pmatrix} a' & 1\\ N & 0\end{pmatrix}.
\end{align*}
In the former case, this implies $a=a'$. In the latter case, 
$g = \SmMat{a'}{(-aa'+1)/N}{N}{-a}$ which has determinant $-1$; hence this case does not occur.
This proves the lemma.\end{proof}

\begin{prop}[Gauss Composition for Square Discriminant]\label{prop:Gausscompsquare}
When $D = N^2$ is a square discriminant, the group of $\SL_2(\Z)$-equivalence classes of primitive binary quadratic forms under Gauss composition is canonically isomorphic to the multiplicative group modulo $N$ via the following map
\begin{equation}
\phi_N:  (\Z/N \Z)^\times \overset{\sim}{\longrightarrow} \mathcal{G}^+_{N^2}, \quad a \bmod N \mapsto [Q_{N, a}]=[ax^2 + N xy + 0 y^2]. 
\end{equation}
\end{prop}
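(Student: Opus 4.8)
The plan is to verify that $\phi_N$ is a well-defined bijection and then, as the main point, a group homomorphism; since a bijective homomorphism of groups is automatically an isomorphism (it carries the neutral element to the neutral element), nothing further is required.

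Well-definedness and bijectivity are immediate from Proposition~\ref{prop:reductionsquare}. First, $Q_{N,a} = ax^2 + Nxy$ has content $\gcd(a,N)$, so it is primitive exactly when $a \in (\Z/N\Z)^\times$; thus the stated domain is forced. Second, that same proposition shows that $[Q_{N,a}]$ depends only on $a \bmod N$ and that every primitive $\SL_2(\Z)$-class of discriminant $N^2$ equals $[Q_{N,a}]$ for a unique residue $a \bmod N$. Hence $\phi_N$ is a well-defined bijection onto $\mathcal{G}^+_{N^2}$.

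The substance is the identity $[Q_{N,a}] * [Q_{N,b}] = [Q_{N,ab}]$, which I would establish via Dirichlet composition (cf.~\cite[Ch.~14]{cassels}). The one preparatory step requiring care is that Dirichlet composition needs concordant representatives with coprime leading coefficients, whereas $a$ and $b$ need not be coprime. I would first replace $b$ by a representative of its residue class coprime to $a$: fixing $a$, for each prime $p \mid a$ one has $p \nmid b$ automatically when $p \mid N$ (as the class of $b$ is a unit modulo $N$), and when $p \nmid N$ one may impose $b \equiv 1 \pmod p$ together with $b \equiv a' \pmod N$ for the given residue $a'$ by the Chinese Remainder Theorem; carrying this out for all such $p$ yields $\gcd(a,b)=1$.

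With $\gcd(a,b) = 1$ fixed, applying $x \mapsto x + by$ to $Q_{N,a}$ and $x \mapsto x + ay$ to $Q_{N,b}$ (both transformations lie in $\SL_2(\Z)$, hence preserve the classes and the discriminant $N^2$) produces the concordant pair
\begin{align*}
f_1 = a\,x^2 + (N+2ab)\,xy + b\gamma\, y^2, \qquad f_2 = b\,x^2 + (N+2ab)\,xy + a\gamma\, y^2, \qquad \gamma = ab + N,
\end{align*}
whose Dirichlet composite is $ab\,x^2 + (N+2ab)\,xy + \gamma\, y^2$. A final change of variable $x \mapsto x - y$ reduces this to $ab\,x^2 + Nxy + 0\,y^2 = Q_{N,ab}$, establishing the homomorphism property. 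I expect no serious obstacle here: the only delicate point is the coprime-representative reduction just described, all remaining manipulations being routine determinant-one substitutions together with the concordant-composition formula; canonicity is clear since no auxiliary choices enter the definition of $\phi_N$.
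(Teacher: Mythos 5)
Your proof is correct and follows essentially the same route as the paper: bijectivity is deduced from Proposition~\ref{prop:reductionsquare}, and the homomorphism property from Dirichlet composition of concordant forms. The only difference is that the paper composes $Q_{N,a_1}$ and $Q_{N,a_2}$ directly as an already-concordant pair (their third coefficients vanish, so the divisibility conditions hold trivially and the relevant coprimality condition is $\gcd(a_1,a_2,N)=1$, which primitivity supplies), so your coprime-representative and shearing detour, while correct, is not needed.
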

\begin{proof}
    The map $\phi_N$ is well-defined because $[ax^2 + Nxy]$ is $\SL_2(\Z)$-equivalent to $[(a+kN)x^2 + Nxy]$ for all integers $k$.
    Proposition~\ref{prop:reductionsquare} then says precisely that $\phi_N$ is a bijection.  
    Applying the Dirichlet composition law applied to the two concordant forms $a_1 x^2 + Nxy + 0 y^2$ and $a_2 x^2 + Nxy + 0y^2$ (cf.~\cite[p.~335]{cassels}), we see that $\phi_N(a_1 \bmod N) \phi_N(a_2 \bmod N) = \phi_N(a_1 a_2 \bmod N)$, so $\phi_N$ is also a group homomorphism, hence an isomorphism.
\end{proof}

\subsubsection{Computations and examples}

In view of Proposition~\ref{prop:Gausscompsquare} it is remarkably simple to compute the set of special classes $[ax^2 + xy + cy^2]$ for square discriminants.

\begin{lem}\label{lem:squaredisc-specialclasses}
Suppose that $D = N^2$ is an odd square discriminant and write $N = 2k+1$, $k \geq 1$. Then 
\begin{align*}
\{[ax^2 + xy + cy^2]&: 1-4ac = D \} = \phi_N(\{ 2 \delta \beta \mod N : \delta, \beta \in \Z, \beta \mid k, \delta \mid k+1 \}).
\end{align*}
\end{lem}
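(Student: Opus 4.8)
The plan is to compute, for each special form $q = ax^2+xy+cy^2$ with $1-4ac = N^2$, its image under the isomorphism $\phi_N^{-1}\colon \mathcal{G}^+_{N^2} \to (\Z/N\Z)^\times$ of Proposition~\ref{prop:Gausscompsquare}, and then to match the resulting set of residues with $\{2\delta\beta \bmod N : \delta \mid k,\ \beta \mid k+1\}$. First I would record two elementary reformulations. Since the middle coefficient of $q$ is $1$, every special form is automatically primitive, so $[q] \in \mathcal{G}^+_{N^2}$ and $\phi_N^{-1}[q]$ is defined; and $1-4ac = N^2 = (2k+1)^2$ is equivalent to $ac = -k(k+1)$. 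The congruences $k+1 \equiv 2^{-1} \pmod N$ and $k \equiv -2^{-1}\pmod N$, immediate from $2(k+1) = N+1$ and $2k = N-1$, will be the arithmetic heart of the argument; note also that $2$, $k$ and $k+1$ are all units modulo $N$.

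Next I would factor. As in the proof of Proposition~\ref{prop:reductionsquare}, primitivity together with the square discriminant lets me write $q = (\alpha x + \gamma y)(\beta x + \delta y)$ over $\Z$ with primitive factors, and after fixing the labeling so that the coefficient matrix $M = \SmMat{\alpha}{\beta}{\gamma}{\delta}$ has $\det M = N$, the two conditions ``middle coefficient $=1$'' and ``$\det M = N$'' give $\alpha\delta = k+1$ and $\beta\gamma = -k$. The reduction step produces $g \in \SL_2(\Z)$ with $gM = \SmMat{1}{a'}{0}{N}$ and $[q] = \phi_N(a')$; reading this identity modulo $N$ shows that the second column of $M$ is $a'$ times the first, i.e.\ $\alpha a' \equiv \beta$ and $\gamma a' \equiv \delta \pmod N$. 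Since $\gcd(\alpha,\gamma)=1$ this determines $a' \bmod N$ uniquely, and because $\alpha$ is a unit modulo $N$ we obtain $a' \equiv \beta\alpha^{-1} \pmod N$. Substituting $\alpha^{-1} \equiv 2\delta$ (from $\alpha\delta = k+1 \equiv 2^{-1}$) yields $a' \equiv 2\beta\delta \pmod N$ with $\beta \mid k$ and $\delta \mid k+1$, which proves the inclusion $\subseteq$.

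For the reverse inclusion I would reverse this construction: given signed divisors $\delta_0 \mid k$ and $\beta_0 \mid k+1$, set $\alpha = (k+1)/\beta_0$, $\delta = \beta_0$, $\beta = \delta_0$, $\gamma = -k/\delta_0$ and form $q = (\alpha x + \gamma y)(\beta x + \delta y) = ax^2 + xy + cy^2$. The middle coefficient is $\alpha\delta + \beta\gamma = (k+1) + (-k) = 1$, and $ac = (\alpha\delta)(\beta\gamma) = -k(k+1)$, so $q$ is a genuine special form of discriminant $N^2$; by the formula just derived its image under $\phi_N^{-1}$ is $2\beta\delta = 2\delta_0\beta_0$, giving $\supseteq$. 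The main obstacle, and really the only place where care is needed, is the sign-and-labeling bookkeeping in the integer factorization together with the verification that the Hermite-type normalization $gM = \SmMat{1}{a'}{0}{N}$ genuinely computes $\phi_N^{-1}[q]$ as in Proposition~\ref{prop:reductionsquare}; once the relation $a' \equiv \beta\alpha^{-1} \pmod N$ is established, the congruence $k+1 \equiv 2^{-1}\pmod N$ makes both the factor of $2$ and the divisor description drop out immediately.
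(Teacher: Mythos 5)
Your proposal is correct and follows essentially the same route as the paper: both factor the special form over $\Z$ via Gauss's lemma, pin down $\alpha\delta=k+1$ and $\beta\gamma=-k$ from the middle coefficient and the determinant normalization, and conclude that the class is $\phi_N(2\delta\beta)$. The only cosmetic difference is that the paper extracts the residue by exhibiting the explicit matrix $\SmMat{\delta}{\beta}{-\gamma}{\alpha}\in\SL_2(\Z)$ and computing its action directly, whereas you read it off from the congruence $a'\equiv\beta\alpha^{-1}\equiv 2\beta\delta \pmod N$ coming from the normalization in Proposition~\ref{prop:reductionsquare}; your explicit treatment of the reverse inclusion is left implicit in the paper.
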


\begin{proof}
The quadratic form $ax^2 +xy + cy^2$ has discriminant $(2k+1)^2$ if and only if $ac = -k(k+1)$.
As above, by Gauss's lemma there exist integers $\alpha,\beta,\gamma,\delta$ such that 
\[
ax^2 + xy + cy^2 = (\alpha x + \gamma y) (\beta x + \delta y).
\]
Thus, $\alpha\beta =a$, $\gamma\delta=c$, and $\beta\gamma+\alpha\delta=1$. 
Therefore, $\alpha\beta\gamma\delta=-k(k+1)$ and, up to switching the factors, we can assume $\alpha\delta=k+1$ and $\beta\gamma=-k$.
Let $g$ be the matrix $\left( \smallmatrix \delta & \beta  \\ -\gamma &\alpha \endsmallmatrix \right)$; note that $g \in \SL_2(\Z)$.  Then
\begin{equation}
\begin{split}
g.(ax^2 + xy + cy^2) &= (\alpha (\delta x - \gamma  y) + \gamma (\beta x + \alpha y))
(\beta (\delta x - \gamma y) + \delta (\beta x + \alpha y)) \\
&= x (2 \delta \beta x + (2k+1) y) \\
&= Q_{N, 2 \delta \beta} 
\end{split}
\end{equation}
since $N = 2k+1$.
Conversely, for $\beta \mid k$ and $\delta\mid (k+1)$ the form $Q_{N, 2 \delta \beta}$ is equivalent to $\frac{k+1}{\delta}\beta x^2 + xy - \frac{k}{\beta}\delta y^2$.
\end{proof}

\begin{cor}\label{cor:nontrivclass-square}
If $N > 3$ is an odd number, then there exists a knot with determinant $N^2$ having two Seifert surfaces that are not ambiently isotopic in $B^4$.
\end{cor}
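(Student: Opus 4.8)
The plan is to reduce the topological assertion to the algebraic criterion already established for square discriminants. By the first part of Theorem~\ref{thmintro:main4d} (whose hypothesis is exactly the non-triviality of $\mathcal{S}^+_D$, cf.\ Theorem~\ref{thm:main4d}), it suffices to produce integers $a,c$ with $1-4ac = N^2$ such that $[ax^2+xy+cy^2]^2 \neq 1$ in $\mathcal{G}^+_{N^2}$. I will do this by exhibiting a single explicit special form and computing its square through the isomorphism $\phi_N$ of Proposition~\ref{prop:Gausscompsquare}, rather than analyzing the whole generating set.

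First I would write $N = 2k+1$ with $k \geq 2$ (since $N > 3$ is odd, $N \geq 5$) and observe that $1-4ac = N^2$ is equivalent to $ac = -k(k+1)$. Taking $a = k+1$ and $c = -k$ produces the special form $q = (k+1)x^2 + xy - ky^2$ of discriminant $N^2$, which factors over $\Z$ as $q = ((k+1)x - ky)(x+y)$. In the notation of the lemma immediately preceding the statement this is precisely the factorization $(\alpha x + \gamma y)(\beta x + \delta y)$ with $\alpha = k+1$, $\gamma = -k$, $\beta = \delta = 1$, so that the normalization $\alpha\delta = k+1$ and $\beta\gamma = -k$ holds. Consequently $[q] = \phi_N(2\delta\beta) = \phi_N(2)$.

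Since $\phi_N$ is a group isomorphism onto $\mathcal{G}^+_{N^2}$, we get $[q]^2 = \phi_N(2)^2 = \phi_N(4)$, and $\phi_N(4) = 1$ exactly when $4 \equiv 1 \pmod N$, i.e.\ when $N \mid 3$. As $N > 3$ is odd this is impossible, so $[q]^2 = \phi_N(4) \neq 1$ and $\mathcal{S}^+_{N^2}$ is non-trivial; Theorem~\ref{thmintro:main4d} then yields a knot of determinant $N^2$ carrying two Seifert surfaces that are non-isotopic in $B^4$ after being pushed in, as claimed. There is no serious obstacle beyond bookkeeping: the only points requiring a moment's care are that $2\delta\beta = 2$ is genuinely a unit modulo $N$ (which holds because $2$, $k$, and $k+1$ are each coprime to $N = 2k+1$), so that $\phi_N$ applies, and that the hypothesis $N > 3$ is sharp — for $N = 3$ one has $\phi_3(4) = \phi_3(1) = 1$, and indeed every special square is then trivial, so the conclusion genuinely fails.
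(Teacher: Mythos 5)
Your proof is correct and follows essentially the same route as the paper: the paper also takes $\beta=\delta=1$ to get $q=(k+1)x^2+xy-ky^2$, computes $[q]=\phi_N(2)$ via the preceding lemma, observes $[q]^2=\phi_N(4)\neq 1$ for $N>3$, and concludes by Theorem~\ref{thmintro:main4d}. Your additional remarks on why $2$ is a unit modulo $N$ and on the sharpness at $N=3$ are correct but not needed beyond what the paper records.
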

\begin{proof}
Take $\beta = \delta = 1$ in Lemma~\ref{lem:squaredisc-specialclasses} so that the class $[q] = \phi_N(2 \bmod N)$ is of the form $[ax^2+xy+cy^2]$ (as the proof of the lemma shows, that class is $[(k+1)x^2 + xy - k y^2]$).
Since $\phi_N$ is injective, the class $[q]^2 = \phi_N(4 \bmod N)$ is not the identity when $N > 3$.  
We then apply Theorem~\ref{thmintro:main4d} to obtain the desired result. 
\end{proof}
\section{Open Problems}\label{sec:openproblems}

\subsection{Purely number-theoretic problems}\label{subsec:problemsnumertheory}
As previously noted, analyzing the class group for positive discriminants is much harder than for negative discriminants (see e.g.~Gauss's class number problem).
This is also illustrated in the strength of Corollary~\ref{corintro} (see also Lemma~\ref{lem:quadnottriv-neg}).

\begin{problem}
Characterize the positive discriminants $D$ for which a class $[ax^2+xy+cy^2]$ for $a,c\in \Z$, $D = 1-4ac$, exists with $[ax^2+xy+cy^2]^2 \neq \id$.
\end{problem}

If $D$ is a square, this is addressed in Corollary~\ref{cor:nontrivclass-square}.
For fundamental discriminants (i.e. $D$ square-free and $D \equiv 1 \mod 4$), the following problem is a weakening of the above.

\begin{problem}
Show that there are infinitely many positive fundamental discriminants $D$, $D \equiv 1 \mod 4$, for which a class $[ax^2+xy+cy^2]$ with $a,c\in \Z$, $D = 1-4ac$, exists such that $[ax^2+xy+cy^2]^2 \neq \id$. 
If there are infinitely many such discriminants, establish or estimate the density of the set of such discriminants. 
\end{problem}

The problems addressed in this article are easily posed for higher genera as follows.
Let $U$ be the bilinear form on $\Z^{4g}$ given by $U(e_i,e_j) =1$ if $(i,j) \in \{(1,2),(3,4),\ldots\}$ and $0$ otherwise.
Let $\theta$ (resp.~$Q$) be the antisymmetrization (resp.~symmetrization) of $U$.
We call a rank $2g$ direct summand $L$ of $\Z^{4g}$ symplectic if there exists a basis $v_1,\ldots,v_{2g}$ of $L$ such that for $i<j$ we have $\theta(v_i,v_j)=1$ if $(i,j) \in \{(1,2),(3,4),\ldots\}$ and $0$ otherwise.

\begin{problem}\label{p:higherdim1} For $g>1$,
characterize the pairs $(q_1,q_2)$ of integral quadratic forms in $2g$ variables so that there exists a symplectic rank $2g$ direct summand $L$ of $\Z^{4g}$ such that 
$Q|_L$ is $\mathrm{Sp}_{2g}(\Z)$-equivalent to $q_1$ and $Q|_{L^\pperp}$ is $\mathrm{Sp}_{2g}(\Z)$-equivalent to $q_2$.
Here, as before $L^\pperp= \{v \in \Z^{4g}:\theta(v,w) = 0 \text{ for all }w \in L\}$.
\end{problem}

As phrased, we suspect Problem~\ref{p:higherdim1} to be difficult, though various weakenings of it appear amenable, for instance, to equidistribution methods as described in \S\ref{sec:equi}.
For instance, one can ask for which discriminants a pair $(q_1,q_2)$ exists.

\begin{example}
Pairs of forms $(q_1,q_2)$ as in Problem~\ref{p:higherdim1} may be constructed as follows: 
suppose that $(q_{1,i},q_{2,i})$ for $1\leq i \leq g$ are pairs of IBQFs so that $[a_ix^2+xy+c_iy^2]^2*[q_{1,i}]=[q_{2,i}]$ for some $a_i,c_i \in \Z$. By the present article, the pairs
$(q_{1,i},q_{2,i})$ may be realized as in Problem~\ref{p:higherdim1} for $g=1$.
It is then easy to see that one may realize the pair $(q_{1,1}\oplus \dots \oplus q_{1,g},q_{2,1}\oplus \dots \oplus q_{2,g})$.
For a concrete example, one can set
\begin{align}\label{eq:Hayden}
q_1 = \sum_{i=1}^g (x_i^2 + x_iy_i + 6y_i^2),\quad
q_2 = \sum_{i=1}^g (2x_i^2 + x_iy_i + 3y_i^2).
\end{align}
Observe that $q_1,q_2$ as in \eqref{eq:Hayden} are not in the same $\GL_{2g}(\Z)$-class because $q_2$ does not represent $1$ (see also Example~\ref{ex:-23}).
We remark that a prototypical pair as in Problem~\ref{p:higherdim1} should not be of the special form considered in this example.
\end{example}

\subsection{Topology questions}
Livingston's question~\cite{Livingston82} ``Do there exist knots with two Seifert surfaces of the same genus $g$ that remain non-isotopic when pushed into the $4$-ball?'' is resolved for all $g\geq 1$.
\begin{example}
The example of $q_1$ and $q_2$ in~\eqref{eq:Hayden} was pointed out to the authors by Kyle Hayden for its topological significance. Let $F_1$ and $F_2$ be any pair of genus $g$ Seifert surfaces for the same knot realizing $q_1$ and $q_2$ as their Seifert form. (Such exist by the $g\geq 1$ analogue of Lemma~\ref{lem:realizationofL} or, to be concrete, one may take $F_1$ and $F_2$ as the $g$-fold boundary connected sum of $G_1$ and the $g$-fold boundary connected sum of $G_2$ respectively, where $G_1$ and $G_2$ are disjoint genus 1 pairs that have $x^2 + xy + 6y^2$ and $2x^2 + xy + 3y^2$ as their Seifert forms.) Then $\Sigma(F_1^{\B^4})$ and $\Sigma(F_2^{\B^4})$ have non-isometric intersection forms and, in particular, the pushed-in surfaces $F_1^{\B^4}$ and $F_2^{\B^4}$ are not topologically ambiently isotopic.
\end{example}

We wonder whether more subtle versions of Livingston's question can also be answered affirmatively. 
For example, fixing $g\geq 1$ and a determinant $D\in \Z$, one may wonder, whether there exist genus $g$ Seifert surfaces $F_1$ and $F_2$ for the same knot with determinant $D$ such that 
 $F_1^{\B^4}$ and $F_2^{\B^4}$ are not topologically ambiently isotopic. We pose a more refined question. 
\begin{problem}\label{p:higherdim2}
Consider bilinear forms $U:\Z^{2g}\times \Z^{2g}\to \Z$ that antisymmetrize to a form of determinant $1$. 
For which such $U$ do there exist genus $g$ Seifert surfaces $F_1$ and $F_2$ for the same knot such that the Seifert form of $F_1$ is isometric to $U$ and the oriented $4$-manifolds $\Sigma(F_1^{\B^4})$ and $\Sigma(F_2^{\B^4})$ have non-isometric intersection form?
\end{problem}

Problem~\ref{p:higherdim2} with the additional requirement of $F_1$ and $F_2$ being disjoint in their interior is equivalent to asking which quadratic forms $q$ arise as one of the entries of a pair as in Problem~\ref{p:higherdim1}.

Let us return to genus $g=1$.
For which pairs $s_1$ and $s_2$ of (equivalence classes of) IBQFs of the same discriminant $D \equiv 1 \mod 4$ is there a knot $K$ with two genus one Seifert surfaces $\Sigma_1$, $\Sigma_2$ having Seifert forms $s_1$, $s_2$ respectively?  We have answered this question completely under the additional assumption that $\Sigma_1$ and $\Sigma_2$ are disjoint in Theorem~\ref{thmintro:main3d}.  
Without this restriction, we have the necessary condition that $\Sigma_1, \Sigma_2$ must be S-equivalent, but we do not know if this is sufficient. In the context of this paper, S-equivalence of binary quadratic forms is quickest described by saying that $s_1$ and $s_2$ are \emph{S-equivalent} if there exists an $s\in\mathcal{S}^+_D$ such that $s*s_1=s_2$. 
Hence, the relevant remaining question, which is equivalent to the conjecture stated in \S\ref{ss:Sequiv}, is the following.

\begin{problem}\label{p:S-equi<->realization}
Fix an integer $D$ such that $D\equiv 1 \pmod{4}$, and let $s\in\mathcal{S}^+_D$ and $s_1,s_2\in\mathcal{Q}^+_D$ be such that $s*s_1=s_2$.
Do there exist genus $1$ Seifert surfaces $F_1$ and $F_2$ with boundary the same knot such that $s_1=s_{F_1}$ and $s_2=s_{F_2}$?
\end{problem}

Given that in this text, we obstruct ambient isotopy of $F_1^{\B^4}$ and $F_2^{\B^4}$ in $\B^4$ using that $s_{F_1}\notin\{s_{F_2},\overline{s_{F_2}}\}$ are different, the following is a natural problem to consider. 
\begin{problem}\label{p:distinguishbeyondSF}
Do there exist genus $1$ Seifert surfaces $F_1$ and $F_2$ with boundary the same knot such that $s_{F_1}=s_{F_2}$ and $F_1^{\B^4}$ and $F_2^{\B^4}$ are not topologically ambiently isotopic in $\B^4$?
\end{problem}

\bibliographystyle{amsalpha}
\bibliography{Bibliography}

@article{BlomerBrumleyKhayutin,
      title={The mixing conjecture under {GRH}}, 
      author={Valentin Blomer and Farrell Brumley and Ilya Khayutin},
    journal={arXiv preprint 2212.06280},
      year={2022},
      eprint={2212.06280},
      archivePrefix={arXiv},
      primaryClass={math.NT},
      url={https://arxiv.org/abs/2212.06280}, 
}

@article {BlomerBrumley,
    AUTHOR = {Blomer, Valentin and Brumley, Farrell},
     TITLE = {Simultaneous equidistribution of toric periods and fractional
              moments of {$L$}-functions},
   JOURNAL = {J. Eur. Math. Soc. (JEMS)},
  FJOURNAL = {Journal of the European Mathematical Society (JEMS)},
    VOLUME = {26},
      YEAR = {2024},
    NUMBER = {8},
     PAGES = {2745--2796},
      ISSN = {1435-9855,1435-9863},
   MRCLASS = {11F67 (11E45 11R52)},
  MRNUMBER = {4756945},
MRREVIEWER = {Andre\ Reznikov},
       DOI = {10.4171/jems/1324},
       URL = {https://doi.org/10.4171/jems/1324},
}

@incollection{EllenbergMichelVenkatesh,
    AUTHOR = {Ellenberg, Jordan S. and Michel, Philippe and Venkatesh,
              Akshay},
     TITLE = {Linnik's ergodic method and the distribution of integer points
              on spheres},
 BOOKTITLE = {Automorphic representations and {$L$}-functions},
    SERIES = {Tata Inst. Fundam. Res. Stud. Math.},
    VOLUME = {22},
     PAGES = {119--185},
 PUBLISHER = {Tata Inst. Fund. Res., Mumbai},
      YEAR = {2013},
      ISBN = {978-93-80250-49-6},
   MRCLASS = {11K36 (11E12)},
  MRNUMBER = {3156852},
MRREVIEWER = {Vladimir\ S.\ Anashin},
}

@book{FriedlNagelOrsonPowell,
  author = {Stefan Friedl and Matthias Nagel and Patrick Orson and Mark Powell},
  title = {The foundations of four-manifold theory in the topological category},
  series = {NYJM Monographs},
  volume = {6},
  publisher = {New York Journal of Mathematics},
  year = {2025},
  month = {March},
  note = {Published March 19, 2025},
  keywords = {4-manifold theory, algebraic topology},
  msc2020 = {57K40, 57N35, 57N40, 57N65, 57N75}
}

@incollection {MV-ICM,
    AUTHOR = {Michel, Philippe and Venkatesh, Akshay},
     TITLE = {Equidistribution, {$L$}-functions and ergodic theory: on some
              problems of {Y}u. {L}innik},
 BOOKTITLE = {International {C}ongress of {M}athematicians. {V}ol. {II}},
     PAGES = {421--457},
 PUBLISHER = {Eur. Math. Soc., Z\"{u}rich},
      YEAR = {2006},
      ISBN = {978-3-03719-022-7},
   MRCLASS = {11F66 (11F67 11M41 37A45)},
  MRNUMBER = {2275604},
MRREVIEWER = {Gergely\ Harcos},
}

@article{Feher,
      title={{Non-isotopic Seifert surfaces in the $4$-ball}}, 
      author={Zsombor Fehér},
      year={2023},
      journal={arXiv preprint 2304.12113}
}

@article {ChintaPereira,
    AUTHOR = {Chinta, Gautam and J\'unior, Valdir Pereira},
     TITLE = {Planes in {$\Bbb Z^4$} and {E}isenstein series},
   JOURNAL = {J. Lond. Math. Soc. (2)},
  FJOURNAL = {Journal of the London Mathematical Society. Second Series},
    VOLUME = {108},
      YEAR = {2023},
    NUMBER = {6},
     PAGES = {2415--2435},
      ISSN = {0024-6107,1469-7750},
   MRCLASS = {11F68 (11F27 11F67)},
  MRNUMBER = {4673432},
       DOI = {10.1112/jlms.12810},
       URL = {https://doi.org/10.1112/jlms.12810},
}

@Article{HMKPS22,
  Title                    = {Seifert surfaces in the 4-ball},
  Author                   = {Hayden, K. and Kim, S. and Miller, M. and Park, J. and Sundberg, I.},
  Journal                  = {J. Eur. Math. Soc., online first},
  Year					   = {2025}
}

@article {Livingston82,
    AUTHOR = {Livingston, Charles},
     TITLE = {Surfaces bounding the unlink},
   JOURNAL = {Michigan Math. J.},
  FJOURNAL = {Michigan Mathematical Journal},
    VOLUME = {29},
      YEAR = {1982},
    NUMBER = {3},
     PAGES = {289--298},
      ISSN = {0026-2285,1945-2365},
   MRCLASS = {57Q37 (57M12 57M25)},
  MRNUMBER = {674282},
MRREVIEWER = {C.\ Kearton},
       URL = {http://projecteuclid.org/euclid.mmj/1029002727},
}

@article {IlyaAnnals,
    AUTHOR = {Khayutin, Ilya},
     TITLE = {Joint equidistribution of {CM} points},
   JOURNAL = {Ann. of Math. (2)},
  FJOURNAL = {Annals of Mathematics. Second Series},
    VOLUME = {189},
      YEAR = {2019},
    NUMBER = {1},
     PAGES = {145--276},
      ISSN = {0003-486X},
   MRCLASS = {11G18 (37A17)},
  MRNUMBER = {3898709},
MRREVIEWER = {Thomas Ward},
       DOI = {10.4007/annals.2019.189.1.4},
       URL = {https://doi.org/10.4007/annals.2019.189.1.4},
}

@article{AEW-2in4,
author = {Menny Aka and Manfred Einsiedler and Andreas Wieser},
title = {{Planes in four-space and four associated CM points}},
volume = {171},
journal = {Duke Mathematical Journal},
number = {7},
publisher = {Duke University Press},
pages = {1469 -- 1529},
keywords = {CM points, equidistribution, ergodic theory, glue group, number theory, planes in four-space},
year = {2022},
doi = {10.1215/00127094-2021-0040},
URL = {https://doi.org/10.1215/00127094-2021-0040}
}

@Book{cassels,
  Title                    = {Rational quadratic forms},
  Author                   = {J.W.S. Cassels},
  Publisher                = {Academic Press Inc.},
  Year                     = {1978},
  Series                   = {London Mathematical Society Monographs},
  Volume                   = {13}
}

@article {duke88,
    AUTHOR = {Duke, W.},
     TITLE = {Hyperbolic distribution problems and half-integral weight
              {M}aass forms},
   JOURNAL = {Invent. Math.},
  FJOURNAL = {Inventiones Mathematicae},
    VOLUME = {92},
      YEAR = {1988},
    NUMBER = {1},
     PAGES = {73--90},
      ISSN = {0020-9910,1432-1297},
   MRCLASS = {11F11 (11E32 11F30 11F37)},
  MRNUMBER = {931205},
MRREVIEWER = {Mark\ Sheingorn},
       DOI = {10.1007/BF01393993},
       URL = {https://doi.org/10.1007/BF01393993},
}

@article {EL-joiningsPIHES,
    AUTHOR = {Einsiedler, Manfred and Lindenstrauss, Elon},
     TITLE = {Joinings of higher rank torus actions on homogeneous spaces},
   JOURNAL = {Publ. Math. Inst. Hautes \'Etudes Sci.},
  FJOURNAL = {Publications Math\'ematiques. Institut de Hautes \'Etudes
              Scientifiques},
    VOLUME = {129},
      YEAR = {2019},
     PAGES = {83--127},
      ISSN = {0073-8301,1618-1913},
   MRCLASS = {22E40 (22D40 37A05)},
  MRNUMBER = {3949028},
MRREVIEWER = {Thomas\ Ward},
       DOI = {10.1007/s10240-019-00103-y},
       URL = {https://doi.org/10.1007/s10240-019-00103-y},
}

@article {ScharlemannThompson_88,
    AUTHOR = {Scharlemann, Martin and Thompson, Abigail},
     TITLE = {Finding disjoint {S}eifert surfaces},
   JOURNAL = {Bull. London Math. Soc.},
  FJOURNAL = {The Bulletin of the London Mathematical Society},
    VOLUME = {20},
      YEAR = {1988},
    NUMBER = {1},
     PAGES = {61--64},
      ISSN = {0024-6093,1469-2120},
   MRCLASS = {57M25},
  MRNUMBER = {916076},
MRREVIEWER = {Allan\ Edmonds},
       DOI = {10.1112/blms/20.1.61},
       URL = {https://doi.org/10.1112/blms/20.1.61},
}

@Book{gauss,
  Title                    = {Disquisitiones Arithmeticae},
  Author                   = {C. F. Gauss},
  Publisher                = {Springer-Verlag},
  Year                     = {1986},
  Note                     = {Translated and with a preface by Arthur A. Clarke, Revised by William C. Waterhouse, Cornelius Greither and A.W.Grootendorst and with a preface by Waterhouse}
}

@article {bhargava_composition,
    AUTHOR = {Bhargava, Manjul},
     TITLE = {Higher composition laws. {I}. {A} new view on {G}auss
              composition, and quadratic generalizations},
   JOURNAL = {Ann. of Math. (2)},
  FJOURNAL = {Annals of Mathematics. Second Series},
    VOLUME = {159},
      YEAR = {2004},
    NUMBER = {1},
     PAGES = {217--250},
      ISSN = {0003-486X},
   MRCLASS = {11E76 (11E16 11R11 11R29)},
  MRNUMBER = {2051392},
MRREVIEWER = {A. G. Earnest},
       DOI = {10.4007/annals.2004.159.217},
       URL = {https://doi.org/10.4007/annals.2004.159.217},
}

@article {pall-nonprimitive,
    AUTHOR = {Pall, Gordon},
     TITLE = {Composition of binary quadratic forms},
   JOURNAL = {Bull. Amer. Math. Soc.},
  FJOURNAL = {Bulletin of the American Mathematical Society},
    VOLUME = {54},
      YEAR = {1948},
     PAGES = {1171--1175},
      ISSN = {0002-9904},
   MRCLASS = {10.0X},
  MRNUMBER = {28357},
MRREVIEWER = {H.\ S. A. Potter},
       DOI = {10.1090/S0002-9904-1948-09144-3},
       URL = {https://doi.org/10.1090/S0002-9904-1948-09144-3},
}

@article {butts-estes,
    AUTHOR = {Butts, Hubert S. and Estes, Dennis},
     TITLE = {Modules and binary quadratic forms over integral domains},
   JOURNAL = {Linear Algebra Appl.},
  FJOURNAL = {Linear Algebra and its Applications},
    VOLUME = {1},
      YEAR = {1968},
     PAGES = {153--180},
      ISSN = {0024-3795,1873-1856},
   MRCLASS = {15.70},
  MRNUMBER = {236205},
MRREVIEWER = {L.\ M.\ Chawla},
       DOI = {10.1016/0024-3795(68)90001-3},
       URL = {https://doi.org/10.1016/0024-3795(68)90001-3},
}

@incollection {penner_geomofgauss,
    AUTHOR = {Penner, R. C.},
     TITLE = {The geometry of the {G}auss product},
      NOTE = {Algebraic geometry, 4},
   JOURNAL = {J. Math. Sci.},
  FJOURNAL = {Journal of Mathematical Sciences},
    VOLUME = {81},
      YEAR = {1996},
    NUMBER = {3},
     PAGES = {2700--2718},
      ISSN = {1072-3374},
   MRCLASS = {11E16},
  MRNUMBER = {1420224},
MRREVIEWER = {Detlev\ W.\ Hoffmann},
       DOI = {10.1007/BF02362336},
       URL = {https://doi.org/10.1007/BF02362336},
}

@article {GordonLitherland_78_OnTheSigOfALink,
    AUTHOR = {Gordon, Cameron McA. and Litherland, Richard A.},
     TITLE = {On the signature of a link},
   JOURNAL = {Invent. Math.},
  FJOURNAL = {Inventiones Mathematicae},
    VOLUME = {47},
      YEAR = {1978},
    NUMBER = {1},
     PAGES = {53--69},
      ISSN = {0020-9910},
   MRCLASS = {55A25},
  MRNUMBER = {0500905 (58 \#18407)},
MRREVIEWER = {Wilbur Whitten},
}

@book {FarbMargalit_12_APrimerOnMCG,
    AUTHOR = {Farb, Benson and Margalit, Dan},
     TITLE = {A primer on mapping class groups},
    SERIES = {Princeton Mathematical Series},
    VOLUME = {49},
 PUBLISHER = {Princeton University Press, Princeton, NJ},
      YEAR = {2012},
     PAGES = {xiv+472},
      ISBN = {978-0-691-14794-9},
   MRCLASS = {57M50 (20F36 20F65 57M07 57N05)},
  MRNUMBER = {2850125 (2012h:57032)},
MRREVIEWER = {Stephen P. Humphries},
}

@article {Miller_22,
    AUTHOR = {Miller, Alison Beth},
     TITLE = {Asymptotics for the number of simple {$(4a+1)$}-knots of genus
              1},
   JOURNAL = {Int. Math. Res. Not. IMRN},
  FJOURNAL = {International Mathematics Research Notices. IMRN},
      YEAR = {2022},
    NUMBER = {9},
     PAGES = {6742--6769},
      ISSN = {1073-7928,1687-0247},
   MRCLASS = {11N45 (11H55 11N36 11R11 11R65 57K10)},
  MRNUMBER = {4411468},
MRREVIEWER = {Kevin\ J.\ McGown},
       DOI = {10.1093/imrn/rnaa315},
       URL = {https://doi.org/10.1093/imrn/rnaa315},
}

@article {Kakimizu_05,
    AUTHOR = {Kakimizu, Osamu},
     TITLE = {Classification of the incompressible spanning surfaces for
              prime knots of 10 or less crossings},
   JOURNAL = {Hiroshima Math. J.},
  FJOURNAL = {Hiroshima Mathematical Journal},
    VOLUME = {35},
      YEAR = {2005},
    NUMBER = {1},
     PAGES = {47--92},
      ISSN = {0018-2079},
   MRCLASS = {57M25 (57N10)},
  MRNUMBER = {2131376},
MRREVIEWER = {Dale\ P. O. Rolfsen},
       URL = {http://projecteuclid.org/euclid.hmj/1150922486},
}

@book {Lickorish_97,
    AUTHOR = {Lickorish, W. B. Raymond},
     TITLE = {An introduction to knot theory},
    SERIES = {Graduate Texts in Mathematics},
    VOLUME = {175},
 PUBLISHER = {Springer-Verlag, New York},
      YEAR = {1997},
     PAGES = {x+201},
      ISBN = {0-387-98254-X},
   MRCLASS = {57M25 (57N10)},
  MRNUMBER = {1472978},
MRREVIEWER = {Darryl McCullough},
       DOI = {10.1007/978-1-4612-0691-0},
       URL = {http://dx.doi.org/10.1007/978-1-4612-0691-0},
}

@book {bandes-thesis,
    AUTHOR = {Bandes, Dean},
     TITLE = {S-EQUIVALENCE AND MODULES OF GENUS ONE KNOTS},
      NOTE = {Thesis (Ph.D.)--Brandeis University},
 PUBLISHER = {ProQuest LLC, Ann Arbor, MI},
      YEAR = {1974},
     PAGES = {140},
   MRCLASS = {Thesis},
  MRNUMBER = {2624072},
       URL =
              {http://gateway.proquest.com/openurl?url_ver=Z39.88-2004&rft_val_fmt=info:ofi/fmt:kev:mtx:dissertation&res_dat=xri:pqdiss&rft_dat=xri:pqdiss:7416806},
}

\end{document}